%% file: center-action-fs.tex
\pdfoutput=1
\documentclass[10pt]{amsart}
\input{sty-center-action-fs.tex}
\DeclareRobustCommand{\gobblefive}[5]{}
\newcommand*{\SkipTocEntry}{\addtocontents{toc}{\gobblefive}}%
\makeatletter
\def\subsection{\@startsection{subsection}{2}%
  \z@{.5\linespacing\@plus.7\linespacing}{.3\linespacing}%
  {\normalfont\bfseries}}
\makeatother
\usepackage{fullpage}
\usepackage[normalem]{ulem}
\author{Konrad Zou}
\title{On the action of the center in the \texorpdfstring{\(\ell\)}{l}-adic categorical Langlands program}
\begin{document}	
	\begin{abstract}
		We discuss the action of the center in the categorical local Langlands program in the form of \cite{geometrization}.
        We provide a spectral description of the convolution action of a central torus in this situation.
        We show that the veracity of the \(\ell\)-adic categorical Langlands conjecture passes to the quotient by a central torus \(D\) satisfying a minor cohomological condition in characteristic 0 and we show that under restriction to Langlands-Shahidi type parameters the conjecture passes to such a quotient in all characterstics and even integrally.
        In that situation we also show that the splitting of the semi-orthogonal decomposition descends.
        Finally we use these results to extend \cite{zou2025categoricallocallanglandsmathrmgln} to \(\PGL_n\).
	\end{abstract}
	\maketitle
	\tableofcontents
    \include{introduction}
    \include{spectral-preprations}
    \include{geometric-preparations}
    \include{spectral-description-shriek-convolution}
    \include{general-compatibility-results}
    \include{compatibility-langlands-shahidi}
    \include{application-pgln}
	\printbibliography%
\end{document}

%% file: sty-center-action-fs.tex
\usepackage{comment}
\usepackage{amsmath,amssymb,amsthm,mathtools}
\usepackage{tikz-cd}
\usepackage{tikz}
\usepackage[utf8]{inputenc}
\usepackage[british]{babel}
\usepackage{lmodern}
\usepackage[T1]{fontenc}
\usepackage[babel=true]{microtype}
\usepackage[mathscr]{euscript}
\usepackage{hyperref}
\usepackage{zref-clever}
\zcsetup{S,nameinlink=false}
\newcommand{\cref}[1]{\zcref{#1}}
\newcommand{\patchzref}[3]{\AddToHook{env/#1/begin}{\zcsetup{countertype={#2=#3}}}}
\newcommand{\patchzrefthmcounter}[2]{\patchzref{#1}{theorem}{#2}}
\zcRefTypeSetup{conjecture}{
Name-sg = Conjecture ,
name-sg = conjecture ,
Name-pl = Conjectures ,
name-pl = conjectures ,
}
\AddToHook{env/theorem/begin}{\zcsetup{countertype={theorem=theorem}}}
\AddToHook{env/lemma/begin}{\zcsetup{countertype={theorem=lemma}}}
\patchzrefthmcounter{corollary}{corollary}
\patchzrefthmcounter{proposition}{proposition}
\patchzrefthmcounter{remark}{remark}
\patchzrefthmcounter{conjecture}{conjecture}
\usepackage[backend=biber, bibencoding=utf8,style=alphabetic,maxnames=99,maxalphanames=99]{biblatex}
\usepackage{relsize}

\DefineBibliographyStrings{english}{%
	bibliography = {References},
}

\binoppenalty=\maxdimen
\relpenalty=\maxdimen

\theoremstyle{plain}
\newtheorem{theorem}{Theorem}[section]
\newtheorem{lemma}[theorem]{Lemma}
\newtheorem{corollary}[theorem]{Corollary}
\newtheorem{proposition}[theorem]{Proposition}

\newtheorem{conjecture}[theorem]{Conjecture}

\theoremstyle{definition}
\newtheorem{definition}[theorem]{Definition}

\newtheorem{remark}[theorem]{Remark}

\theoremstyle{plain} %
\newcommand{\thistheoremname}{}
\newtheorem{genericthm}[theorem]{\thistheoremname}

\theoremstyle{remark}

\DeclareMathOperator{\Hom}{Hom}

\newcommand*{\from}{\colon}
\newcommand*{\defword}[1]{\emph{#1}}

\newcommand*{\defined}{\coloneqq}

\mathchardef\mhyphen="2D
\newcommand*{\xto}[1]{\xrightarrow{#1}}
\DeclareMathOperator*{\colim}{colim}

\DeclareMathOperator{\id}{id}

\DeclareMathOperator{\pr}{pr}

\makeatletter
\def\mydefbb#1{\expandafter\def\csname bb#1\endcsname{\mathbb{#1}}}
\def\mydefallbb#1{\ifx#1\mydefallbb\else\mydefbb#1\expandafter\mydefallbb\fi}
\mydefallbb ABCDEFGHIJKLMNOPQRSTUVWXYZ\mydefallbb
\makeatother
\newcommand*{\injto}{\hookrightarrow}

\newcommand*{\Mod}{\mathrm{Mod}}
\DeclareMathOperator{\Spa}{Spa}

\DeclareMathOperator{\Spec}{Spec}

\DeclareMathOperator{\Fun}{Fun}

\DeclareMathOperator{\An}{Ani}

\DeclareMathOperator{\Rep}{Rep}

\DeclareMathOperator{\Bun}{Bun}

\newcommand*{\Hck}{\mathrm{Hck}}

\newcommand*{\Div}{\mathrm{Div}}
\DeclareMathOperator{\Spd}{\mathrm{Spd}}

\newcommand*{\oo}{\mathcal{O}}

\newcommand*{\localhck}{\mathcal{H}\mathrm{ck}}

\newcommand{\locsys}[2]{Z^1(#1,#2)/#2}
\newcommand{\locsyscoarse}[2]{Z^1(#1,#2)/\!\!/#2}
\newcommand{\locsyssub}[3]{Z^1_{#3}(#1,#2)/#2}
\newcommand{\locsyscoarsesub}[3]{Z^1_{#3}(#1,#2)/\!\!/#2}
\DeclareMathOperator{\cind}{c-ind}
\DeclareMathOperator{\Perf}{Perf}
\DeclareMathOperator{\QCoh}{QCoh}
\DeclareMathOperator{\Coh}{Coh}
\newcommand*{\nilp}{{\mathrm{Nilp}}}
\DeclareMathOperator{\Sing}{Sing}

\newcommand{\op}{{\mathrm{op}}}

\DeclareMathOperator{\End}{End}

\DeclareMathOperator{\D}{\mathcal{D}}

\newcommand{\qc}{\mathrm{qc}}
\DeclareMathOperator{\Exc}{Exc}

\DeclareMathOperator{\ind}{Ind}

\newcommand{\ula}{{\mathrm{ULA}}}

\usepackage{scalerel, stackengine}
\stackMath
\newcommand\widecheck[1]{%
\savestack{\tmpbox}{\stretchto{%
  \scaleto{%
    \scalerel*[\widthof{\ensuremath{#1}}]{\kern-.6pt\bigwedge\kern-.6pt}%
    {\rule[-\textheight/2]{1ex}{\textheight}}%
  }{\textheight}%
}{0.5ex}}%
\stackon[1pt]{#1}{\scalebox{-1}{\tmpbox}}%
}

\newcommand{\ad}{{\mathrm{ad}}}

\DeclareMathOperator{\CAlg}{CAlg}

\newcommand{\irred}{{\mathrm{irred}}}

\newcommand{\GL}{\mathrm{GL}}

\newcommand{\alg}{{\mathrm{alg}}}

\DeclareMathOperator{\Lie}{Lie}

\DeclareMathOperator*{\Ind}{Ind}

\newcommand{\LSt}{{\mathrm{LSt}}}
\newcommand{\wLSt}{{\mathrm{wLSt}}}
\DeclareMathOperator{\Gest}{Gest}
\DeclareMathOperator{\Stk}{Stk}
\DeclareMathOperator{\vStk}{vStk}
\DeclareMathOperator{\PGL}{PGL}
\DeclareMathOperator{\SL}{SL}
\DeclareMathOperator{\const}{const}
\DeclareMathOperator{\IndCoh}{ICoh}
\newcommand{\temp}{{\mathrm{temp}}}

%% file: introduction.tex
\section{Introduction}
Let \(E\) be a non-archimedian local field and \(G\) be a connnected reductive group over \(E\).
For notational simplicity in the introduction, let us also assume that \(G\) is split.
The local Langlands program over a non-archimedian field \(E\) aims to give a finite to one map from the set of irreducible representations of \(G(E)\) in \(\overline{\bbQ_\ell}\), towards the set of continuous homomorphisms \(W_E\to\check{G}(\overline{\bbQ_\ell})\), where \(W_E\) is the Weil group of \(E\) and \(\check{G}\) is the Langlands dual group of \(G\).

The easiest situation is when \(G=\bbG_m\) is commutative, in this case \(\check{G}=\bbG_m\) as well.
It follows that any continuous homomorphism \(W_E\to\check{G}(\overline{\bbQ_\ell})\) factors over the abelianization map \(W_E\to W_E^{\mathrm{ab}}\).
By local class field theory, \(W_E^{\mathrm{ab}}=E^\times\).
Therefore, on the one hand, an irreducible continuous representation of \(E^\times\) is nothing but a \(\overline{\bbQ_\ell}\)-valued character of \(E^\times\), which by local class field theory is also nothing else than a continuous group homomorphism \(W_E\to\overline{\bbQ}_\ell^\times\).
This easily generalized to a product of \(\bbG_m\).

Assume we have some local Langlands correspondence for a group \(G\), and we fix a central torus \(D\subset G\).
Then by Schur's lemma, an irreducible representation \(\pi\) of \(G\) gives rise to a character \(\chi\) of \(D(E)\).
On the dual side, we have a parameter \(\varphi_\pi\from W_E\to\check{G}(\overline{\bbQ_\ell})\).
We may post-compose this with the map \(\check{G}\to\check{D}\) to obtain a map \(\check{\chi}\from W_E\to\check{D}(\overline{\bbQ_\ell})\).
Compatibility with central characters means that \(\chi\) matches up with \(\check{\chi}\) under the local Langlands correspondence for tori.

Recently, in \cite{geometrization}, a semi-simplified version of the local Langlands correspondence has been achieved, and it is compatible with central characters in the above sense, this is \cite[Theorem IX.0.5.(iii)]{geometrization}.
In fact, they formulate a categorical conjecture, and in this setting one can say more.

Instead of considering the set of irreducible representations of \(G(E)\), we consider the category of sheaves on \(\D(\Bun_G)\).
Here \(\Bun_G\) is a stack parametrizing \(G\)-bundles on the Fargues-Fontaine curve.
Instead of the set of continuous group homomorphisms \(W_E\to\check{G}(\overline{\bbQ_\ell})\), we observe that these naturally define a moduli space \(\locsys{W_E}{\check{G}}\) and we consider coherent sheaves on it.
Instead of a finite to one map, here we really want an equivalence of categories.

The easiest case is \(G=\bbG_m\) again, where this was proven in \cite{categorical-fargues-tori}.
Let now \(D\) be a torus as above.
Then we have an action of \(\Bun_D\) on \(\Bun_G\), which takes a \(D\)-bundle \(\mathcal{E}\) and a \(G\)-bundle \(\mathcal{F}\) and maps it to the \(G\)-bundle \(\mathcal{E}\boxtimes\mathcal{F}\times^{D\times G}G\), where the map \(D\times G \to G\) is the multiplication map, which is a group homomorphism as we assumed \(D\) to be central.
This induces an action of \(\D(\Bun_D)\) on \(\D(\Bun_G)\).
Explicitly, writing \(a\from\Bun_D\times\Bun_G\to\Bun_G\) for the action map, consider the diagram
\begin{equation*}
    \begin{tikzcd}
       & \Bun_D\times\Bun_G \arrow[rd, "a"] \arrow[r, "\pr_1"] \arrow[ld, "\pr_2"'] & \Bun_D \\
\Bun_G &                                                                            & \Bun_G
\end{tikzcd}
\end{equation*}
Then the action of some \(A\in\D(\Bun_D)\) on some \(M\in\D(\Bun_G)\) is given by 
\begin{equation*}
    A\star M\defined a_!(\pr_1^*A\otimes\pr_2^*M).
\end{equation*}
The unit of this action is given by \(\mathcal{W}_D\defined e_!\overline{\bbQ_\ell}\), where \(e\from *\to\Bun_D\) selects the trivial \(D\)-bundle.
We can equip \(\D(\Bun_D)\) with the convolution monoidal structure \(-\star-\), so that \(\D(\Bun_G)\) becomes a module over \(\D(\Bun_D)\).
Explicitly, there is a multiplication map \(m\from\Bun_D\times\Bun_D\to\Bun_D\) since \(D\) is commutative, then the monoidal structure is given by \(A\star B\defined m_!(A\boxtimes B)\) and the unit is \(e_!\overline{\bbQ_\ell}\).

On the dual side, we have a morphism of stacks \(\locsys{W_E}{\check{G}}\to\locsys{W_E}{\check{D}}\), this induces an action of \(\Ind\Perf^\qc(\locsys{W_E}{\check{D}})\) on \(\Ind\Coh^\qc(\locsys{W_E}{\check{G}})\).

Again one expects this to match up somehow, and the one of the aims of this article is to discuss how, in the case when \(D\) satisfies \(H^1(\Gamma,\check{D})=0\), also allowing for modular and integral coefficients.
The first important result is the following.
\begin{theorem}[{\cref{lem: cartier dual BunT QCoh(ParT)}}]
    Assume \(H^1(\Gamma,\check{D})=0\). There is an equivalence of symmetric monoidal categories between \((\D(\Bun_D),\star)\) and \(\QCoh(\locsys{W_E}{\check{D}})\).
\end{theorem}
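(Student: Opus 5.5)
The strategy is a Cartier/Fourier duality argument, with the two sides matched factor by factor. On the automorphic side, \(\Bun_D\) for a torus decomposes as \(\Bun_D\cong\coprod_{b\in B(D)}[*/\underline{D(E)}]\), where \(B(D)\cong X_*(D)_\Gamma\) by Kottwitz. Convolution is induced by the group structure of \(\Bun_D\cong[\pi_0/\underline{D(E)}]\) with \(\pi_0=X_*(D)_\Gamma\). So \((\D(\Bun_D),\star)\) should be \(\D(\underline{D(E)}\text{-rep})\otimes\Mod(\Lambda[X_*(D)_\Gamma])\) with the convolution (group-ring) monoidal structure, i.e. smooth representations of the locally profinite abelian group \(D(E)\times X_*(D)_\Gamma\) under convolution. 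More precisely, \(\D([*/\underline{D(E)}])\simeq\D(D(E),\Lambda)\). Convolution on \(\Bun_D\) restricted to the neutral component is \(!\)-pushforward along multiplication \([*/D(E)]^2\to[*/D(E)]\), which I expect to be the usual tensor product of representations, or convolution, after the Fourier transform. I would first make this identification precise, including the shift and twist conventions for \(e_!\).

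On the spectral side, \(\check{D}\) is a torus with \(\Gamma\)-action. I would decompose the parameter stack as \(\locsys{W_E}{\check{D}}=Z^1(W_E,\check{D})\times[*/\check{D}^\Gamma]\), since the conjugation action of \(\check{D}\) is through coboundaries, with stabilizer \(\check{D}^{\Gamma}\). The condition \(H^1(\Gamma,\check{D})=0\) enters here. It should guarantee that the coboundary map \(\check{D}\to Z^1\) has image a direct factor and that \(\check{D}^{\Gamma}\) is a diagonalizable group with character group \(X^*(\check{D})_\Gamma=X_*(D)_\Gamma\). Then \(\QCoh(*/\check{D}^\Gamma)\simeq\Mod\) graded by \(X_*(D)_\Gamma\), with tensor product corresponding to convolution of gradings. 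This matches the \(\pi_0(\Bun_D)\)-factor.

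The remaining factor is \(Z^1(W_E,\check{D})/\check{D}\), coarsely, which by local class field theory for tori (Langlands' correspondence for tori, in the version of \cite{categorical-fargues-tori}) is the moduli of characters of \(D(E)\). This is exactly the Bernstein-center-type spectrum of \(\Lambda[D(E)]\). The smooth representation category \(\D(D(E),\Lambda)\) is, since \(D(E)\) is abelian, modules over the Hecke algebra \(\colim_K\Lambda[D(E)/K]\). That is QCoh of the ind-scheme or scheme of characters, which is identified with \(Z^1(W_E,\check{D})/\check{D}\) via class field theory \(W_E^{\mathrm{ab}}\) and Tate–Nakayama-type duality. Under this identification, convolution of representations becomes the tensor product of quasi-coherent sheaves. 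I would essentially import \cite{categorical-fargues-tori}, which proves this for tori, and check that the hypothesis there is what \(H^1(\Gamma,\check{D})=0\) buys us.

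The main obstacle is symmetric monoidality rather than the equivalence of underlying categories. One must check that the \(!\)-convolution on \(\Bun_D\), with its inherent duality or Haar-measure normalizations from \(\D([*/D(E)])\), matches \(\otimes\) on \(\QCoh\) coherently, not just on objects. It is also delicate to verify that the component group and the neutral part combine into a split product without extension data. For the latter, I would use that \(H^1(\Gamma,\check{D})=0\) forces the sequence \(1\to\check{D}^\Gamma\to\check{D}\to\) coboundaries to split compatibly, or equivalently that \(\Bun_D\) splits as a group stack as \(X_*(D)_\Gamma\times[*/D(E)]\). For coherence, I would realize both sides as modules over commutative group algebras, via Cartier duality of commutative group stacks, where the monoidal structure is automatic.
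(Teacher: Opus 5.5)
This is essentially the paper's route. The paper splits \(\Bun_D\cong X_*(D)_\Gamma\times[*/D(E)]\) and \(\locsys{W_E}{\check D}\cong\Hom(D(E),\bbG_m)\times B\check D^\Gamma\). It then matches the discrete factor with \(\QCoh(B\check D^\Gamma)\), and \(\D([*/D(E)])=\lim_K\D([*/(D(E)/K)])\) levelwise with \(\QCoh(\Hom(D(E)/K,\bbG_m))\), both by Cartier duality for finitely generated abelian groups. Coherence is handled as you suggest: \(\D\) on anima is identified with \(\QCoh\) of the associated constant stacks, and Cartier duality for the \(\QCoh\) six-functor formalism then gives the symmetric monoidal equivalence for free.

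Several statements in your write-up need correcting.

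First, in your first paragraph the discrete factor is on the wrong side of Cartier duality. Sheaves on the discrete set \(X_*(D)_\Gamma\) are graded modules with convolution of gradings, i.e.\ \(\QCoh(B\check D^\Gamma)\). Modules over \(\Lambda[X_*(D)_\Gamma]\), i.e.\ representations of \(X_*(D)_\Gamma\), would instead give \(\QCoh(\check D^\Gamma)\). Your second paragraph has it right.

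Second, the spectral factor is the coarse quotient \(Z^1(W_E,\check D)/\!\!/\check D\cong\Hom(D(E),\bbG_m)\), not \(Z^1(W_E,\check D)\) itself.

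Third, on the neutral component, \(m_!\) along \([*/D(E)]^2\to[*/D(E)]\) is derived smooth coinvariants for the antidiagonal copy of \(D(E)\); at level \(K\) this is \(V\otimes_{\Lambda[D(E)/K]}W\). That is what corresponds to \(\otimes_{\oo}\). The diagonal tensor product of representations corresponds instead to convolution of characters.

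Finally, diagonalizability of \(\check D^\Gamma\) with \(X^*(\check D^\Gamma)=X_*(D)_\Gamma\) holds for every torus, so the hypothesis does not enter there; in the body of the paper it is stated as \(H^1(\Gamma,D)=0\). By Tate--Nakayama it makes \(X_*(D)_\Gamma\) torsion-free, hence free. The paper uses this in two ways:
\begin{itemize}
\item to retract \(\check D\to\check D^\Gamma\), which splits the gerbe;
\item to split \(\Bun_D\) compatibly with its group law, since \(B(D)\) is then free.
\end{itemize}
It is on the automorphic side that the splitting must respect the group structure, since \(\star\) sees it while \(\otimes\) on \(\QCoh\) does not. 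Your alternative spectral argument, that the coboundaries \(B^1\cong\check D/\check D^\Gamma\) form a direct factor of \(Z^1\), is valid. It in fact needs no hypothesis, since \(X^*(B^1)\subset X^*(\check D)\) is free.
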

This is extremely explicit, one can deduce this result in general by resolving \(D\) by induced tori, which satisfy the cohomological condition by Hilbert 90, however this generality is not needed here.
Having this result at hand, there are now two different actions of \(\QCoh(\locsys{W_E}{\check{D}})\) on \(\D(\Bun_G)\).
One is via the equivalence \(\QCoh(\locsys{W_E}{\check{D}})\simeq(\D(\Bun_D),\star)\) and using the convolution action.
Another way is to observe that we have a symmetric monoidal functor \(\QCoh(\locsys{W_E}{\check{D}})\to\Ind\Perf(\locsys{W_E}{\check{G}})\), via the composite
\begin{equation*}
    \QCoh(\locsys{W_E}{\check{D}})=\Ind\Perf^\qc(\locsys{W_E}{\check{D}})\to\Ind\Perf(\locsys{W_E}{\check{D}})\to\Ind\Perf(\locsys{W_E}{\check{G}})
\end{equation*}
where the last map is pulling back along \(\locsys{W_E}{\check{G}}\to\locsys{W_E}{\check{D}}\).
It turns out that these are the same.
\begin{theorem}[\cref{thm: geometric convolution action is spectral action}]
    Assume that \(D\) is an satisfies \(H^1(\Gamma,D)=0\). 
    The two actions of \(\QCoh(\locsys{W_E}{\check{D}})\) on \(\D(\Bun_G)\) agree.
\end{theorem}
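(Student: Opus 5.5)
The plan is to reduce the comparison of the two actions to a comparison on generators, and then to the Hecke-eigensheaf property of Whittaker-type objects on \(\Bun_D\). Both actions are colimit-preserving, \(\QCoh(\locsys{W_E}{\check{D}})\)-linear in the first variable and compatible with the monoidal structures. So it suffices to produce a natural equivalence of monoidal functors
\[
\QCoh(\locsys{W_E}{\check{D}})\to\End(\D(\Bun_G)).
\]
By \cref{lem: cartier dual BunT QCoh(ParT)}, \(\QCoh(\locsys{W_E}{\check{D}})\) is generated under colimits by objects coming from the spectral action of \(\Rep(\check{D})\)-type vector bundles, together with the structure sheaf. On the geometric side these correspond to the sheaves \(e_{\nu!}\overline{\bbQ_\ell}\)-type objects supported on the components \(\Bun_D^{\nu}\), \(\nu\in\pi_1(D)_\Gamma\), twisted by representations of \(D(E)\).

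The key step is to express the convolution action of \(\D(\Bun_D)\) on \(\D(\Bun_G)\) via Hecke operators. For a cocharacter \(\mu\) of \(D\) (equivalently a character of \(\check{D}\)), the Hecke operator \(T_\mu\) on \(\D(\Bun_G)\) is modification along the central cocharacter. The Hecke stack for a central cocharacter is simply the graph of translation by the \(D\)-bundle \(\mathcal{O}(\mu)\) (placed at a point of \(\Div^1\)), so \(T_\mu\) is convolution with the skyscraper at \(\mathcal{O}(\mu)\in\Bun_D\) pushed forward along \(\Div^1\). On the spectral side, \(T_\mu\) acts through the spectral action by the vector bundle \(V_\mu\) on \(\locsys{W_E}{\check{G}}\), pulled back from \(\locsys{W_E}{\check{D}}\), carrying its \(W_E\)-action. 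This identifies the two actions on the images of \(\Rep(\check{D})\) compatibly with \(W_E\)-equivariance. Next I would handle the degree-zero part, namely the action of \(D(E)\) on each \(\D(\Bun_G^b)\) via \(D(E)\subset G_b(E)\) central. This corresponds to the action of the \(\bigl(\overline{\bbQ_\ell}[D(E)]\)-module \(\bigr)\) part, matched with functions on \(\locsys{W_E}{\check{D}}\) through \cite[Theorem IX.0.5.(iii)]{geometrization}, i.e. compatibility of excursion operators with central characters.

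The hypothesis \(H^1(\Gamma,\check{D})=0\) enters through \cref{lem: cartier dual BunT QCoh(ParT)}: it makes \(\locsys{W_E}{\check{D}}\) the Cartier dual of \(\Bun_D\), so that the equivalence is built exactly from these two pieces of data (excursion operators on \(D(E)\) and Hecke translations on \(\pi_0\)).

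The main obstacle is coherence. Agreement on objects and on generating morphisms is not enough; one needs an equivalence of \(E_\infty\)/monoidal actions. I would try to construct it universally by working on \(\Bun_D\) itself: the action of \(\D(\Bun_D)\) on \(\D(\Bun_G)\) is pulled back from the module structure of \(\D(\Bun_D)\) over itself, via the \(\Bun_D\)-equivariance of \(\Bun_G\). Likewise, the spectral action on \(\D(\Bun_G)\) restricted along \(\check{G}\to\check{D}\) should be compatible with the spectral action on \(\D(\Bun_D)\). The latter is built from the same Hecke operators via \cite{geometrization}'s construction, namely the Hecke-compatibility of the action map \(a\). This reduces the claim to \(G=D\), where it is essentially the content of the construction of \cref{lem: cartier dual BunT QCoh(ParT)} (as in \cite{categorical-fargues-tori}). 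The remaining technical point is checking that \(a_!\) intertwines Hecke operators for \(\check{G}\to\check{D}\) functorially in finite sets \(I\), as required to compare actions of \(\Perf(\locsys{W_E}{\check{D}})\) via \cite{geometrization}'s \(\Rep(\check{G}^I)\) description.
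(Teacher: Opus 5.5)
Your two comparisons on generators are the right ones and match the paper. Identifying \(T_{z^*\chi}\) with \((\oo(\chi)*\mathcal{W}_D)\star-\), via the graph-like Hecke stack for a central character and the constancy of \(S_{z^*\chi}\), functorially in \(\chi\), \(I\) and \(W_E\), is \cref{lem: compatible on line bundles}. The degree-zero part is \cref{cor: compatible on coarse moduli}. There you should compare the two maps \(\Exc(D)\to\End(\id_{\D(\Bun_G)})\), which follows from the Hecke comparison because excursion operators are built from Hecke operators. The pointwise statement \cite[Theorem IX.0.5.(iii)]{geometrization} only concerns irreducible objects, so it does not suffice.

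The gap is in the coherence step, and your reduction to \(G=D\) does not remove it. The equivalence of \cref{lem: cartier dual BunT QCoh(ParT)} is built by abstract Cartier duality, matching \([*/D(E)]\times X_*(D)_\Gamma\) against \(\Hom(D(E),\bbG_m)\times B\check{D}^\Gamma\), with no reference to Hecke operators. So the claim that convolution on \(\D(\Bun_D)\), transported through it, is the spectral action is not part of that construction. It is the theorem itself for \(G=D\), with the same coherence problem, including the need to identify \(-*\mathcal{W}_D\) with that equivalence as monoidal functors. Moreover, checking that \(a_!\) intertwines Hecke operators functorially in \(I\) gives only objectwise isomorphisms \(a_!\circ T_{\chi\boxtimes 1}\cong T_{z^*\chi}\circ a_!\), as in \cref{prop: commutativity different spectral actions}. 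It does not give a coherent compatibility of module structures, so the reduction step already presupposes the coherence you wanted to avoid.

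The missing idea is the structural input that makes coherence automatic under \(H^1(\Gamma,D)=0\). By \cref{lem: splitting gerbe} the gerbe splits, \(\locsys{W_E}{\check{D}}\cong\Hom(D(E),\bbG_m)\times B\check{D}^\Gamma\), with \(X^*(\check{D}^\Gamma)\) free. By \cref{lem: 2QCoh(BD)}, \(\Mod_{\QCoh(B\check{D}^\Gamma)}(\Pr)\simeq\Fun(BX^*(\check{D}^\Gamma),\Pr)\). Hence, as in \cref{rem: enough to give incoherent actions}, an action of \(\QCoh(\locsys{W_E}{\check{D}})\) on \(\D(\Bun_G)\) amounts to two pieces of data:
\begin{itemize}
\item a linear structure over the coarse part \(\Hom(D(E),\bbG_m)\);
\item an action of the lattice \(X^*(\check{D}^\Gamma)\) by acting with the line bundles \(\oo(\chi)\) pulled back from \(B\check{D}\).
\end{itemize}
Two such actions agree once these data agree, and that is exactly what your two generator comparisons provide. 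No reduction to \(G=D\) and no separate coherence argument is then needed. This is also where the hypothesis really enters. It does more than make \(\locsys{W_E}{\check{D}}\) Cartier dual to \(\Bun_D\): it splits both sides as products (\(\Bun_D\cong[*/D(E)]\times X_*(D)_\Gamma\), and the gerbe above), so the action decomposes into these two independent pieces.
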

For such tori, this is easy as one can treat certain line bundles on \(\locsys{W_E}{\check{D}}\) and the action of \(R\Gamma(\locsys{W_E}{\check{D}},\oo)\) separately from each other. %

In this case, one can wonder what compatibility results we can obtain for the general categorical conjecture.
Let us fix a quasi-split reductive group \(G\) over \(E\), a central torus \(D\subset G\) and set \(G'\defined G/D\).
Let \(\Gamma\defined\mathrm{Gal}(\overline{E}/E)\).
Let us first recall the full categorical conjecture.
\begin{conjecture}[{\cite[Conjecture X.3.5.]{geometrization}}]\label{conj: categorical conjecture}
    Assume that \(\ell\nmid\lvert\pi_0(Z(G))\rvert\).
    Fix a Whittaker datum \((U,\psi)\), giving rise to a sheaf \(\mathcal{W}\defined i_{1!}\cind_{U(E)}^{G(E)}\psi\).
    Consider the functor
    \begin{equation*}
        -*\mathcal{W}\from\Ind\Perf^\qc(\locsys{W_E}{\check{G}})\to\D(\Bun_G)
    \end{equation*}
    given by spectral action on \(\mathcal{W}\).
    Then we have:
    \begin{enumerate}
        \item The right adjoint \(c_{\mathcal{W}}\) of \(-*\mathcal{W}\) is fully faithful on \(\D(\Bun_G)^\omega\).
        \item The essential image of \(\D(\Bun_G)^\omega\) under \(c_{\mathcal{W}}\) is given by \(\Coh_{\nilp}(\locsys{W_E}{\check{G}})\).
    \end{enumerate}
\end{conjecture}
\begin{remark}
    The statement of the conjecture depends on a Whittaker datum \((U,\psi)\).
    Of course conjugate Whittaker data give equivalent conjectures, however in general not all Whittaker data are conjugate.
    In particular, a priori it could be possible that the conjecture holds for one Whittaker datum but not for another.\footnote{Of course we do not expect this to be the case.}
    There are two ways of resolving this issue.
    One is via \cite[Conjecture 5.2.1.]{hansen-mann}.
    Another is upcoming work of Naoki Imai and Laurent Fargues which will formulate variants of the \(\ell\)-adic categorical conjecture without appealing to Whittaker data.
    In particular if any of these two conjectures holds, then \cref{conj: langlands shahidi type conjecture} holds for all Whittaker data.
\end{remark}
\begin{remark}
    Assume \(\ell\in\Lambda^\times\), then the formulation of the conjecture simplifies on the following ways.
    First, we have an equivalence of presentably symmetric monoidal categories \(\Ind\Perf^\qc(\locsys{W_E}{\check{G}})\simeq\QCoh(\locsys{W_E}{\check{G}})\).
    Second, the condition \(\nilp\) becomes automatic, so that \(\Coh_\nilp(\locsys{W_E}{\check{G}})=\Coh(\locsys{W_E}{\check{G}})\).
\end{remark}
We get the following compatibility.
\begin{theorem}[{\cref{thm: compatiblity full categorical conjecture}}]
    Assume that \(D\) satisfies \(H^1(\Gamma,D)=0\) and all primes that are not banal for \(D\) are invertible in \(\Lambda\).
    Assume that \cref{conj: categorical conjecture}(1) holds for \(G\) for a Whittaker datum \((U,\psi)\).
    This induces a Whittaker datum for \(G'\) since there is a canonical bijection between Borel subgroups of \(G\) and \(G'\). \cref{conj: categorical conjecture}(2) holds for \(G'\) for this induced Whittaker datum.
    If \(\ell\in\Lambda^\times\), then \cref{conj: categorical conjecture}(2) for \(G\) implies \cref{conj: categorical conjecture} for \(G'\).
\end{theorem}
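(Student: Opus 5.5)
The plan is to realise both sides for \(G'=G/D\) as base changes of the corresponding sides for \(G\) along the augmentation of \(\QCoh(\locsys{W_E}{\check{D}})\simeq(\D(\Bun_D),\star)\) given by \cref{lem: cartier dual BunT QCoh(ParT)}. Then \cref{thm: geometric convolution action is spectral action} makes the functor \(-*\mathcal{W}\) linear over this common symmetric monoidal category, and we base change the conjecture along it. I read the statement as saying that part (1) for \(G\) gives part (1) for \(G'\), and that, when \(\ell\in\Lambda^\times\), part (2) for \(G\) moreover gives the full conjecture for \(G'\).

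\emph{Spectral side.} Since \(D\subset G\) is a central torus, \(\check{G}'\) is the kernel of \(\check{G}\to\check{D}\). Hence \(\locsys{W_E}{\check{G}'}\) is the fibre of \(\locsys{W_E}{\check{G}}\to\locsys{W_E}{\check{D}}\) over the trivial parameter, up to the \(\check{D}^\Gamma\)-gerbe, and the condition \(H^1(\Gamma,\check{D})=0\) is used to control this fibre. By Lurie's tensor product theorem for perfect stacks, this gives
\[\Ind\Perf^\qc(\locsys{W_E}{\check{G}'})\simeq\Ind\Perf^\qc(\locsys{W_E}{\check{G}})\otimes_{\QCoh(\locsys{W_E}{\check{D}})}\Mod_\Lambda .\]

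\emph{Geometric side.} I will show that \(\Bun_G\to\Bun_{G'}\) is a \(\Bun_D\)-torsor. Surjectivity should follow from the vanishing of the relevant \(H^2\)-obstruction on the Fargues--Fontaine curve together with \(H^1(\Gamma,D)=0\). Descent then identifies \(\D(\Bun_{G'})\) with the \((\D(\Bun_D),\star)\)-coinvariants of \(\D(\Bun_G)\), that is
\[\D(\Bun_{G'})\simeq\D(\Bun_G)\otimes_{\D(\Bun_D)}\Mod_\Lambda .\]
Under \cref{lem: cartier dual BunT QCoh(ParT)}, the augmentation used here (pushforward to a point) should correspond to the fibre at the trivial parameter. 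The banality assumption on \(D\) is what makes \(\Mod_\Lambda\) a dualizable, indeed compactly generated, module. It also ensures the relative tensor product preserves compact objects, so that \(\D(\Bun_{G'})^\omega\) is generated by pushforwards \(\pi_!\) of compact objects on \(\Bun_G\). The induced Whittaker sheaf is \(\mathcal{W}'=\pi_!\mathcal{W}\), since \(U\) maps isomorphically to the unipotent radical for \(G'\).

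\emph{Transporting the conjecture.} The functor \(-*\mathcal{W}'\) is the base change of \(-*\mathcal{W}\). On compact objects, \(\Hom\)'s over \(G'\) are obtained from the \(\QCoh(\locsys{W_E}{\check{D}})\)-enriched \(\Hom\)'s over \(G\) by tensoring with the fibre functor. Consequently, if \(c_{\mathcal{W}}\) is fully faithful on compacts, so is \(c_{\mathcal{W}'}\); this gives part (1) for \(G'\). For the essential image when \(\ell\in\Lambda^\times\), the nilpotence condition is vacuous. The image of \(\D(\Bun_{G'})^\omega\) is then generated by the restrictions of \(\Coh(\locsys{W_E}{\check{G}})\) to the closed fibre. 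These generate \(\Coh(\locsys{W_E}{\check{G}'})\), because every coherent sheaf on a closed substack is a finite extension of restrictions of coherent sheaves from the ambient stack. Together with the part (1) statement, this yields the full \cref{conj: categorical conjecture} for \(G'\).

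I expect the main obstacle to be the geometric descent statement. This means establishing that \(\D(\Bun_{G'})\) really is the relative tensor product, including the torsor property and surjectivity of \(\Bun_G\to\Bun_{G'}\), and checking that the resulting augmentation matches the spectral fibre at the trivial parameter rather than a twist of it. I would attack it stratum by stratum over the Harder--Narasimhan strata, where \(\Bun_D\) acts through \(\pi_0\) and the classifying stacks of \(D(E)\) and of the relevant automorphism groups.
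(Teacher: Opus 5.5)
\paragraph{Gap 1: fully faithfulness does not transfer formally.} Your overall architecture is the paper's: you realise both sides for \(G'\) as base changes along the augmentation of \(\QCoh(\locsys{W_E}{\check{D}})\simeq(\D(\Bun_D),\star)\), with \(-*\mathcal{W}'\) the base change of \(-*\mathcal{W}\). But the step ``consequently, if \(c_{\mathcal{W}}\) is fully faithful on compacts, so is \(c_{\mathcal{W}'}\)'' is not formal, and it is exactly where the hypotheses are used.

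For compact \(A,B\) on \(\Bun_G\) you have \(\Hom(p_!A,p_!B)=\Hom(A,p^!p_!B)\) with \(p^!p_!B\cong\Lambda_{\Bun_D}\star B\). The constant sheaf here corresponds under Cartier duality to the non-compact object \(i_*\oo\), with \(i\) the trivial \(\check{D}\)-parameter, so \(p^!p_!B\) is not compact. On the spectral side \(c_{\mathcal{W}}(A)\) is only coherent, hence in general not compact in \(\Ind\Perf^\qc(\locsys{W_E}{\check{G}})\). So the enriched \(\Hom\) out of it need not commute with the colimit presenting \(i_*\oo\). Neither fully faithfulness on compacts nor ``tensoring the enriched \(\Hom\) with the fibre functor'' gives you \(\Hom(A,p^!p_!B)\cong\Hom(c_{\mathcal{W}}A,c_{\mathcal{W}}p^!p_!B)\).

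The missing idea is \cref{lem: pullback compact automorphic side}, which shows that \(p^!\) of a compact object is compact on every connected component of \(\Bun_G\). It rests on three facts:
\begin{itemize}
\item Since \(p^!\cong p^*\), the fibre of \(p\) over \(\Bun_{G'}^{b'}\) splits along \(\pi_0(\Bun_D)\) into pieces lying in distinct connected components of \(\Bun_G\).
\item The map \(G_b(E)\to G'_{b'}(E)\) is surjective with kernel \(D(E)\); this uses \(H^1(\Gamma,D)=0\).
\item \([*/D(E)]\to *\) is prim precisely because \(\Lambda\in\D(D(E))^\omega\), which is banality.
\end{itemize}
Combined with the fact that \(c_{\mathcal{W}}\) respects \(\pi_0(\Bun_G)\cong\pi_1(G)_\Gamma\cong X^*(Z(\check{G})^\Gamma)\), this gives fully faithfulness on \(p^!\)-images of compacts. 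The paper then concludes by writing \(\Hom\)'s over \(G'\) as the totalisation of the cosimplicial \(\Hom\)'s over \(D^n\times G\).

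The use you do make of banality, that \(p_!\) of compacts generates \(\D(\Bun_{G'})^\omega\), needs no hypothesis at all, since \(p^*\) is conservative. Also, in the first part of the statement \(\ell\) need not be invertible, so the spectral K\"unneth formula cannot come from the tensor product theorem for perfect stacks: \(\locsys{W_E}{\check{G}}\) is then not perfect, which is why \(\Ind\Perf\) appears. The paper proves it in \cref{lem: kunneth indperf} via free-group presentations and Barr--Beck.

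\paragraph{Gap 2: the essential-image argument fails as stated.} This concerns the case \(\ell\in\Lambda^\times\). First, \(z\) is not a closed immersion; it is the base change of \(*\to B\check{D}^\Gamma\to\locsys{W_E}{\check{D}}\). More seriously, derived pullbacks of coherent sheaves need not generate \(\Coh\) of a fibre, even for flat families over a smooth base. For example, \(\Spec k[t]/(t^2)\) is the (derived) fibre over \(0\) of \(\bbA^1\to\bbA^1\), \(t\mapsto t^2\). Derived pullback sends \(\Coh(\bbA^1)=\Perf(\bbA^1)\) into \(\Perf(k[t]/(t^2))\), whose thick closure misses the residue field. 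Your claim holds only for underived restriction.

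The paper instead uses \cref{lem: pullback compact spectral side}: in characteristic \(0\) the map \(B\check{D}^\Gamma\to\locsys{W_E}{\check{D}}\) and its base change are lci closed immersions, and \(Z(\check{G})^\Gamma\to\check{D}^\Gamma\) is surjective, so the isotypic pieces of \(z_*\) of a coherent sheaf are coherent. This lets it restrict the cosimplicial diagram of right adjoints to compact objects. It then passes to Ind-categories and back to left adjoints, identifying \(\Ind\) of the essential image with \(\Mod_\Lambda\otimes_{\QCoh(\locsys{W_E}{\check{D}})}\Ind\Coh^\qc(\locsys{W_E}{\check{G}})\). A categorical K\"unneth formula for \(\IndCoh\) in this situation then identifies this with \(\Ind\Coh^\qc(\locsys{W_E}{\check{G}'})\). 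Some input of this kind, specific to the family \(\locsys{W_E}{\check{G}}\to\locsys{W_E}{\check{D}}\), is needed in place of your generation claim.

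Finally, the descent step you single out as the main obstacle is comparatively easy in the paper: \(p\) is a torsor under the cohomologically smooth group stack \(\Bun_D\), and \(p^!\Lambda\cong\Lambda\). The real content is the compactness analysis above.
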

\begin{remark}
    We believe the condition on non-banal primes to be an artifact of the proof strategy.
\end{remark}
Linus Hamman found the condition of (weakly) Langlands-Shahidi type.
For a precise discussion, we refer to \cref{sec: (weakly) Langlands-Shahidi type}.
Let us remark that for \(G=\GL_n\), weakly Langlands-Shahidi type means that for a direct sum decomposition \(\varphi\cong\varphi_1\oplus\dots\varphi_r\), we have \(\varphi_i\ncong\varphi_j(1)\) for \(i\neq j\), Langlands-Shahidi type means we additionally require \(\varphi_i\ncong\varphi_j\) for \(i\neq j\) as well.
We write \(\D^\LSt(\Bun_G)\) for sheaves on \(\Bun_G\) with parameters of Langlands-Shahidi type and \(\D^\wLSt(\Bun_G)\) for sheaves on \(\Bun_G\) of weakly Langlands-Shahidi type.
In this case, one can conjecture more.
For one, there is a semi-orthogonal decomposition on \(\D^\LSt(\Bun_G)\) induced by \(\lvert\Bun_G\rvert\).
\begin{conjecture}[{\cite[Conjecture 3.2.17.]{hamann-thesis}}]
    The semi-orthogonal decomposition on \(\D^\LSt(\Bun_G)\) splits, i.e.
    \begin{equation*}
        \D^\LSt(\Bun_G)\simeq\prod_{b\in\lvert\Bun_G\rvert}\D^{\LSt}(\Bun_G^b),
    \end{equation*}
    where the functor is induced by the collection \(i_{b!}\), equivalently \(i_{b*}\) equivalently \(i_{b\sharp}\).
    All three pushforwards are isomorphic in this case.
\end{conjecture}
\begin{theorem}[{\cref{thm: langlands shahidi splitting descends}}]
    If previous conjecture holds for \(G\), then it also holds for \(G'\).
\end{theorem}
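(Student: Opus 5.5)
The plan is to realize \(\D(\Bun_{G'})\) as a localization or base change of \(\D(\Bun_G)\) along the central torus action, and then transport the splitting. Since \(G\to G'\) is a \(D\)-torsor of groups with \(D\) central, \(\Bun_G\to\Bun_{G'}\) is, up to the obstruction coming from \(H^1(\Gamma,\check D)\), a \(\Bun_D\)-torsor. The assumption \(H^1(\Gamma,D)=0\) is exactly what should make \(\Bun_G\to\Bun_{G'}\) surjective on \(\lvert\cdot\rvert\).

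I would then identify
\[
\D(\Bun_{G'})\simeq \D(\Bun_G)\otimes_{\D(\Bun_D)}\D(*).
\]
Here \(\D(*)\) is a \(\D(\Bun_D)\)-module via the augmentation. Spectrally, by \cref{lem: cartier dual BunT QCoh(ParT)} and \cref{thm: geometric convolution action is spectral action}, this is base change along the point of \(\locsys{W_E}{\check D}\) corresponding to the trivial parameter. That point is the image of \(\locsys{W_E}{\check G'}\to\locsys{W_E}{\check G}\).

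In particular, the Langlands--Shahidi condition is compatible. A parameter for \(G'\) is of (weakly) Langlands--Shahidi type if and only if its image in \(\check G\) is, because the condition only involves the adjoint action and \(\check G'\to\check G\) induces an isomorphism on adjoint groups up to the central torus. Hence \(\D^{\LSt}(\Bun_{G'})\) is obtained from \(\D^{\LSt}(\Bun_G)\) by the same base change.

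The next step is to compare strata. For \(b'\in\lvert\Bun_{G'}\rvert\), its preimage in \(\lvert\Bun_G\rvert\) is a \(\pi_0(\Bun_D)=B(D)\)-orbit of points \(b\). The stratum \(\Bun_{G'}^{b'}\) is the quotient of \(\coprod_{b\mapsto b'}\Bun_G^b\) by \(\Bun_D\). The functors \(i_{b!},i_{b*},i_{b\sharp}\) commute with the \(\D(\Bun_D)\)-action: convolution is \(a_!\) of an exterior product, and the action map is compatible with the stratification, being a translation on strata.

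So the product decomposition \(\D^\LSt(\Bun_G)\simeq\prod_b\D^\LSt(\Bun_G^b)\) is an equivalence of \(\D(\Bun_D)\)-modules. The action permutes factors in \(B(D)\)-orbits, with the connected part \(\Bun_D^{1}=[*/D(E)]\) acting factorwise. Applying \(-\otimes_{\D(\Bun_D)}\D(*)\), which commutes with these products since within an orbit they are finite sums in the compactly generated setting, yields
\[
\D^\LSt(\Bun_{G'})\simeq\prod_{b'}\D^\LSt(\Bun_{G'}^{b'}).
\]
It remains to check that the resulting functor is the one induced by the \(i_{b'!}\), and that \(i_{b'!}\simeq i_{b'*}\simeq i_{b'\sharp}\) there. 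Both follow because the base change of the corresponding maps upstairs are isomorphisms and the pullback \(\D(\Bun_{G'})\to\D(\Bun_G)\) is conservative on the relevant subcategories.

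The main obstacle is the first step: justifying the descent identification of \(\D(\Bun_{G'})\), and its LSt part, as the relative tensor product. This needs care with the non-connected \(\pi_0\) and with \(D(E)\)-invariants versus coinvariants for modular \(\Lambda\). I would try first to avoid the full tensor identification. Instead I would work directly with the conservative pullback \(q^*\colon\D(\Bun_{G'})\to\D(\Bun_G)\). One checks splitting by showing \(\Hom(i_{b'!}A,i_{c'!}B)=0\) and \(i_{b'!}\simeq i_{b'*}\) after applying \(q^*\), using base change for \(q\) and the hypothesis upstairs. This should suffice to split the semi-orthogonal decomposition.
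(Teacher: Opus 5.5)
Your main line is the paper's proof. You identify \(\D^\LSt(\Bun_{G'})\) with \(\Mod_\Lambda\otimes_{(\D(\Bun_D),\star)}\D^\LSt(\Bun_G)\) via universal \(!\)-descent along the \(\Bun_D\)-torsor \(p\). You then regroup the upstairs product by the fibres \(p^{-1}(\Bun_{G'}^{b'})\), pull the relative tensor product through the product, and descend each stratum along the base-changed torsor.

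\textbf{The commutation step is misjustified.} Your reason for pulling the tensor product through the product is wrong, because nothing here is a finite sum. The fibres of \(\lvert\Bun_G\rvert\to\lvert\Bun_{G'}\rvert\) are \(B(D)=X_*(D)_\Gamma\)-orbits, and these are typically infinite (a \(\bbZ\)-torsor for \(\bbG_m\subset\GL_n\)). In general there are also infinitely many orbits. The step actually works for three reasons, as in the paper:
\begin{itemize}
\item in \(\Pr\), countable products agree with countable coproducts;
\item \(\Mod_\Lambda\otimes_{\D(\Bun_D)}-\) commutes with coproducts of modules;
\item each grouped factor \(\D^\LSt(p^{-1}(\Bun_{G'}^{b'}))\) is stable under convolution by all of \(\D(\Bun_D)\). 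The individual factors \(\D^\LSt(\Bun_G^b)\) are not, since they are permuted.
\end{itemize}

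\textbf{Two smaller points.} First, matching the LSt subcategories needs the automorphic input that \(p^*\cong p^!\) preserves and detects the LSt condition (Fargues--Scholze IX.6.1). Invariance of the condition on the spectral side alone does not give this. Second, \(\lvert\Bun_G\rvert\to\lvert\Bun_{G'}\rvert\) is surjective for any central torus, so surjectivity is not where \(H^1(\Gamma,D)=0\) enters.

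\textbf{Your fallback.} The route you sketch at the end is genuinely different, and it does work, though not quite as phrased.
\begin{itemize}
\item Since \(p\) is a \(\Bun_D\)-torsor, \(p^*\) is conservative.
\item Let \(j\from p^{-1}(\Bun_{G'}^{b'})\to\Bun_G\) be the inclusion and \(q\) the restriction of \(p\). Base change together with \(p^!\cong p^*\) gives \(p^*i_{b'!}A\cong j_!q^*A\) and \(p^*i_{b'*}A\cong p^!i_{b'*}A\cong j_*q^!A\cong j_*q^*A\).
\item Since \(q^*A\) is LSt, the hypothesis for \(G\) makes \(i_{b'!}A\to i_{b'*}A\) an isomorphism.
\item Orthogonality of distinct strata then follows downstairs by adjunction, using \(i_{c'!}B\cong i_{c'*}B\): \(\Hom(i_{b'!}A,i_{c'*}B)=\Hom(i_{c'}^*i_{b'!}A,B)=0\).
\end{itemize}
The orthogonality does not follow from vanishing after pullback, because a conservative functor need not be faithful. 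The identifications \(i_{b'!}\cong i_{b'*}\) and the orthogonality split the semi-orthogonal decomposition on each quasi-compact open, hence on \(\D^\LSt(\Bun_{G'})\).

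This route needs no relative tensor products, only conservativity, base change and the Fargues--Scholze compatibility. The paper's route gives more: each LSt stratum category for \(G'\) is the \(\D(\Bun_D)\)-coinvariants of the corresponding one for \(G\).
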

There is also a categorical conjecture for \(\D^\wLSt(\Bun_G)\), we refer to \cref{conj: langlands shahidi type conjecture} for a precise formulation.
Observe that there is an additional \(t\)-exactness claim that appears as \cref{conj: langlands shahidi type conjecture}(3).
In this case, we get the expected compatibility claim, this appears as \cref{thm: Langlands-Shahidi type compatibility}.
Note that in this case, no condition on non-banal primes for \(D\) appears.

The main method we use apart from \cref{thm: geometric convolution action is spectral action} is the fact that \(p\from\Bun_G\to\Bun_G'\) is smooth with trival dualizing complex, this is \cref{lem: universal !-descent BunG BunG'}, which allows us to explicitly handle various Lurie tensor product computations, which use \(p^!\), which we can replace by the more explicit \(p^*\).

In the last section we observe that \(G=\GL_n\) is an example were we can apply the whole discussion, using the main results of \cite{zou2025categoricallocallanglandsmathrmgln}.

\SkipTocEntry
\subsection{Acknowledgments}
We thank Peter Scholze for pointing out that to compute tensor products for convolution actions, one should prove \(!\)-descent.
We thank Jean-Fran\c{c}ois Dat for explain to us what it means to be compatible with central characters in the classical local Langlands correspondence.
The author has been funded by the Simons Collaboration on Perfection in Algebra, Geometry, and Topology as postdoctoral researcher at the CNRS.
\SkipTocEntry
\subsection{Notation}
\leavevmode
\begin{itemize}
    \item Let \(E\) be a nonarchimedian local field of residue characterstic \(p\) and residue field \(\bbF_{q}\), absolute Galois group \(\Gamma\) and \(C\) a completed algebraic closure of \(E\).
    \item Let \(\Lambda\) be a \(\bbZ_\ell[\sqrt{q}]\)-algebra.
    \item Given a connected reductive group \(G\) over \(E\), we write \(\check{G}\) for the Langlands dual group. We write \(\Bun_G\) for the stack of \(G\)-bundles on the Fargues-Fontaine curve and \(\locsys{W_E}{\check{G}}\) for the stack of \(L\)-parameters. %
    \item Let \(\ell\neq p\) be an auxilliary prime satisfying \(\ell\nmid\lvert\pi_0(Z(G))\rvert\). This ensures that the spectral action is well-defined.
    \item All relevant \(v\)-stacks shall live over \(\Spd(\overline{\bbF_q})\) for us, unless mentioned otherwise. We write \(\vStk\) for the category thereof.
    \item We write \(\D\) for \(\D_{\mathrm{mot}}(-)\otimes_{\D_{\mathrm{mot}}(\Spd(\overline{\bbF_q}))}\Mod_\Lambda\), where the map \(\D_{\mathrm{mot}}(\overline{\bbF_q})\to\Mod_\Lambda\) is induced by the \'etale realization of motives as explained in \cite{motivic-geometrization}.
    \item We write \(\Bun_G\) for the stack of \(G\)-bundles on the Fargues-Fontaine curve.
    \item For an Artin \(v\)-stack \(X\) let \(\D(X,\Lambda)\) denote the category of lisse sheaves as defined in \cite[Definition VII.6.1.]{geometrization}, sometimes we will just write \(\D(X)\).
    
    \item For a connected reductive group \(H/E\), let \(\D(H(E),\Lambda)\) denote the derived category of smooth \(H(E)\)-representations, sometimes we will just write \(\D(H(E))\).

    \item For \(b\in B(H)\), let \(i_b\from\Bun_H^b\to\Bun_H\) be the canonical locally closed immersion and \(s_b\from [*/H_b(E)]\to\Bun_H^b\) be the splitting of \(\Bun_H^b\to [*/H_b(E)]\) as constructed in \cite[Proposition III.5.3.]{geometrization}.
    
    \item We will work fully derived, all functors are implicitly derived.
    \item We will say ``category'' for \((\infty,1)\)-category.
    \item We write \(\Pr\) for the category of \(\aleph_1\)-presentable categories. Since we will not work with other presentable categories, we will just say ``presentable category'' to mean ``\(\aleph_1\)-presentable category''.\footnote{This is only used in \cref{lem: 2QCoh(BD)}, as we rely on the theory of Gestalten which uses \(\aleph_1\)-presentable categories. The difference between using them and all presentable categories is immaterial in this article.}
    \item If we write a tensor product of categories, we mean the tensor product on \(\Pr\).
    \item By ``ring'' we mean ``commutative ring''.
    \item We write \(\An\) for the category of anima, also called \(\infty\)-groupoids in the literature.
    \item We write \(\Stk^\alg=\Fun(\mathrm{CRing},\An)\), where \(\mathrm{CRing}\) is the category of commutative rings.
    \item On \(\Stk^\alg\) we consider \(\QCoh\) via right Kan extension of the assignment \(A\mapsto\Mod_A\). We consider this as a 6-functor formalism by taking all morphisms to be in \(P\) on \(\mathrm{CRing}\) and only isomorphisms to be in \(I\). This defines a 6-functor formalisms on affine schemes by \cite[Proposition A.5.10]{mann2022padic6functorformalismrigidanalytic}, which we extend to all stacks using \cite[Theorem 3.4.11]{heyer-mann}.
    \item For \(X\) an algebraic stack, we write \(\Perf^\qc(X)\) for the category of perfect complexes with quasi-compact support.
    \item In regards to \(t\)-structures, we use subscript to denote homological conventions and superscripts do denote cohomological conventions. This means that if \(\mathcal{C}\) is a stable category with \(t\)-structure, \(\mathcal{C}_{\geq 0}=\mathcal{C}^{\leq 0}\), and \(\mathcal{C}_{\geq 0}\) denotes connective objects.
\end{itemize}

%% file: spectral-preprations.tex
\section{Spectral preparations}
All geometry in this section will live over \(\Lambda\), although the results hold for any ring, even any \(\mathrm{E}_\infty\)-ring spectrum.
The goal is to understand the datum of a \(\QCoh(\locsys{W_E}{\check{T}})\), at least when \(T\) is an satisifies \(H^1(\Gamma,T)=0\).
The following is well-known to the experts and probably admits a much more elementary proof.
\begin{lemma}\label{lem: 2QCoh(BD)}
    Let \(M\) be a finitely generated abelian group, let \(D=\Hom(M,\bbG_m)\) be the diagonalizable group scheme attached to \(M\).
    Then \(\Mod_{\QCoh(BD)}(\Pr)\) identifies with \(\Fun(BM,\Pr)\).
\end{lemma}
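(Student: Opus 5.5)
The plan is to identify both sides with module categories over monoidal categories and then use the elementary Cartier duality between \(D\) and \(M\). First, since \(D=\Hom(M,\bbG_m)\) is diagonalizable, \(\QCoh(BD)=\Rep(D)\) is identified, as a symmetric monoidal category, with \(M\)-graded \(\Lambda\)-modules \(\Fun(M^{\delta},\Mod_\Lambda)\) equipped with Day convolution. Here \(M^\delta\) denotes \(M\) viewed as a discrete symmetric monoidal anima. This follows from the comodule description: \(\oo(D)=\Lambda[M]\) is the group algebra, and a comodule over a group algebra is precisely an \(M\)-grading. Equivalently, \(\QCoh(BD)\simeq\Mod_\Lambda\otimes\Fun(M,\An)\), where \(\Fun(M,\An)=\mathcal{P}(M)\) is the presheaf category with Day convolution, i.e.\ the free presentable symmetric monoidal category on \(M\) with its given monoid structure. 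Note that \(\Fun(BM,\Pr)\) should be read in \(\Lambda\)-linear presentable categories, i.e.\ \(\Fun(BM,\Mod_{\Mod_\Lambda}(\Pr))\), which is implicit given the geometry over \(\Lambda\).

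Second, I will compute \(\Mod_{\mathcal{P}(M)}(\Pr)\). By the universal property of the Day convolution on presheaves, a \(\mathcal{P}(M)\)-module structure on a presentable \(\mathcal{C}\) is the same as a monoidal functor \(M\to\End^L(\mathcal{C})\). Since \(M\) is a group, this lands in \(\mathrm{Aut}(\mathcal{C})\), so it is the same as an \(\mathrm{E}_1\)-map \(M\to\mathrm{Aut}(\mathcal{C})\), i.e.\ an action of \(M\) on \(\mathcal{C}\). An action of \(M\) on \(\mathcal{C}\) is in turn a functor \(BM\to\Pr\) sending the point to \(\mathcal{C}\). To make this an equivalence of categories rather than a statement about objects, I would use the general fact that for a monoidal \(\mathcal{A}\) the category \(\Mod_{\mathcal{P}(\mathcal{A})}(\Pr)\) is identified with \(\Fun^{\otimes\text{-lax}}\)-style modules over \(\mathcal{A}\) in \(\widehat{\mathrm{Cat}}\) with colimit-preserving structure. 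For \(\mathcal{A}=M\) a grouplike discrete monoid, left modules in \(\Pr\) are exactly \(\Fun(BM,\Pr)\). This is Lurie's identification of modules over \(\mathcal{P}(\mathcal{A})\) with \(\mathcal{A}\)-modules in \(\Pr\) (HA 4.8.1), combined with \(\mathrm{LMod}_{M}(\mathcal{X})\simeq\Fun(BM,\mathcal{X})\) for a group \(M\) acting on an object of a category \(\mathcal{X}\). Tensoring with \(\Mod_\Lambda\) then passes to the \(\Lambda\)-linear setting.

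The step I expect to be the main obstacle is the first one, namely making the symmetric monoidal identification \(\QCoh(BD)\simeq\Fun(M^\delta,\Mod_\Lambda)\) rigorous at the \(\infty\)-categorical level. In particular, \(\QCoh(BD)\) is defined by right Kan extension as a limit over the Čech nerve of \(\Spec\Lambda\to BD\), and this must be matched with comodules over \(\Lambda[M]\) together with their monoidal structure. I would handle this by Barr–Beck–Lurie: the pullback \(\QCoh(BD)\to\Mod_\Lambda\) is conservative and colimit-preserving, since \(D\) is flat and affine. The resulting comonad is tensoring with \(\Lambda[M]\), and its coalgebras are \(M\)-graded modules because \(\Lambda[M]\cong\bigoplus_{m\in M}\Lambda\) as a coalgebra. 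Monoidality is then checked on the generators \(\Lambda\langle m\rangle\), the characters of \(D\), whose tensor products are given by addition in \(M\). The 1-categorical bookkeeping about \(\aleph_1\)-presentability is harmless, since all categories involved are compactly generated.
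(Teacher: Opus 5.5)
Your argument is correct, but it takes a different route from the paper. The paper works inside the Gestalten formalism. It quotes a theorem from that theory and observes that \(\Fun(BM,\Pr)\simeq\lim_{\Delta}\Fun(M^n,\Pr)\) is exactly the \v{C}ech formula computing the 2-category \((\oo(BM)/\oo(\Gest(\Lambda)))_2\). It then checks the required 1-affineness of \(BD\to *\) via 0-affineness of \(D\).

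You instead never leave the monoidal category \(\QCoh(BD)\). Cartier duality identifies it with \(\Fun(M,\Mod_\Lambda)\simeq\mathcal{P}(M)\otimes\Mod_\Lambda\) under Day convolution. The universal property of Day convolution (Lurie, Higher Algebra, \S4.8.1) then turns a module structure on \(\mathcal{C}\) into an \(\mathrm{E}_1\)-map \(M\to\mathrm{Aut}(\mathcal{C})\), i.e.\ a functor \(BM\to\Pr\).

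What your route buys:
\begin{enumerate}
\item It needs no 1-affineness at all, since the statement only concerns modules over \(\QCoh(BD)\), not sheaves of categories on \(BD\). It is the kind of more elementary proof the author alludes to.
\item It makes the shape of the data explicit. For \(M\cong\bbZ^n\) with \(n\geq 2\), an action consists of \(n\) autoequivalences together with coherent commutation isomorphisms, not merely \(n\) endofunctors.
\end{enumerate}
In exchange, the paper's route gets the coherence from the cited machinery and lands directly in the kernel/Gestalt framework used in the neighbouring lemmas. Your reading of \(\Fun(BM,\Pr)\) as \(\Lambda\)-linear, i.e.\ \(\Fun(BM,\Mod_{\Mod_\Lambda}(\Pr))\), matches the paper, which works relative to \(\Gest(\Lambda)\).

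One step should be rephrased. In the \(\infty\)-categorical setting a monoidal structure on a functor is extra data, so it cannot be ``checked on generators''. Construct it first:
\begin{enumerate}
\item The characters \(m\mapsto\oo(m)\) define a symmetric monoidal functor \(M\to\QCoh(BD)\).
\item By the universal property of Day convolution, this extends to a colimit-preserving symmetric monoidal functor \(\Fun(M,\Mod_\Lambda)\to\QCoh(BD)\).
\item Only then check on generators that this functor is an equivalence. Use that \(R\Gamma(BD,\oo(m))\) is \(\Lambda\) for \(m=0\) and \(0\) otherwise, and that the \(\oo(m)\) generate; the latter is where your comodule description enters.
\end{enumerate}
Alternatively, quote the second half of \cref{lem: cartier duality on qcoh}. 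The paper proves it independently of the present lemma via an external reference, so there is no circularity.
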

\begin{proof}
    This is immedate from \cite[Theorem 10.8]{gestalten}.
    As \(BM=\colim_{n\in\Delta^\op} M^n\), even in the category of categories, as all edges of the associated cocartesian fibration are cocartesian and thus get inverted, we see that \(\Fun(BM,\Pr)=\lim\Fun(M^n,\Pr)\).
    This is precisely the formula that also computes \((\oo(BM)/\oo(\Gest(\Lambda)))_2\).
    To see that \(BD\to *\) is 1-affine, it suffices to see that \(D\to *\) is 0-affine, as explained after \cite[Theorem 12.3.]{gestalten}, this is clear.
\end{proof}
\begin{remark}\label{rem: 2QCoh(BD) in the free case explicit description}
    Assume that \(M\) is additionally free, with basis \(m_1,\dots,m_n\).
    We see that a category \(\mathcal{C}\) equipped with an action of \(\QCoh(BD)\) is the same thing as giving a functor \(M\to\End(\mathcal{C})\).
    As \(M\) as assumed to be free on \(m_1,\dots,m_n\), this is equivalent to giving the images of \(m_i\), this equivalent to the datum of \(n\) endofunctors \(F_i\) on \(\mathcal{C}\).
    We stress that there are no other coherence conditions assumed here!
\end{remark}
\begin{lemma}\label{lem: cartier duality on qcoh}
    In the setting of the previous lemma, \(\QCoh(D)\) is monoidally equivalent to \(\QCoh(BM)\) with the convolution monoidal structure.
    Similarly, \(\QCoh(BD)\) is monoidally equivalent to \(\QCoh(M)\) with the convolution monoidal structure.
\end{lemma}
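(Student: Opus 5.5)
We prove both claims by direct computation with the Hopf algebra \(\Lambda[M]\). Write \(D=\Spec\Lambda[M]\), where \(\Lambda[M]\) is the group algebra with comultiplication \(m\mapsto m\otimes m\).

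\emph{Second claim.} This is the most concrete of the two. Representations of \(D\) are \(\Lambda[M]\)-comodules, so an object of \(\QCoh(BD)\) is an \(M\)-graded \(\Lambda\)-module. In the derived setting this holds because \(D\to *\) is affine and flat, which gives
\[
\QCoh(BD)\simeq\lim_{\Delta}\QCoh(D^\bullet)\simeq\mathrm{coMod}_{\Lambda[M]}(\Mod_\Lambda).
\]
Comodules over \(\Lambda[M]\) are equivalent to \(\prod_{m\in M}\Mod_\Lambda\simeq\Fun(M,\Mod_\Lambda)=\QCoh(M)\), viewing \(M\) as a discrete scheme \(\coprod_M\Spec\Lambda\).
\begin{itemize}
\item To construct the functor without coherence problems, send \((V_m)_m\) to \(\bigoplus_m V_m\otimes\chi_m\), where \(\chi_m\) is the character of \(D\) given by \(m\). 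This is a colimit of tensors with line bundles on \(BD\).
\item Full faithfulness reduces to \(\Hom_{BD}(\chi_m,\chi_{m'}\otimes V)=\delta_{mm'}V\). This follows because \(R\Gamma(BD,\chi)\) is the \(D\)-invariants, and these are exact: diagonalizable groups are linearly reductive over any base, since the invariants are the degree-0 summand of the grading.
\item Essential surjectivity follows since \(\oo(D)=\bigoplus_m\chi_m\) generates.
\end{itemize}
The tensor product of characters satisfies \(\chi_m\otimes\chi_{m'}=\chi_{m+m'}\). Under the equivalence, the tensor product on \(BD\) therefore corresponds to \((V\star W)_n=\bigoplus_{m+m'=n}V_m\otimes W_{m'}\). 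This is exactly \(\mu_!(V\boxtimes W)\) for the addition map \(\mu\from M\times M\to M\), i.e.\ the convolution structure.

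\emph{First claim.} We have \(\QCoh(D)=\Mod_{\Lambda[M]}\). On the other side, \(\QCoh(BM)=\Fun(BM,\Mod_\Lambda)\simeq\Mod_{\Lambda[M]}\), since a local system on \(BM\) is a module over the group algebra \(C_*(M;\Lambda)=\Lambda[M]\); here \(M\) is discrete, so there are no higher homotopy terms. Convolution on \(BM\) is \(\mu_!(A\boxtimes B)\) for \(\mu\from BM\times BM\to BM\). The functor \(\mu_!\) on \(\QCoh\) of classifying stacks of discrete groups is induction along \(\Lambda[M\times M]\to\Lambda[M]\), i.e.\ base change \(-\otimes_{\Lambda[M]\otimes\Lambda[M]}\Lambda[M]\). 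This is exactly the pointwise tensor product of \(\Lambda[M]\)-modules, i.e.\ \(\otimes\) on \(\QCoh(D)\). We should confirm that the \(!\)-pushforward in the chosen \(6\)-functor formalism (right Kan extended \(\QCoh\) with all maps in \(P\)) is the left adjoint of pullback here. The projection formula and the fact that \(BM\to *\) is a colimit of affines should give \(\mu_!\) as left Kan extension, hence induction.

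\emph{Main obstacle.} The hard part is upgrading these identifications to \(E_\infty\) (or at least \(E_1\)) monoidal equivalences rather than equivalences that merely match on objects. The plan is to deduce them from functoriality. Both sides are functorial in \(M\) for products: \(\QCoh(BD_M)\) is contravariant in \(D\), hence covariant in \(M\), and \(M\mapsto(\QCoh(M),\star)\) is covariant via \(!\)-pushforward. Both send finite products of groups to tensor products of categories. The monoidal structures are then induced by the commutative group object structure of \(M\) in spans/correspondences. So it suffices to give a natural equivalence of functors from finitely generated abelian groups, with correspondences, to \(\Pr\). Concretely, the equivalence \(\QCoh(BD)\simeq\QCoh(M)\) is the Fourier–Mukai transform with kernel the universal character on \(BD\times M\). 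Its compatibility with \(\mu_!\) versus \(\otimes\) is the multiplicativity \(\chi_{m+m'}=\chi_m\otimes\chi_{m'}\) of that kernel, which is an isomorphism of line bundles holding canonically with all higher coherences because it comes from the group homomorphism \(M\to\Hom(D,\bbG_m)\). The first claim follows by the same argument with the kernel being the tautological line bundle on \(D\times BM\). Alternatively, one can note that it is obtained from the second claim by applying \(\Mod\) and the Cartier-duality symmetry \(D\leftrightarrow BM\).
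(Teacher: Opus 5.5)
Your proof is correct in outline, but it takes a genuinely different route from the paper. The paper does no computation. It applies the general Cartier duality theorem for 6-functor formalisms \cite[Proposition 2.4.3.]{camargo2025cartierdualitygerbesvector} to \(\QCoh\). The only thing it checks is that \(\QCoh\) has categorical K\"unneth, so that the kernel 2-category over \(\Spec(\Lambda)\) is \(\Mod_{\Mod_\Lambda}(\Pr)\) \cite[Proposition 9.3.]{gestalten}. That theorem then supplies all the higher coherences of the monoidal equivalences. It also settles that ``convolution'' means the structure induced by the 6-functor formalism.

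You instead identify the underlying categories by hand: \(M\)-graded modules via characters, and \(\Lambda[M]\)-modules via local systems on \(BM\). You check that the tensor structures agree on objects and treat the coherent upgrade separately. This gives explicit formulas (\(e_!\Lambda\leftrightarrow\oo_D\), \(\delta_m\leftrightarrow\chi_m\), invariants as the weight-zero summand) and avoids the Gestalten/kernel machinery. Such explicit matchings are what the paper later relies on, for example matching \(\oo\) with \(\cind_K^{T(E)}\Lambda\) in the \(t\)-exactness part of \cref{lem: cartier dual BunT QCoh(ParT)}.

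The cost is that the coherence step, which is the real content of ``monoidally equivalent'', is only a plan in your write-up. Your detour through natural transformations of functors on correspondences of groups is also heavier than needed. There are two short ways to finish.

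For the first claim: the unit \(e_!\Lambda=\Lambda[M]\) of \((\QCoh(BM),\star)\) is a compact generator, since \(\Hom(e_!\Lambda,-)=e^*\) is colimit-preserving and conservative. Its endomorphism ring is discrete and equal to \(\Lambda[M]\). So \((\QCoh(BM),\star)\simeq\Mod_{\Lambda[M]}=(\QCoh(D),\otimes)\) symmetric monoidally, because a discrete \(\mathrm{E}_\infty\)-\(\Lambda\)-algebra is just a commutative \(\Lambda\)-algebra.

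For the second claim: \(\mu_!\)-convolution on \(\Fun(M,\Mod_\Lambda)\) is Day convolution. That is, it is the free \(\Lambda\)-linear presentably symmetric monoidal category on the discrete symmetric monoidal groupoid \(M\). The \(\mathrm{E}_\infty\)-map \(M\to\mathrm{Pic}(\QCoh(BD))\), \(m\mapsto\chi_m\), comes from \(M\to\Hom(D,\bbG_m)\to\mathrm{Map}(BD,B\bbG_m)\). It therefore extends uniquely to a symmetric monoidal colimit-preserving functor. Its underlying functor is your \((V_m)\mapsto\bigoplus_m V_m\otimes\chi_m\), which is an equivalence by your full faithfulness and generation argument.

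In both cases you still need, as you noted, that \(\mu_!\) in this formalism is left Kan extension. This holds because \(*\to BM\) is \'etale there.
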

\begin{proof}
    This is \cite[Proposition 2.4.3.]{camargo2025cartierdualitygerbesvector} for the 6-functor formalism \(\QCoh\).
    This 6-functor formalism has categorical K\"unneth, so that the 2-presentable kernel category satisfies \(K_{\QCoh,\Spec(\Lambda)}=\Mod_{\Mod_\Lambda}(\Pr)\), this is \cite[Proposition 9.3.]{gestalten}.
\end{proof}
\begin{corollary}\label{cor: spectral action H1 vanishing torus}
    Let \(M_1,M_2\) be two finitely generated groups, write \(D_i=\Hom(M_i,\bbG_m)\).
    Then \(\Mod_{\QCoh(D_1\times BD_2)}(\Pr)\) identifies with \(\Fun(BM_2,\Mod_{\QCoh(D_2)}(\Pr))\).
\end{corollary}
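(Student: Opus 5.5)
Write \(X\defined D_1\times BD_2\). The plan is to reduce to \cref{lem: 2QCoh(BD)} by splitting off the \(D_1\)-factor. First I would establish a Künneth-type identification
\begin{equation*}
    \QCoh(D_1\times BD_2)\simeq\QCoh(D_1)\otimes_{\Mod_\Lambda}\QCoh(BD_2)
\end{equation*}
as symmetric monoidal categories. Since \(D_1=\Spec\Lambda[M_1]\) is affine, this is the standard statement that tensoring with the module category of an affine scheme commutes with the limit defining \(\QCoh(BD_2)\). Concretely, \(\QCoh(BD_2)=\lim_{\Delta}\QCoh(D_2^n)\), and \(-\otimes_{\Mod_\Lambda}\Mod_{\Lambda[M_1]}\) preserves this limit because \(\Mod_{\Lambda[M_1]}\) is dualizable, indeed self-dual, over \(\Mod_\Lambda\). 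Alternatively, one can argue as in the proof of \cref{lem: cartier duality on qcoh}: there is categorical Künneth for \(\QCoh\), and \(BD_2\to *\) is \(1\)-affine.

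Next I would use the general fact that for commutative algebras \(A,B\) in \(\Mod_{\Mod_\Lambda}(\Pr)\),
\begin{equation*}
    \Mod_{A\otimes B}(\Pr)\simeq\Mod_{A}\bigl(\Mod_{B}(\Pr)\bigr).
\end{equation*}
Here I take \(A=\QCoh(D_1)\) and \(B=\QCoh(BD_2)\). By \cref{lem: 2QCoh(BD)}, \(\Mod_{\QCoh(BD_2)}(\Pr)\simeq\Fun(BM_2,\Pr)\). Under this equivalence, the object \(\QCoh(D_1)\) with its \(\QCoh(BD_2)\)-module structure coming from pullback along \(X\to BD_2\) corresponds to \(\QCoh(D_1)\) with the trivial \(M_2\)-action, because it is pulled back from a point. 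A \(\QCoh(D_1)\)-module in \(\Fun(BM_2,\Pr)\), where \(\QCoh(D_1)\) carries the trivial action, is then the same as a functor \(BM_2\to\Mod_{\QCoh(D_1)}(\Pr)\). This gives \(\Fun(BM_2,\Mod_{\QCoh(D_1)}(\Pr))\).

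This computation produces \(\QCoh(D_1)\) inside the functor category, whereas the statement writes \(\QCoh(D_2)\). I read that as a typo, since \(D_2\) already enters through \(BM_2\).

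The main obstacle is the compatibility step: checking that the equivalence of \cref{lem: 2QCoh(BD)} is symmetric monoidal, or at least that it sends the algebra \(\QCoh(D_1)\), with its pulled-back structure, to the trivially acted algebra. My first approach would be to use the functoriality of the Gestalten equivalence \cite[Theorem 10.8]{gestalten} along the projection \(BD_2\to *\). This projection corresponds to \(BM_2\to *\), and pullback along it is the constant-functor, i.e. trivial-action, construction, which is symmetric monoidal. Modules over a trivially acted algebra in a functor category are computed pointwise, and that finishes the argument.
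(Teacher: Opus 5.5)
Your reading of the statement is right: the target should be \(\Fun(BM_2,\Mod_{\QCoh(D_1)}(\Pr))\), and the paper's own chain of equalities only makes sense with this correction. Your K\"unneth step is also fine.

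\textbf{The gap.} It lies in the step you single out as the main obstacle: your proposed resolution is false. The equivalence of \cref{lem: 2QCoh(BD)} sends a \(\QCoh(BD_2)\)-module \(N\) to \(N\) with \(m\in M_2\) acting by \(\chi_m\otimes-\), where \(\chi_m\) is the corresponding character line bundle (cf.\ \cref{rem: 2QCoh(BD) in the free case explicit description}). Under it the unit \(\QCoh(BD_2)\) goes to \(\Fun(M_2,\Mod_\Lambda)\) with \(M_2\) acting by translation. It does not go to \(\Mod_\Lambda\) with the trivial action, so the equivalence is not symmetric monoidal for \(\otimes_{\QCoh(BD_2)}\) and the pointwise tensor product.

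The same happens in general:
\begin{itemize}
\item base change along \(BD_2\to *\), i.e.\ \(\mathcal{C}\mapsto\mathcal{C}\otimes\QCoh(BD_2)\), corresponds to induction \(\mathcal{C}\mapsto\Fun(M_2,\mathcal{C})\) with the translation action;
\item the constant functor corresponds instead to \(\mathcal{C}\) with \(\QCoh(BD_2)\) acting through the fibre functor at \(*\to BD_2\).
\end{itemize}
Cartier duality exchanges trivial and regular actions here. In particular \(\QCoh(X)\), viewed as a \(\QCoh(BD_2)\)-module via \(X\to BD_2\), corresponds to \(\Fun(M_2,\QCoh(D_1))\) with the translation action, not to \(\QCoh(D_1)\) with the trivial action. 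So both your claim about this object and your argument via functoriality along \(BD_2\to *\) fail. Modules over the \(B\)-algebra \(A\otimes B=\QCoh(X)\) do not become modules over the constant algebra \(\QCoh(D_1)\) in the way you propose.

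\textbf{The fix.} What you need is linearity over \(\Pr\), not monoidality. Read \(\Mod_A(\Mod_B(\Pr))\) with \(A=\QCoh(D_1)\) acting on \(N\in\Mod_B(\Pr)\) through the tensoring \(A\otimes N\); this is indeed the same datum as an \(A\otimes B\)-module. The equivalence of \cref{lem: 2QCoh(BD)} commutes with \(\mathcal{C}\otimes-\) for \(\mathcal{C}\in\Pr\). This is clear from the explicit description above, or from \(\QCoh(BD_2)\simeq(\QCoh(M_2),\star)\), whose modules are categories with \(M_2\)-action. On \(\Fun(BM_2,\Pr)\) this tensoring is pointwise, with \(\mathcal{C}\) carrying the trivial action. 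Hence \(A\)-modules on the two sides agree, and you obtain \(\Fun(BM_2,\Mod_{\QCoh(D_1)}(\Pr))\).

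The paper uses the same input in a different packaging. It writes \(\Mod_{A\otimes B}(\Pr)\simeq\Mod_A(\Pr)\otimes\Mod_B(\Pr)\) via \cite[Proposition 4.1.(2)]{integral-transforms}, replaces the second factor by \(\Fun(BM_2,\Pr)\), and concludes with \(\mathcal{X}\otimes\Fun(BM_2,\Pr)\simeq\Fun(BM_2,\mathcal{X})\). This only ever uses the equivalence of \cref{lem: 2QCoh(BD)} as one of \(\Pr\)-modules, which is exactly the point your argument was missing.
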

\begin{proof}
    This follows from \cref{lem: 2QCoh(BD)}, using that 
    \begin{align*}
        \Mod_{\QCoh(D_1\times BD_2)}(\Pr)&=\Mod_{\QCoh(D_1)\otimes\QCoh(BD_2)}(\Pr)\\
        &=\Mod_{\QCoh(D_1)}(\Pr)\otimes_{\Pr}\Mod_{\QCoh(D_2)}(\Pr)\\
        &=\Mod_{\QCoh(D_2)}(\Pr)\otimes_{\Pr}\Fun(BM_2,\Pr)\\
        &=\Fun(BM_2,\Mod_{\QCoh(D_2)}(\Pr)).
    \end{align*}
    The first equality is \cite[Theorem 4.7.]{integral-transforms}, the second equality is  \cite[Proposition 4.1.(2)]{integral-transforms}, the third equality is \cref{lem: 2QCoh(BD)}, and the last equality follows from Yoneda.
\end{proof}
\begin{lemma}\label{lem: splitting gerbe}
    Recall from \cite[Lemma 4.1.4.]{categorical-fargues-tori} the map \(\locsys{W_E}{\check{T}}\to\Hom(T(E),\bbG_m)\), which is a \(\check{T}^\Gamma\)-gerbe.
    This gerbe splits, so we have an isomorphism
    \begin{equation*}
        \locsys{W_E}{\check{T}}\cong\Hom(T(E),\bbG_m)\times B\check{T}^\Gamma
    \end{equation*}
\end{lemma}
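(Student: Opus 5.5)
The plan is to construct a section of the coarse map \(Z^1(W_E,\check{T})\to\Hom(T(E),\bbG_m)\) that is a homomorphism of group schemes. A gerbe of this particular shape then splits formally.

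First I would recall the structure behind \cite[Lemma 4.1.4.]{categorical-fargues-tori}. Since \(\check{T}\) is commutative, \(Z^1\defined Z^1(W_E,\check{T})\) is a commutative group scheme over \(\Lambda\) under pointwise multiplication of cocycles. The action of \(\check{T}\) on \(Z^1\) is \(t\cdot\varphi=\varphi\cdot\partial t\), where \(\partial t(w)=t\,w(t)^{-1}\). So \(\check{T}\) acts through the coboundary homomorphism \(\partial\from\check{T}\to Z^1\). This homomorphism has kernel \(\check{T}^\Gamma\) and image \(B^1\cong\check{T}/\check{T}^\Gamma\). The map of the lemma identifies \(Z^1/B^1\) with \(\Hom(T(E),\bbG_m)\). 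Consequently \(Z^1\to\Hom(T(E),\bbG_m)\) is a \(B^1\)-torsor, and \(\locsys{W_E}{\check{T}}=Z^1/\check{T}\) is the \(\check{T}^\Gamma\)-gerbe over it.

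Suppose we have a section \(s\), even only as a map of schemes. Then \(Z^1\cong\Hom(T(E),\bbG_m)\times B^1\) equivariantly for \(\check{T}\), which acts on \(B^1\) through the surjection \(\check{T}\to B^1\). Hence
\begin{equation*}
Z^1/\check{T}\cong\Hom(T(E),\bbG_m)\times\bigl(B^1/\check{T}\bigr)\cong\Hom(T(E),\bbG_m)\times B\check{T}^\Gamma ,
\end{equation*}
because \(\check{T}\to B^1\) is an fpqc surjection with kernel \(\check{T}^\Gamma\).

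To produce the section I would use Cartier duality. Everything here is diagonalizable. Since \(\check{T}\) splits over a finite Galois extension and wild inertia acts through a finite quotient, a cocycle is determined by its values on finitely many generators of a discretized Weil group, as in \cite{categorical-fargues-tori}. Thus \(Z^1\) is a closed subgroup of a finite product of copies of \(\check{T}\), hence diagonalizable of finite type. The subgroup \(B^1\cong\check{T}/\check{T}^\Gamma\) has character group \(\ker(M\to M_\Gamma)\), where \(M=X^*(\check{T})\). This is a subgroup of the finitely generated free group \(M\), hence free. Dualizing the exact sequence \(1\to B^1\to Z^1\to\Hom(T(E),\bbG_m)\to 1\) gives a short exact sequence of character groups ending in this free group, and any such sequence splits. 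A splitting yields a group-scheme retraction \(Z^1\to B^1\). Equivalently, it yields a homomorphic section of \(Z^1\to\Hom(T(E),\bbG_m)\), which is what we want. Here \(\Hom(T(E),\bbG_m)\) is only a countable union of finite-type pieces, so I would either run the argument on each level of the standard filtration or note directly that the retraction onto \(B^1\) exists.

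The main obstacle is making the finiteness and diagonalizability of \(Z^1\) precise. One must pass from \(W_E\) to a finitely presented discretized quotient on which \(Z^1\) is unchanged, and check that the closed subgroup it defines is diagonalizable over an arbitrary \(\Lambda\). Diagonalizability holds because closed subgroups of diagonalizable groups cut out by character equations are diagonalizable, with character group the corresponding quotient. One must also check that the identification \(Z^1/B^1\cong\Hom(T(E),\bbG_m)\) of \cite[Lemma 4.1.4.]{categorical-fargues-tori} is compatible with this group structure. I would verify that compatibility first, before attempting the splitting.
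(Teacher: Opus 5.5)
Your argument is correct. It follows a different route from the paper, and in fact proves more.

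\textbf{The paper's route.} The paper uses the hypothesis \(H^1(\Gamma,T)=0\), which is standing in that section though not restated in the lemma. This hypothesis makes \(\check{T}^\Gamma\) a torus. Hence \(X^*(\check{T})\to X^*(\check{T})_\Gamma=X^*(\check{T}^\Gamma)\) splits, giving a retraction \(\check{T}\to\check{T}^\Gamma\). The composite \(\locsys{W_E}{\check{T}}\to B\check{T}\to B\check{T}^\Gamma\) is then the identity on bands, and paired with the coarse map it gives the isomorphism. In other words, the paper splits the group extension \(1\to\check{T}^\Gamma\to\check{T}\to B^1\to 1\) and needs nothing about \(Z^1\) beyond the cited gerbe statement.

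\textbf{Your route.} You instead trivialize the \(B^1\)-torsor \(Z^1\to\Hom(T(E),\bbG_m)\). The gerbe class is the image of this torsor under the boundary map of the extension above. Since \(B^1\) is the image of a torus, \(X^*(B^1)=\ker(X^*(\check{T})\to X^*(\check{T})_\Gamma)\) is free for every \(T\). So your argument never uses \(H^1(\Gamma,T)=0\), and it splits the gerbe for all tori. For instance, for the unramified norm-one torus one has \(H^1(\Gamma,T)=\bbZ/2\), yet the tame part of \(\locsys{W_E}{\check{T}}\) is \(\mu_{q+1}\times B\mu_2\).

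\textbf{What your route costs.} You need more structural input on \(Z^1\).
\begin{itemize}
\item Each \(Z^1(W_E/P,\check{T})\) must be exhibited as the kernel of a homomorphism \(\check{T}^n\to\check{T}^m\), via a finitely presented discretization.
\item You must handle the exhaustion by \(P\). Your phrase that wild inertia acts through a finite quotient is imprecise: a cocycle factors through \(W_E/P\) only for some \(P\) depending on the cocycle. This is harmless because the pieces are open and closed. You can trivialize the torsor over each open and closed affine piece of the base. Alternatively, lift a splitting of \(X^*(Z^1(W_E/P))\to X^*(B^1)\) along the surjections \(X^*(Z^1(W_E/P'))\to X^*(Z^1(W_E/P))\) for \(P'\subset P\), using that \(X^*(B^1)\) is free.
\end{itemize}

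\textbf{A step you can drop.} The compatibility with group structures that you plan to verify first is not needed. The cited map factors through the fppf quotient \(Z^1/B^1\). Both \(Z^1/B^1\) and \(\Hom(T(E),\bbG_m)\) are bases of the same \(\check{T}^\Gamma\)-gerbe \(\locsys{W_E}{\check{T}}\). Hence the induced map \(Z^1/B^1\to\Hom(T(E),\bbG_m)\) is an isomorphism of schemes, and that is all your splitting uses.
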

\begin{proof}
    Note that under the assumption that \(H^1(\Gamma,T)=0\), the map \(\check{T}^\Gamma\injto\check{T}\) admits a retraction, as \(X^*(\check{T}^\Gamma)\) is a free \(\bbZ\)-module of finite rank.
    Then the maps \(\locsys{W_E}{\check{T}}\to\Hom(T(E),\bbG_m)\) and \(\locsys{W_E}{\check{T}}\to B\check{T}\to B\check{T}^\Gamma\) give rise to a map \(\locsys{W_E}{\check{T}}\to\Hom(T(E),\bbG_m)\times B\check{T}^\Gamma\), which one easily verifies to be an isomorphism.
\end{proof}
\begin{remark}\label{rem: enough to give incoherent actions}
    Let us continue the discussion in \cref{rem: 2QCoh(BD) in the free case explicit description}.
    Let \(M_2\) be a free finitely generated abelian group with basis \(m_1,\dots,m_n\).
    Then giving a presentable category \(\mathcal{C}\) with an action of \(\QCoh(D_1\times BD_2)\) is equivalent to giving a \(\oo(D_2)\)-linear structure on \(\mathcal{C}\) together with \(\oo(D_2)\)-linear endofunctors \(F_i\) on \(\mathcal{C}\) attached to each \(m_i\).
    When \(T\) is an satisfies \(H^1(\Gamma,T)\), then \(\locsys{W_E}{\check{T}}\) is precisely of the form \(D_1\times BD_2\) as before.
\end{remark}

%% file: geometric-preparations.tex
\section{Geometric preparations}
Let \(T\) be a torus, such that \(H^1(\Gamma,T)=0\).
By Nakayama-Tate duality, this is equivalent to \(\pi_0(Z(\check{T})^\Gamma)\) being trivial, equivalently that \(B(T)\) is torsion free.
\begin{lemma}\label{lem: splitting BunT in the induced case}
    We have an identification \([*/T(E)]\times X_*(T)_\Gamma\cong\Bun_T\).
\end{lemma}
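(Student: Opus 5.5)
We want \([*/T(E)]\times X_*(T)_\Gamma\cong\Bun_T\) for a torus \(T\) with \(H^1(\Gamma,T)=0\). The plan is to use the known structure of \(\Bun_T\) for tori from \cite{geometrization}. By \cite[Proposition III.5.3]{geometrization} and the discussion of tori there, \(\Bun_T\) decomposes as a disjoint union over \(b\in B(T)\) of the strata \(\Bun_T^b\), and each stratum is a classifying stack. Since \(T\) is commutative, every \(b\) is basic and \(T_b=T\). Hence \(\Bun_T^b\cong[*/\widetilde{T}_b]\), where \(\widetilde{T}_b\) is an extension of \(T(E)\) by a unipotent group diamond. For a torus all slopes of \(\mathrm{ad}\) vanish, so this unipotent part is trivial and \(\Bun_T^b\cong[*/T(E)]\). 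Because all points are basic, each stratum is open and closed, so
\begin{equation*}
    \Bun_T\cong\coprod_{b\in B(T)}[*/T(E)].
\end{equation*}

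The second step is to identify the index set. By Kottwitz, \(\kappa\from B(T)\to X^*(\check{T}^\Gamma)=X_*(T)_\Gamma\) is a bijection for tori. So it only remains to replace each component \([*/T(E)]\) by a single fixed copy in a way compatible with the indexing. As stated the identification is only an isomorphism of stacks, and the disjoint union already gives \([*/T(E)]\times X_*(T)_\Gamma\), with \(X_*(T)_\Gamma\) viewed as a discrete set.

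For the rest of the paper, however, one will want this identification to be compatible with the group structure of \(\Bun_T\), namely \(\Bun_T\cong[*/T(E)]\times X_*(T)_\Gamma\) as Picard stacks. This is where the hypothesis \(H^1(\Gamma,T)=0\) enters. It makes \(B(T)\cong X_*(T)_\Gamma\) torsion free, hence a finitely generated free abelian group. The extension of Picard groupoids \(B T(E)\to\Bun_T\to X_*(T)_\Gamma\), where \(\Bun_T\to X_*(T)_\Gamma\) is \(\pi_0\), splits once one chooses, for each basis element \(\lambda_i\) of \(X_*(T)_\Gamma\), a \(T\)-bundle \(\mathcal{E}_{\lambda_i}\) (a point of \(\Bun_T^{b_i}\)) and extends multiplicatively, \(\mathcal{E}_{\sum n_i\lambda_i}\defined\bigotimes\mathcal{E}_{\lambda_i}^{\otimes n_i}\).

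On a free abelian group there are no further coherence constraints, so this gives a section \(X_*(T)_\Gamma\to\Bun_T\) of Picard stacks. The splitting is then the map \((\mathcal{F},\lambda)\mapsto\mathcal{F}\otimes\mathcal{E}_\lambda\) from \([*/T(E)]\times X_*(T)_\Gamma\), using that \([*/T(E)]=\Bun_T^1\) acts by twisting. This map is an isomorphism on each connected component by the first step.

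I expect the main obstacle to be the first step, i.e.\ verifying that each stratum is exactly \([*/T(E)]\) with no extra unipotent or gerby structure. Here I would quote \cite[Proposition III.5.3, Example III.5.4]{geometrization} directly: for basic \(b\) one has \(\Bun_G^b\cong[*/G_b(E)]\), and \(T_b=T\) for tori.
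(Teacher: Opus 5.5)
Your proposal is correct and follows the same route as the paper. The paper's proof is just a citation of \cite[Example III.4.6.]{geometrization} together with the torsion-freeness of \(B(T)\cong X_*(T)_\Gamma\), and your argument unpacks that citation: all points are basic, each stratum is \([*/T(E)]\), and the Kottwitz map is a bijection. You also use torsion-freeness to make the splitting compatible with the Picard structure, which is exactly what is needed later when the convolution problem in \cref{lem: cartier dual BunT QCoh(ParT)} is split into a \([*/T(E)]\) part and an \(X_*(T)_\Gamma\) part.
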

\begin{proof}
    As \(B(T)\) is torsion free, this is \cite[Example III.4.6.]{geometrization}.
\end{proof}
\begin{lemma}\label{lem: qcoh and constructibel on anima same}
    Let \(\An\to\Stk^\alg, X\mapsto X^{\alg}\) be constructed by mapping \(*\) to \(\Spec(\Lambda)\) and extending via colimits.
    Then \(\QCoh(X^\alg)\) identifies with \(\D(X\times\Spd(\overline{\bbF_q}))\) as 6-functor formalisms on anima, where we only consider truncated morphisms as \(!\)-able.
\end{lemma}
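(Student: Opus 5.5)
The approach is to reduce both sides to the value on a point and then extend by colimits. Both \(X\mapsto\QCoh(X^\alg)\) and \(X\mapsto\D(X\times\Spd(\overline{\bbF_q}))\) send colimits of anima to limits of categories under \(*\)-pullback. For \(\QCoh\) this holds because \(\QCoh\) on \(\Stk^\alg\) is defined by right Kan extension from affines, so it converts colimits of prestacks into limits. For \(\D\) it holds by \(v\)-descent, or more precisely by descent for the constant-stack functor \(\An\to\vStk\), \(X\mapsto\underline{X}\times\Spd(\overline{\bbF_q})\), which preserves colimits. The 6-functor formalisms in question are determined by their values on the point together with a suitable extension procedure. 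So the plan is to match the two formalisms on \(*\) and on the category of sets, and then invoke the uniqueness of the extension of 6-functor formalisms along colimits (\cite[Theorem 3.4.11]{heyer-mann}), applied with the class of truncated maps as \(!\)-able.

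Concretely, the steps are as follows.

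First, on the point: \(\QCoh(\Spec\Lambda)=\Mod_\Lambda\). On the other side, \(\D(\Spd(\overline{\bbF_q}))=\D_{\mathrm{mot}}(\Spd\overline{\bbF_q})\otimes_{\D_{\mathrm{mot}}(\Spd\overline{\bbF_q})}\Mod_\Lambda=\Mod_\Lambda\) by the definition in the Notation.

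Second, on a set \(S\), viewed as a discrete anima: \(S^\alg=\coprod_S\Spec\Lambda\), so \(\QCoh(S^\alg)=\prod_S\Mod_\Lambda\). Likewise \(\underline{S}\times\Spd\overline{\bbF_q}\) is a disjoint union of copies of \(\Spd\overline{\bbF_q}\), so \(\D\) of it is again \(\prod_S\Mod_\Lambda\).

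Third, for maps of sets, \(f^*\) is restriction on both sides and \(f_!\) is the direct sum over fibres on both sides. For \(\QCoh\) this holds because all maps of affines are in \(P\). For \(\D\) it holds because such maps are étale, so \(f_!\) is left adjoint to \(f^*\).

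Together these steps give an equivalence of 6-functor formalisms on the site of sets (equivalently, \(0\)-truncated anima) with all maps \(!\)-able. The result then follows by extending to all anima.

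I expect the main obstacle to be the extension step. Truncated maps of anima include maps such as \(BG\to *\) for \(G\) a discrete group, and there the \(!\)-pushforward must be computed by descent along \(*\to BG\). Concretely, \(f_!\) should come out as group homology on both sides. On the \(\QCoh\) side this is because \(*\to BG\) lies in the class where \(!\) equals the left adjoint of pullback. On the \(\D\) side it is because \(*\to[*/\underline{G}]\) is cohomologically smooth of dimension \(0\), being étale and locally \(\ell\)-cohomologically smooth, so \(f_!=f_\natural\).

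To handle this I would first check that in both formalisms every map \(*\to X\) with \(X\) a truncated anima is "\(!\)-able with \(f^!=f^*\)". I would then apply the Heyer--Mann extension criterion: a 6-functor formalism is determined by its restriction to a subsite once descent along a cover by such maps is known. The truncatedness hypothesis is exactly what makes the iterated diagonals eventually isomorphisms, so the inductive construction terminates.

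If the abstract uniqueness theorem turns out to be awkward to invoke, the fallback is to build the equivalence directly via \(\QCoh(X^\alg)\simeq\Fun(X,\Mod_\Lambda)\simeq\D(X\times\Spd\overline{\bbF_q})\). Both identifications are limit-preserving in \(X\), so this reduces to checking that \(!\)-pushforward corresponds to left Kan extension along truncated maps on both sides.
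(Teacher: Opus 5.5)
Your proposal is correct and takes essentially the paper's route. The paper's entire proof is the observation that both \(\QCoh((-)^\alg)\) and \(\D(-\times\Spd(\overline{\bbF_q}))\) are obtained, via the Heyer--Mann extension with truncated maps \(!\)-able, from the 6-functor formalism on \(C=*\) sending \(*\) to \(\Mod_\Lambda\); this is exactly your reduction to the point plus uniqueness of the extension, and your separate matching on sets and explicit treatment of \(BG\to *\) are extra detail that this uniqueness already covers.
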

\begin{proof}
    Both are obtained by passage to stacks from the 6-functor formalism on \(C=*\) that sends \(*\) to \(\Mod_\Lambda\).
\end{proof}
\begin{lemma}\label{lem: cartier duality constructible on M}
    Let \(M\) be a finitely generated abelian group, let \(D=\Hom(M,\bbG_m)\).
    Then \(\D(M\times\Spd(\overline{\bbF_q}))\) with the convolution monoidal structure identifies with \(\QCoh(BD)\) with its usual monoidal structure.
    Similarly, \(\D(B(M\times\Spd(\overline{\bbF_q})))\) with the convolution monoidal structure identifies with \(\QCoh(D)\) with its usual monoidal structure.
\end{lemma}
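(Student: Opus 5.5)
The plan is to reduce both claims to the spectral statement \cref{lem: cartier duality on qcoh} via the identification of 6-functor formalisms in \cref{lem: qcoh and constructibel on anima same}.

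\textbf{First claim.} Regard \(M\) as a discrete anima, i.e.\ a set with its group structure. By \cref{lem: qcoh and constructibel on anima same}, \(\D(M\times\Spd(\overline{\bbF_q}))\) identifies with \(\QCoh(M^\alg)\), where \(M^\alg=\coprod_{m\in M}\Spec(\Lambda)\) is the constant group scheme \(\underline{M}\). The multiplication \(m\from M\times M\to M\) is a map of discrete sets, hence truncated and \(!\)-able. The identification is one of 6-functor formalisms, so it is compatible with \(!\)-pushforward and external tensor products. It therefore carries the convolution structure \(A\star B=m_!(A\boxtimes B)\) on the constructible side to the convolution structure on \(\QCoh(\underline{M})\). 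Because \(m\) is a map between disjoint unions of points, the \(!\)-pushforward in \(\QCoh\) is just the sum over fibres, so no subtlety arises. This is exactly the convolution category appearing in \cref{lem: cartier duality on qcoh}, which is monoidally equivalent to \(\QCoh(BD)\) with its usual tensor product.

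\textbf{Second claim.} Apply the same argument to the anima \(BM\). Since \(X\mapsto X^\alg\) preserves colimits and \(BM=\colim_{\Delta^\op}M^n\), we have \((BM)^\alg=B\underline{M}\), the classifying stack of the constant group scheme. The multiplication \(BM\times BM\to BM\), which exists because \(M\) is abelian, is \(1\)-truncated and hence \(!\)-able in both formalisms. Again \cref{lem: qcoh and constructibel on anima same} matches the two convolution structures, and \cref{lem: cartier duality on qcoh} identifies \(\QCoh(B\underline{M})\) with convolution with \(\QCoh(D)\) with its usual monoidal structure.

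\textbf{Main obstacle.} The only real point is to check that the equivalence of \cref{lem: qcoh and constructibel on anima same} upgrades to a monoidal equivalence of convolution structures. This requires it to be an equivalence of 6-functor formalisms on a class of maps containing \(m\), and compatible with products. I would argue this as follows.

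1. Both 6-functor formalisms are obtained by the same extension procedure from the formalism on \(*\) that sends \(*\) to \(\Mod_\Lambda\).
2. Convolution is built functorially from the symmetric monoidal structure of the formalism, via its correspondence-category incarnation, applied to the commutative monoid \(M\) (respectively \(BM\)) in anima.
3. \(X\mapsto X^\alg\) preserves finite products, since \(\Spec(\Lambda)\times_{\Spec\Lambda}\Spec(\Lambda)=\Spec(\Lambda)\) and products commute with the colimits in each variable.

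Hence the commutative monoid structure is transported, and the two convolution structures agree. If one prefers to avoid this formal argument, one can also use the \(\ell\)-adic/Betti-style description \(\D(BM\times\Spd\overline{\bbF_q})\simeq\Mod_{\Lambda[M]}\) and compare directly. Note that \(\Lambda[M]=\oo(D)\) holds as a Hopf algebra, whose convolution corresponds to the tensor product on \(\QCoh(D)\).
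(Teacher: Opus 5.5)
Your proposal is correct and follows the paper's proof. You identify \(\D(M\times\Spd(\overline{\bbF_q}))\) (resp.\ \(\D(B(M\times\Spd(\overline{\bbF_q})))\)) with \(\QCoh\) of the constant group scheme (resp.\ its classifying stack) via \cref{lem: qcoh and constructibel on anima same}, and then invoke \cref{lem: cartier duality on qcoh}; the paper does exactly this in two lines, taking for granted that the two convolution structures match, whereas you justify that step through the correspondence-category formulation and the fact that \(X\mapsto X^\alg\) preserves finite products over \(\Lambda\).
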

\begin{proof}
    By \cref{lem: qcoh and constructibel on anima same}, \(\D(M\times\Spd(\overline{\bbF_q}))=\QCoh(M)\), with the convolution monoidal structure.
    The claim is now \cref{lem: cartier duality on qcoh}.
\end{proof}
\begin{theorem}\label{lem: cartier dual BunT QCoh(ParT)}
    The category \(\D(\Bun_T)\) with the convolution monoidal structure identifies with \(\QCoh(\locsys{W_E}{\check{T}})\) and its usual monoidal structure.
    Under this equivalence, \(\D(\Bun_T^0)\) and \(\QCoh(\locsyscoarse{W_E}{\check{T}})\) agree.
    More precisely, each compact open subgroup \(K\subset T(E)\), we get open substacks \(\locsyscoarsesub{W_E}{\check{T}}{K}\) and we have an equivalence of symmetric monoidal categories
    \begin{equation*}
        \QCoh(\locsyscoarse{W_E}{\check{T}})=\lim_{K\subset T(E)}\QCoh(\locsyscoarsesub{W_E}{\check{T}}{K})\simeq\lim_{K\subset T(E)}\D([*/(T(E)/K)])=\D(\Bun_T^0).
    \end{equation*}
    In addition, this equivalence is \(t\)-exact.
\end{theorem}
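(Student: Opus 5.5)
Both sides split as products, and the plan is to match the factors. On the geometric side, \cref{lem: splitting BunT in the induced case} gives \(\Bun_T\cong[*/T(E)]\times X_*(T)_\Gamma\). Since \(T\) is commutative, the multiplication \(m\) on \(\Bun_T\) is the product of the group structures on \([*/T(E)]\) and on the discrete group \(X_*(T)_\Gamma\). I first check that this identification is one of group stacks, not just of stacks. This should follow from the construction of the splitting in \cite[Example III.4.6.]{geometrization}: for \(H^1(\Gamma,T)=0\) the splitting comes from a section \(X_*(T)_\Gamma\to B(T)\) of group objects, obtained by choosing a basis of the free module \(X_*(T)_\Gamma\).

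Granting this, Künneth for \(\D\) applies, as in \cref{cor: spectral action H1 vanishing torus}, after noting that the factor \(X_*(T)_\Gamma\times\Spd(\overline{\bbF_q})\) is discrete. This gives a monoidal equivalence
\begin{equation*}
(\D(\Bun_T),\star)\simeq(\D([*/T(E)]),\star)\otimes(\D(X_*(T)_\Gamma\times\Spd(\overline{\bbF_q})),\star).
\end{equation*}
By \cref{lem: cartier duality constructible on M} with \(M=X_*(T)_\Gamma\) (finitely generated and free), the second factor is \(\QCoh(B\check{T}^\Gamma)\). Indeed, \(\Hom(X_*(T)_\Gamma,\bbG_m)=\check{T}^\Gamma\) because \(X^*(\check{T}^\Gamma)=X^*(\check{T})_\Gamma=X_*(T)_\Gamma\). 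On the spectral side, \cref{lem: splitting gerbe} gives \(\locsys{W_E}{\check{T}}\cong\Hom(T(E),\bbG_m)\times B\check{T}^\Gamma\), and \(\QCoh\) of this product is the tensor product of the two \(\QCoh\) categories by \cite{integral-transforms}. It therefore suffices to prove the \(\Bun_T^0\) statement, namely \((\D([*/T(E)]),\star)\simeq\QCoh(\Hom(T(E),\bbG_m))\) with the pointwise tensor product, where \(\Hom(T(E),\bbG_m)\) is the coarse quotient \(\locsyscoarse{W_E}{\check{T}}\).

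For this I write \(T(E)=\colim_K\), over compact open \(K\). For each \(K\), \(T(E)/K\) is a finitely generated abelian group (discrete, of the form \(\bbZ^r\times\) finite). Set \(\locsyscoarsesub{W_E}{\check{T}}{K}=\Hom(T(E)/K,\bbG_m)\), the open and closed union of components where the character is trivial on \(K\). Smooth representations of \(T(E)/K\) are \(\D([*/(T(E)/K)])\), which is \(\D(B(M\times\Spd\overline{\bbF_q}))\) for \(M=T(E)/K\). \cref{lem: cartier duality constructible on M} identifies this with \(\QCoh(\Hom(T(E)/K,\bbG_m))\), taking convolution to the tensor product. The part of this step needing care is the finite factor of \(T(E)/K\): the lemma is stated for arbitrary finitely generated \(M\), so the pro-\(p\) and torsion pieces cause no trouble integrally. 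Next I pass to the limit. Since \(\D([*/T(E)])=\colim_K\) in \(\Pr\) along inflations, this is also \(\lim_K\) along \(K\)-invariants. On the spectral side, \(\QCoh\) of the increasing union of the open and closed pieces is the limit along restrictions. It remains to check that the Cartier duality equivalences are compatible with these transition maps. Under the equivalences, inflation along \(T(E)/K'\to T(E)/K\) corresponds to pushforward along the closed-open inclusion, and its right adjoint \(K\)-invariants corresponds to restriction.

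I expect the main obstacle to be this naturality in \(K\), together with the group-stack compatibility in the first step. I would verify it by making the equivalence of \cref{lem: cartier duality on qcoh} natural in \(M\): a morphism \(M\to M'\) induces \(\check D'\to\check D\), and the equivalences should intertwine \(f_!\) on the \(M\)-side with pullback, and \(f^*\) with pushforward. Finally, \(t\)-exactness. Each Cartier equivalence sends an \(M\)-representation \(V\) to the comodule \(V\) over \(\Lambda[M]\), which is the same underlying module, so it is \(t\)-exact. On the \(X_*(T)_\Gamma\) factor, the delta sheaves go to line bundles on \(B\check{T}^\Gamma\) given by characters, so that factor is \(t\)-exact as well.
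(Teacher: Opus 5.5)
Your proposal is correct and follows essentially the same route as the paper: split both sides via \cref{lem: splitting BunT in the induced case} and \cref{lem: splitting gerbe}, match the \(X_*(T)_\Gamma\)-factor with \(\QCoh(B\check{T}^\Gamma)\) and the \([*/T(E)]\)-factor level by level in \(K\) using \cref{lem: cartier duality constructible on M}, and check \(t\)-exactness at finite level. Two small corrections: with integral coefficients you should let \(K\) run only through the cofinal system of pro-\(p\) compact open subgroups, because otherwise (e.g.\ \(K=\oo_E^\times\) with \(\ell\mid q-1\)) \(\locsyscoarsesub{W_E}{\check{T}}{K}\) is only a closed substack, not an open one, and the \(K\)-invariants transition functors are not \(t\)-exact; and in the duality \(\D([*/M])\simeq\QCoh(\Hom(M,\bbG_m))\) an \(M\)-representation corresponds to a \(\Lambda[M]\)-module rather than a comodule, which does not affect your \(t\)-exactness argument.
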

\begin{proof}
    We may base change \(\Bun_T\) to \(\Spa(C)\) without changing the category of sheaves.
    In addition, as in the proof of \cref{lem: splitting BunT in the induced case}, the problem splits into a \([*/T(E)]\) and a \(X_*(T)_{\Gamma}\) part.
    On the other side, \(\locsys{W_E}{\check{\bbG_m}}=\Hom(T(E),\bbG_m)\times B\check{T}^\Gamma\) by \cref{lem: splitting gerbe}.
    That \(\D(X_*(T)_{\Gamma}\times\Spa(C))\) matches up with \(\QCoh(B\check{T}^\Gamma)\) is \cref{lem: cartier duality constructible on M}.
    We are left with comparing \(\D([*/T(E)])\) and \(\QCoh(\Hom(T(E),\bbG_m))\).
    This follows from \cref{lem: cartier duality constructible on M}. 

    For the claim about \(t\)-structures, it suffices to check that the equivalence
    \begin{equation*}
        \QCoh(\locsyscoarsesub{W_E}{\check{T}}{K})\simeq\D([*/(T(E)/K)])
    \end{equation*}
    is \(t\)-exact, as the transition maps induced from changing \(K\) are \(t\)-exact on both sides.
    On the left hand side, the connective part is generated by the structure sheaf \(\oo\), on the left hand side by the \(T(E)/K\)-representation \(\cind_K^{T(E)}\Lambda\).
    The equivalence of these two categories matches these two objects, so the equivalence is also \(t\)-exact.
\end{proof}

%% file: spectral-description-shriek-convolution.tex
\section{Spectral description of \texorpdfstring{\(!\)}{!}-convolution}
Let \(G\) be a reductive group over \(E\) and \(D\) be a torus that satisfy \(H^1(\Gamma,D)=0\) that is a subgroup of the center of \(G\).
Let us write \(G'=G/D\).
In this case, we obtain a sequence of stacks
\begin{equation*}
    0\to\Bun_D\to\Bun_G\to\Bun_{G'}\to 0,
\end{equation*}
in particular \(\Bun_D\) acts on \(\Bun_G\).
This action gives rise to an action of \(\D(\Bun_D)\) on \(\D(\Bun_G)\), where we equip \(\D(\Bun_D)\) with the convolution monoidal structure.
Explicitly, consider the diagram
\begin{equation*}
    \begin{tikzcd}
       & \Bun_D\times\Bun_G \arrow[rd, "a"] \arrow[r, "\pr_1"] \arrow[ld, "\pr_2"'] & \Bun_D \\
\Bun_G &                                                                            & \Bun_G
\end{tikzcd}
\end{equation*}
Then the action of some \(A\in\D(\Bun_D)\) on some \(M\in\D(\Bun_G)\) is given by 
\begin{equation*}
    A\star M\defined a_!(\pr_1^*A\otimes\pr_2^*M).
\end{equation*}
The unit of this action is given by \(\mathcal{W}_D\defined e_!\Lambda\), where \(e\from \Spd(\overline{\bbF_q})\to\Bun_D\) selects the trivial \(D\)-bundle.
One computes as in \cite[Proposition 5.4.4.]{heyer-mann}, that this is identified with \(\cind_*^{D(E)}\Lambda\), which is precisely the Gelfand-Graev representation attached to the unique Whittaker datum of \(D\).
By \cref{lem: cartier dual BunT QCoh(ParT)}, this is also an action of \(\QCoh(\locsys{W_E}{\check{D}})\) on \(\Bun_G\).
Let us call this the \defword{geometric \(\QCoh(\locsys{W_E}{\check{D}})\)-action}.%

On the Langlands dual side, we have a morphism of stacks \(\locsys{W_E}{\check{G}}\to\locsys{W_E}{\check{D}}\), thus the spectral action of \(\Ind\Perf(\locsys{W_E}{\check{G}})\) induces an action of \(\Ind\Perf(\locsys{W_E}{\check{D}})\) on \(\D(\Bun_G)\).
Observe that \(\QCoh(\locsys{W_E}{\check{D}})\) is compactly generated by perfect complexes with quasi-compact support, thus we have a monoidal embedding \(\QCoh(\locsys{W_E}{\check{D}})\to \Ind\Perf(\locsys{W_E}{\check{D}})\), giving rise to an action of \(\QCoh(\locsys{W_E}{\check{D}})\) on \(\D(\Bun_G)\).
Let us call this the \defword{spectral \(\QCoh(\locsys{W_E}{\check{D}})\)-action}.%
The goal is the following theorem.
\begin{theorem}\label{thm: geometric convolution action is spectral action}
    The geometric and the spectral \(\QCoh(\locsys{W_E}{\check{D}})\)-action agree.
\end{theorem}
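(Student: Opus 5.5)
Both actions are \(\QCoh(\locsys{W_E}{\check{D}})\)-module structures on \(\D(\Bun_G)\) in \(\Pr\), so I will compare them through the explicit description of such structures. By \cref{lem: splitting gerbe} we have \(\locsys{W_E}{\check{D}}\cong\Hom(D(E),\bbG_m)\times B\check{D}^\Gamma\), which is of the form \(D_1\times BD_2\) with \(M_2=X^*(\check{D}^\Gamma)\) free. By \cref{cor: spectral action H1 vanishing torus} and \cref{rem: enough to give incoherent actions}, a \(\QCoh(D_1\times BD_2)\)-module structure is the same as an \(\oo(D_1)\)-linear structure together with finitely many \(\oo(D_1)\)-linear endofunctors \(F_i\), one for each basis vector \(m_i\), with no further coherences. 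Here \(D_1=\Hom(D(E),\bbG_m)\), and the \(\oo(D_1)\)-linear structure is the one coming from \(\QCoh(D_1)=\QCoh(\locsyscoarse{W_E}{\check{D}})\). Since both actions restrict along \(\QCoh(D_1)\to\QCoh(D_1\times BD_2)\) and along the line bundles on \(BD_2\), the theorem reduces to two comparisons.

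\textbf{First comparison: the \(\QCoh(D_1)\)-linear structures.} Under \cref{lem: cartier dual BunT QCoh(ParT)}, \(\QCoh(D_1)\) corresponds to \(\D(\Bun_D^0)=\D([*/D(E)])\), acting by convolution with \(D(E)\)-representations, i.e.\ through the action of the Bernstein-type center \(\oo(D_1)=\Lambda[D(E)]\)-linearly on smooth representations. On the spectral side, \(\QCoh(\locsyscoarse{W_E}{\check{D}})\) acts through the excursion algebra via pullback \(\oo(\locsyscoarse{W_E}{\check{D}})\to\oo(\locsyscoarse{W_E}{\check{G}})\). The agreement of these two actions of \(\oo(D_1)\) is exactly the compatibility of the Fargues--Scholze construction with central characters, \cite[Theorem IX.0.5.(iii)]{geometrization}, in its categorical form. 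This form says that the excursion operators pulled back from \(\check{D}\) act on \(\D(\Bun_G)\) via the \(D(E)\)-action through the central map \(\Bun_D\times\Bun_G\to\Bun_G\). I would check this on the generators \(\oo(D_1)\) and use that an \(\oo(D_1)\)-linear structure on a presentable stable category is determined by the \(E_2\)-map to the center. Proving that this \(E_2\)-map agrees with the one induced by convolution with \(D(E)\) is the point I expect to be the main obstacle. My first attempt would be to reduce, via functoriality of the excursion algebra under \(G\times D\to G\), to the torus case, where it is \cite{categorical-fargues-tori}.

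\textbf{Second comparison: the endofunctors \(F_i\).} On the spectral side, \(F_i\) is the action of the line bundle on \(B\check{D}^\Gamma\) given by the character \(m_i\). Via the retraction \(\check{D}\to\check{D}^\Gamma\), this is the pullback of a representation \(V_i\) of \(\check{D}\), hence of \(\check{G}\) extended by the central character. Its spectral action is the Hecke operator \(T_{V_i}\), which for a central cocharacter \(\mu_i\) of \(D\) is just the action of the corresponding degree-\(\mu_i\) point of \(\Bun_D\) (a minuscule modification by a central cocharacter). On the geometric side, by \cref{lem: cartier duality constructible on M} the same line bundle corresponds to the skyscraper at \(\mu_i\in X_*(D)_\Gamma\subset\lvert\Bun_D\rvert\), pushed forward along the splitting of \cref{lem: splitting BunT in the induced case}. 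Convolution with it is translation by that \(D\)-bundle. To identify the two, I would compute the Hecke correspondence for \(V_i\): since \(\mu_i\) is central, the Hecke stack \(\Hck_{\mu_i}\) is isomorphic to \(\Bun_G\times\Div^1\), and \(T_{V_i}\) is the action of the \(D\)-bundle \(\mathcal{O}(\mu_i)\). This gives a natural equivalence \(F_i^{\mathrm{geom}}\simeq F_i^{\mathrm{spec}}\), up to checking Tate twist and shift normalizations, which the \(\sqrt q\) choice handles. It also has to be \(\oo(D_1)\)-linear, which follows from the first comparison because Hecke operators commute with excursion operators.

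Since \cref{rem: enough to give incoherent actions} asserts that no further coherence data are needed, these two comparisons together give the equivalence of the two module structures.
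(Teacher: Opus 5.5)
Your reduction matches the paper's. Via \cref{lem: splitting gerbe} and \cref{rem: enough to give incoherent actions}, you compare only the \(\QCoh(\locsyscoarse{W_E}{\check{D}})\)-linear structures and the line bundles pulled back from \(B\check{D}^\Gamma\). Your second comparison is in substance \cref{lem: compatible on line bundles}: you compute directly on the central Hecke stack of \(G\), while the paper goes through \(\Hck_{D\times G}\cong\Hck_D\times\Hck_G\) and base change to get \(z^*\oo(\chi)*-\simeq(\oo(\chi)*\mathcal{W}_D)\star-\).

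The gap is the first comparison, which you leave as the ``main obstacle''. Nothing in the proposal proves it, and your second computation cannot be reused for it, because you only identify the bare endofunctor after forgetting the Weil action. Your identification of \(T_{V_i}\) with translation by \(\oo(\mu_i)\) is correct on underlying functors. Over \(\Div^1\), however, the family \(x\mapsto\oo_D(\mu_i\cdot x)\) is not constant. It is twisted by the \(D(E)\)-torsor coming from local class field theory, and this twist is exactly the information that carries the excursion operators.

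The paper closes this by proving the line-bundle identity functorially in the finite set \(I\) and compatibly with the \(W_E^I\)-actions. Excursion operators are assembled from Hecke operators with legs together with their Weil actions. So this identity shows that \(\Exc(D)\to\Exc(G)\to\End(\id_{\D(\Bun_G)})\) agrees with \(\Exc(D)\to\End(\mathcal{W}_D)\) followed by \(\mathcal{W}_D\star-\simeq\id\) (\cref{lem: spectral and whittaker endomorphism action agree}). At each finite level \(K\), both actions then factor through \(\D([*/(D(E)/K)])\), where the torus case identifies them, and one passes to the limit over \(K\) (\cref{cor: compatible on coarse moduli}).

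So your suggested reduction along \(D\times G\to G\) is the right one, but it has to be carried out for Hecke operators with legs and their Weil actions. The compatibility with central characters of irreducible representations is far too weak to give it. Note also that the dependency runs opposite to yours: the paper deduces the coarse comparison from the line-bundle computation, not the other way round.

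Finally, a caveat you share with the paper. \cref{rem: enough to give incoherent actions} is only literally correct when \(X_*(D)_\Gamma\) has rank at most one, as for \(\PGL_n\). Since \(B\bbZ^n\) is an \(n\)-torus, for \(n\geq 2\) an action also includes commutativity isomorphisms \(F_iF_j\simeq F_jF_i\) and higher coherences. These must be matched too, for instance via the \(I=\{1,2\}\) case of the equivariant computation together with fusion.
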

Combining \cref{lem: splitting gerbe} and \cref{rem: enough to give incoherent actions}, the problem of understanding the spectral and geometric action decomposes into understanding the action of \(\QCoh(\Hom(D(F),\bbG_m))\) and the action of \(\QCoh(BD^\Gamma)\) separately.
Let us start with the line bundles in general.

For this, consider a character \(\chi\) of \(\check{D}\) identified with a line bundle on \(B\check{D}\) and let \(\oo(\chi)\) denote the sheaf on \(\QCoh(\locsys{W_E}{\check{D}})\) given by pulling back \(\chi\) along \(\locsys{W_E}{\check{D}}\to B\check{D}\).
Let \(z\from\locsys{W_E}{\check{G}}\to\locsys{W_E}{\check{D}}\) denote the map induced by \(\check{D}\subset\check{G}\).
The desired compatibility is the following.
\begin{lemma}\label{lem: compatible on line bundles}
    There is an equivalence of functors \((\oo(\chi)*\mathcal{W}_D)\star -\) and \(z^*\oo(\chi)*-\) for \(\chi\in\Rep(\check{D}^I)\) for irreducible \(\chi\), functorial in \(\chi\), \(I\) and compatible with the Weil group action.
\end{lemma}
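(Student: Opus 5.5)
The plan is to reduce both sides of \cref{lem: compatible on line bundles} to Hecke operators, and then compare the Hecke operators for \(D\) and for \(G\) geometrically.

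\emph{Step 1: identify the spectral side with a Hecke operator.} Since \(D\) is central in \(G\), dualizing \(D\injto G\) gives a surjection \(\check{G}\to\check{D}\), compatible with the \(\Gamma\)-actions. The map \(z\from\locsys{W_E}{\check{G}}\to\locsys{W_E}{\check{D}}\) is induced by this map on coefficients. So the composite
\begin{equation*}
\locsys{W_E}{\check{G}}\to\locsys{W_E}{\check{D}}\to B\check{D}
\end{equation*}
agrees with \(\locsys{W_E}{\check{G}}\to B\check{G}\to B\check{D}\). Hence \(z^*\oo(\chi)\) is the vector bundle attached to the representation \(\tilde\chi\) of \(\check{G}^I\), obtained by inflating \(\chi\) along \(\check{G}^I\to\check{D}^I\). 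By the defining property of the spectral action in \cite{geometrization}, \(z^*\oo(\chi)*-\) is the Hecke operator \(T^G_{\tilde\chi}\), functorially in \(I\), \(\chi\), and compatibly with the \(W_E^I\)-action. The same argument applied to \(D\) itself shows that \(\oo(\chi)*\mathcal{W}_D=T^D_\chi(\mathcal{W}_D)\).

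\emph{Step 2: describe the Hecke operators explicitly.} Since \(\chi\) is irreducible, it is a character. It therefore corresponds to a tuple of cocharacters \((\mu_i)_{i\in I}\) of \(D\), all central in \(G\). The Satake sheaf of \(\tilde\chi\) is then supported on the closed Schubert cell of the central cocharacter \(\mu\) in the Beilinson--Drinfeld Grassmannian of \(G\). This cell maps isomorphically to \(\Div^I\), since central modifications are unique and are given by twisting with the \(D\)-bundle \(\oo(\mu\cdot\text{divisor})\). Consequently the relevant part of \(\Hck_G\) is \(\Bun_G\times\Div^I\), with the two maps \(\pr\) and \(a\circ(\mathcal{L}_\mu\times\id)\). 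Here \(\mathcal{L}_\mu\from\Div^I\to\Bun_D\) sends a divisor to the modification of the trivial \(D\)-bundle of type \(\mu\) along it. Exactly the same picture holds for \(\Hck_D\). The Satake sheaf on this cell is a rank-one constant sheaf; I would fix it via the chosen \(\sqrt q\), with a shift/twist by \(\langle 2\rho,\mu\rangle=0\), so that on both sides it is \(\Lambda\) with the Weil action given by \(\chi\).

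\emph{Step 3: compare.} By Step 2,
\begin{equation*}
T^D_\chi(e_!\Lambda)=(\mathcal{L}_\mu)_!\Lambda\in\D(\Bun_D\times\Div^I)\simeq\D(\Bun_D)^{BW_E^I}.
\end{equation*}
Convolving this with \(M\) gives \(a_!(\pr_1^*(\mathcal{L}_\mu)_!\Lambda\otimes\pr_2^*M)\). Proper base change along \(\mathcal{L}_\mu\times\id\), together with the projection formula, identifies this with the formula for \(T^G_{\tilde\chi}(M)\) from Step 2. Everything is built from the natural maps \(\mathcal{L}_\mu\), \(a\) and the base-change isomorphisms. Functoriality in \(I\), in maps of characters, and the \(W_E^I\)-equivariance therefore follow from the naturality of the corresponding data on \(\Div^I\), namely its behaviour under partial diagonals and the Weil-descent structure.

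\emph{Main obstacle.} I expect the hard part to be Step 2 made coherent: identifying the geometric Satake sheaf for a central character canonically, in a way compatible with the fusion and Weil-descent structures used to define the spectral action. The plan is to use that the Satake equivalence is compatible with the central isogeny-type map \(\Hck_D\to\Hck_G\) induced by \(D\to G\). This map is the pushforward of the Satake category of \(D\) along \(D\to G\), and it corresponds to restriction along \(\check{G}\to\check{D}\). That compatibility gives Steps 1--3 functorially in all variables at once.
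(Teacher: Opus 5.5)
Your proposal is correct and follows essentially the same route as the paper. Both arguments rest on three ingredients: identifying the bounded Hecke stack for a central weight with \(\Bun\times\Div^I\), where \(h_2\) is twisting by the \(D\)-bundle \(\mathcal{L}_\mu\); the constancy of the corresponding Satake sheaves; and base change together with the projection formula. The only difference is bookkeeping: the paper routes the computation through \(\Hck_{D\times G}\), the weight \(\chi\boxtimes\mathrm{triv}\) and the action map \(a_H\colon\Hck_{D\times G}\to\Hck_G\), whereas you first compute \(T^D_\chi(e_!\Lambda)\cong(\mathcal{L}_\mu)_!\Lambda\) explicitly and then base change along \(\mathcal{L}_\mu\times\id\).
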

\begin{proof}
    Only to unclutter the notation, we will assume \(I=\{*\}\), in the general case all occuring mentions of \(\Div^1\) need to be replaced with \((\Div^1)^I\), and all Hecke stacks acquire an additional superscript \(I\).
    Consider the following diagrams:
    \begin{equation}
        \begin{tikzcd}
\Bun_G\times\Div^1                                             & \Hck_{ G} \arrow[l, "h_1"'] \arrow[r, "h_2"]                                                 & \Bun_{G}\times\Div^1                                           \\
\Bun_{D\times G}\times\Div^1 \arrow[d, "a"] \arrow[u, "\pr_2"] & \Hck_{D\times G} \arrow[d, "a_H"] \arrow[l, "h_1'"'] \arrow[r, "h_2'"] \arrow[u, "\pr_{2H}"] & \Bun_{D\times G}\times\Div^1 \arrow[d, "a"] \arrow[u, "\pr_2"] \\
\Bun_G\times\Div^1                                             & \Hck_{G} \arrow[l, "h_1"'] \arrow[r, "h_2"]                                                  & \Bun_G\times\Div^1                                            
\end{tikzcd}
    \end{equation}
    and
    \begin{equation}\label{eq: crucial almost cartesian square}
        \begin{tikzcd}
\Bun_G\times\Div^1 \arrow[d, "e'"] & \Hck_{G} \arrow[l] \arrow[r] \arrow[d, "e'_H"] & \Bun_G\times\Div^1 \arrow[d, "e'"] \\
\Bun_{D\times G}\times\Div^1       & \Hck_{D\times G} \arrow[l] \arrow[r]           & {\Bun_{D\times G}\times\Div^1,}   
\end{tikzcd}
    \end{equation}
    where \(e'\) is the map induced by \((e,\id)\from G\to D\times G\), similarly for \(e'_H\).
    Let \(\chi'\) be the representation of \(\check{D}\times\check{G}\) given by \(\chi\boxtimes\mathrm{triv}\).
    We compute:
    \begin{align*}
        (\oo(\chi)*\mathcal{W}_D)\star A&\cong a_! T_{\chi'}(\mathcal{W}_D\otimes\pr_2^*A)\\
        &\cong h_{2!}a_{H!}(h_1'^*(\pr_{2}^*A\otimes e_!\Lambda)\otimes S_{\chi'})\\
        &\cong h_{2!}a_{H!}(h_1'^*e'_!A \otimes S_{\chi'}).
    \end{align*}
    We wish to apply base change to proceed from here, however the left hand square in \cref{eq: crucial almost cartesian square} is not quite cartesian, we have to modify it to the following diagram (implicitly using that \(\Hck_{D\times G}\cong\Hck_{D}\times\Hck_G\)):
    \begin{equation*}
\begin{tikzcd}
\Bun_G\times\Div^1 \arrow[d, "e'"]          & \Hck_{G}^{\leq0} \arrow[l, "h_1^0"'] \arrow[d, "\bar{e}'_H"]                                               &                                             \\
\Bun_{D\times G}\times\Div^1 \arrow[d, "a"] & \Hck_{D}^{\leq\chi}\times\Hck_G^{\leq 0} \arrow[l, "\bar{h}'_1"'] \arrow[r, "\bar{h}'_2"] \arrow[d, "a_H"] & \Bun_{D\times G}\times\Div^1 \arrow[d, "a"] \\
\Bun_{G}\times\Div^1                        & \Hck_{G}^{\leq z^*\chi} \arrow[l, "h_1^{z^*\chi}"'] \arrow[r, "h_2^{z^*\chi}"]                             & \Bun_G\times\Div^1                         
\end{tikzcd}
    \end{equation*}
    Now the upper left hand square is cartesian, as \(\Bun_D\times\Div^1\cong\Hck_D^{\leq\chi}\), so the lower left hand square is also cartesian and so is the left rectangle.
    Observe that the support of \(S_{\chi'}\) is contained in \(\Hck_D^{\leq\chi}\times\Hck_G^{\leq 0}\), thus we may compute Hecke operators in using this restricted diagram. 
    We get
    \begin{align*}
        h_{2!}a_{H!}(h_1'^*e'_!A \otimes S_{\chi'})&\cong h_{2!}^{z^*\chi}\bar{a}_{H!}(\bar{h}_1'^*e'_!A \otimes S_{\chi'})\\
        &\cong h_{2!}^{z^*\chi}\bar{a}_{H!}(\bar{e}'_{H!}\bar{h}_1^{0*}A \otimes S_{\chi'})\\
        &\cong h_{2!}^{z^*\chi}\bar{a}_{H!}\bar{e}'_{H!}(\bar{h}_1^{0*}A \otimes \bar{e}_{H}'^*S_{\chi'})
    \end{align*}
    Using compatibility of geometric Satake with products and its explit description for tori, we compute that \(\bar{e}_{H}'^*S_{\chi'}=\Lambda\).
    Continuing, we get
    \begin{align*}
        h_{2!}^{z^*\chi}\bar{a}_{H!}\bar{e}'_{H!}(\bar{h}_1^{0*}A \otimes \bar{e}_{H}'^*S_{\chi'})&\cong h_{2!}^{z^*\chi}\bar{a}_{H!}\bar{e}'_{H!}(\bar{h}_1^{0*}A)\\
        &\cong h_{2!}^{z^*\chi}h_{1}^{z^*\chi}A.
    \end{align*}
    Following the proof of \cite[Theorem VI.11.1.]{geometrization}, we can compute that \(S_{z^*\chi}\) is nothing but the constant sheaf on \(\Hck_{G}^{\leq z^*\chi}\), so we finally obtain
    \begin{equation*}
        h_2^{z^*\chi}h_1^{z^*\chi}A\cong h_{2!}(h_1^*A\otimes S_{z^*\chi})\cong z^*\oo(\chi)*A.
    \end{equation*}
    Let us remark that since \(\chi\) is assumed to be irreducible, functoriality in \(\chi\) means nothing more than compatiblity with endomorphisms of \(\chi\), which boils down to \(\Lambda\)-linearity.
    This is clear.
\end{proof}
\begin{lemma}\label{lem: spectral and whittaker endomorphism action agree}
    The map \(\Exc(D)\to\End(\id_{\D(\Bun_G)})=\End(\mathcal{W}_D\star -)\) induced by the map \(\Exc(D)\to\End(\mathcal{W}_D)\) agrees with the composite \(\Exc(D)\to\Exc(G)\to\End(\id_{\D(\Bun_G)})\).
\end{lemma}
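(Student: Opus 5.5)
The plan is to compare the two maps on the generators of \(\Exc(D)\) coming from excursion operators, and to reduce everything to the case of the torus \(D\) itself. Recall that \(\Exc(D)=\Exc(W_E,\check{D})\) is generated by excursion data \((I,V,\alpha,\beta,(\gamma_i)_{i\in I})\) with \(V\in\Rep(\check{D}^I)\) and \(\alpha\from 1\to V|_{\check{D}}\), \(\beta\from V|_{\check{D}}\to 1\). Since \(\check{D}\) is a torus, we may split \(V\) into irreducible characters. Only the character summands on which the diagonal restriction is trivial contribute to \(\alpha,\beta\), so it suffices to treat irreducible \(\chi\in\Rep(\check{D}^I)\) whose restriction along the diagonal is trivial, together with scalars \(\alpha,\beta\). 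The composite \(\Exc(D)\to\Exc(G)\) sends such a datum to \((I,z^*\chi,\alpha,\beta,(\gamma_i))\), where \(z^*\chi\) is viewed as a representation of \(\check{G}^I\) through \(\check{G}\to\check{D}\) (more precisely via the dual map \(\check{D}\subset\check{G}\) composed with the restriction used to define \(z\)). The resulting excursion endomorphism of \(\id_{\D(\Bun_G)}\) is the composite
\[
\id\xto{\alpha} T_{z^*\chi}\xto{(\gamma_i)} T_{z^*\chi}\xto{\beta}\id .
\]

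The key input is \cref{lem: compatible on line bundles}. It gives an identification \(T_{z^*\chi}\simeq(\oo(\chi)*\mathcal{W}_D)\star-\) that is functorial in \(\chi\) and in \(I\), and compatible with the Weil group action. Functoriality in \(I\) applied to the diagonal map \(I\to *\) matches the maps \(\alpha,\beta\): on the \(D\)-side they are the unit maps \(\mathcal{W}_D\to\oo(\chi)*\mathcal{W}_D\to\mathcal{W}_D\) of \(\star\)-kernels, and on the \(G\)-side they are the corresponding maps of Hecke functors. Compatibility with the \(W_E^I\)-action matches the \(\gamma_i\). Hence the \(G\)-side excursion operator equals \(\epsilon\star-\), where \(\epsilon\in\End(\mathcal{W}_D)\) is the composite
\[
\mathcal{W}_D\to\oo(\chi)*\mathcal{W}_D\xto{(\gamma_i)}\oo(\chi)*\mathcal{W}_D\to\mathcal{W}_D .
\]
This \(\epsilon\) is by definition the excursion operator of the datum acting on \(\mathcal{W}_D\in\D(\Bun_D)\), viewed through the Hecke action on \(\Bun_D\). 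We use here that \(\mathcal{W}_D\star-\simeq\id\) is the unit of the action, so \(\End(\id)=\End(\mathcal{W}_D\star-)\) receives \(\End(\mathcal{W}_D)\). Finally we check that both assignments are ring maps out of \(\Exc(D)\); multiplicativity is compatible with forming \(\star\)-products of kernels. Since they agree on generators, they agree.

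The main obstacle is the compatibility step. The functoriality in \(I\) provided by \cref{lem: compatible on line bundles} is only asserted for irreducible \(\chi\), whereas the excursion maps \(\alpha,\beta\) mix different index sets: \(1\in\Rep(\check{D}^{\{*\}})\) versus \(\chi\in\Rep(\check{D}^I)\), linked through the diagonal. My first attempt is to track the proof of \cref{lem: compatible on line bundles} along the diagonal \(\Div^1\to(\Div^1)^I\). Under the identification \(\bar{e}_H'^*S_{\chi'}=\Lambda\), fusion for the torus \(\check{D}\) is the identity on constant sheaves of the relevant Hecke stacks. It should therefore suffice to verify that the base change isomorphisms in that proof commute with restriction to the diagonal, which is a routine compatibility of proper base change with the fusion isomorphism.
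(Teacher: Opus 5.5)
Your proposal is correct and follows essentially the paper's route: both use the identification \(z^*\oo(\chi)*-\simeq(\oo(\chi)*\mathcal{W}_D)\star-\) of \cref{lem: compatible on line bundles}, functorial in \(\chi\), \(I\) and the Weil group action. This makes the excursion operators built from the \(G\)-side Hecke functors (which give \(\Exc(D)\to\Exc(G)\to\End(\id)\)) coincide with those built from the \(D\)-side kernels, and the latter are just excursion operators on \(\mathcal{W}_D\) transported along \(\mathcal{W}_D\star-\simeq\id\). Your reduction to characters and your treatment of \(\alpha,\beta\) spell out what the paper leaves implicit. The diagonal issue you flag is not a real obstacle: \(\chi|_{\mathrm{diag}}\) is again a character (trivial whenever \(\alpha\neq 0\)), so this case is already covered by the functoriality in \(I\) asserted in that lemma.
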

\begin{proof}
    The Hecke operators \(z^*\oo(\chi)*\) of the previous lemma define a map \(\Exc(D)\to\End(\id_{\D(\Bun_G)})\), which is precisely given by the composite \(\Exc(D)\to\Exc(G)\to\End(\id_{\D(\Bun_G)})\).
    Meanwhile, via the endofunctors \((\oo(\chi)*\mathcal{W}_D)\star -\) we also obtain a map \(\Exc(D)\to\End(\id_{\D(\Bun_G)})\).
    Here the action of the excursion operators are given by the action of the excursion operators on \(\mathcal{W}_D\), then applying \(\mathcal{W}_D\star-\).
    It follows that this action is induced by the map \(\Exc(D)\to\End(\mathcal{W}_D)\) and the equivalence \(\id_{\D(\Bun_G)}\simeq\mathcal{W}_D\star-\)

\end{proof}
\begin{corollary}\label{cor: compatible on coarse moduli}
    The action of \(\QCoh(\locsyscoarse{W_E}{\check{D}})\) induced via restricting the spectral action along the map  \(\QCoh(\locsyscoarse{W_E}{\check{D}})\to\QCoh(\locsys{W_E}{\check{D}})\) agrees with the convolution action of \(\D(\Bun_{D}^0)\) via the isomorphism of \cref{lem: cartier dual BunT QCoh(ParT)}.
\end{corollary}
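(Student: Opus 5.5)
The plan is to reduce both actions to actions of the ring \(R\defined\oo(\locsyscoarse{W_E}{\check{D}})\), and then to compare two maps of rings. Since \(\locsyscoarse{W_E}{\check{D}}\) is affine, \(\QCoh(\locsyscoarse{W_E}{\check{D}})=\Mod_R\). A \(\Mod_R\)-module structure on the presentable stable category \(\D(\Bun_G)\) is therefore the same as an \(\mathrm{E}_\infty\)-map \(R\to\End(\id_{\D(\Bun_G)})\), the latter being the centre of \(\D(\Bun_G)\). So I only need to show that the two induced maps \(R\to\End(\id_{\D(\Bun_G)})\) agree. I also need a concrete description of \(R\). For a torus, the excursion algebra surjects onto, and in fact identifies with, the ring of functions on the coarse moduli space: \(\Exc(D)\to R\), compatibly with the limit over \(K\) in \cref{lem: cartier dual BunT QCoh(ParT)}. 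This holds because \(\locsyscoarse{W_E}{\check{D}}=\Hom(D(E),\bbG_m)\) is a union of affine pieces \(\locsyscoarsesub{W_E}{\check{D}}{K}\), and on each piece the functions are the group algebra \(\Lambda[D(E)/K]\), generated by excursion-type elements.

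First I treat the spectral side. Restricting the spectral action along \(\QCoh(\locsyscoarse{W_E}{\check{D}})\to\QCoh(\locsys{W_E}{\check{D}})\to\Ind\Perf(\locsys{W_E}{\check{G}})\) gives a map \(R\to\End(\id)\). This map factors as \(\Exc(D)\to\Exc(G)\to\End(\id_{\D(\Bun_G)})\), because the spectral action restricted to functions on the coarse moduli space is, by construction in \cite{geometrization}, the excursion action of \(\Exc(G)\). Pulling back along \(z\) corresponds on coarse moduli spaces to the map \(\Exc(D)\to\Exc(G)\).

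Next I treat the geometric side. Convolution with \(\D(\Bun_D^0)\simeq\QCoh(\locsyscoarse{W_E}{\check{D}})\) sends \(f\in R\) to the endomorphism of \(\mathcal{W}_D\star-\simeq\id\) obtained by letting \(f\) act on the unit \(\mathcal{W}_D=\cind_*^{D(E)}\Lambda\). Under \cref{lem: cartier dual BunT QCoh(ParT)}, the unit \(\mathcal{W}_D\) corresponds to \(\oo\), so this action of \(f\) is multiplication by \(f\). On the torus side, the Fargues–Scholze correspondence for \(D\) identifies this multiplication with the action of \(\Exc(D)\to\End(\mathcal{W}_D)\). Here I would invoke the compatibility of \cite{categorical-fargues-tori} with the spectral action for \(D\) itself, which is the case \(G=D\) of the construction. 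Once this is in place, \cref{lem: spectral and whittaker endomorphism action agree} says precisely that the resulting map \(\Exc(D)\to\End(\id_{\D(\Bun_G)})\) agrees with \(\Exc(D)\to\Exc(G)\to\End(\id_{\D(\Bun_G)})\). Precomposing with the surjection \(\Exc(D)\to R\), the two maps out of \(R\) agree, and the corollary follows.

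I expect the main obstacle to be the identification in the previous paragraph, together with the coherence issue. I must check that the equivalence \(\D(\Bun_D^0)\simeq\QCoh(\locsyscoarse{W_E}{\check{D}})\) of \cref{lem: cartier dual BunT QCoh(ParT)}, which is built from Cartier duality, matches the excursion action on \(\cind_*^{D(E)}\Lambda\). I would verify this level by level in \(K\), using the explicit description \(\Lambda[D(E)/K]\). On this ring, excursion operators for tori act through local class field theory, which is also the input to Cartier duality. On the coherence issue: because the target is a commutative endomorphism ring and the source is generated by \(\Exc(D)\), agreement of \(\mathrm{E}_\infty\)-maps reduces to agreement after precomposition with \(\Exc(D)\to R\). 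This reduction works on each affine piece, where \(R_K\) is a localization or quotient of \(\Exc(D)\), and then passes to the limit over \(K\).
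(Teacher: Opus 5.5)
Your strategy matches the paper's. You reduce both actions to their effect through \(\Exc(D)\) on the center of \(\D(\Bun_G)\), identify the two resulting maps by \cref{lem: spectral and whittaker endomorphism action agree}, and work level by level in \(K\).

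However, your first step is false as stated: \(\locsyscoarse{W_E}{\check{D}}=\Hom(D(E),\bbG_m)\) is not affine. It is the increasing union, as \(K\) shrinks, of the open and closed affine pieces \(\locsyscoarsesub{W_E}{\check{D}}{K}=\Spec\Lambda[D(E)/K]\) (\(K\) pro-\(p\)). It has infinitely many components, coming from the smooth characters of the pro-\(p\) part of \(D(E)\), already for \(D=\bbG_m\). So \(\QCoh(\locsyscoarse{W_E}{\check{D}})=\lim_K\QCoh(\locsyscoarsesub{W_E}{\check{D}}{K})\) is essentially a product of module categories over the components, not \(\Mod_R\) for \(R=\lim_K R_K\). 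Consequently, ``an action is the same as a ring map \(R\to\End(\id_{\D(\Bun_G)})\)'' does not follow from what you wrote. Your last paragraph moves to the pieces, but ``work on each affine piece and pass to the limit'' has no meaning on all of \(\D(\Bun_G)\). The action there does not factor through \(R\to R_K\), because the idempotent of \(R\) cutting out \(\locsyscoarsesub{W_E}{\check{D}}{K}\) acts by the projector \(e_K\mathcal{W}_D\star-\), not by the identity.

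What is missing is the matching localisation on the automorphic side, and this is exactly what the paper's proof provides. It uses the subcategory \(\D^K(\Bun_G)^\omega\) of compact objects of level \(K\) from \cite[Section IX.5.]{geometrization}. These are the objects \(A\) for which \(\mathcal{W}_D\star A\to e_K\mathcal{W}_D\star A\) is an equivalence.
\begin{itemize}
\item By \cref{lem: spectral and whittaker endomorphism action agree}, both actions preserve this subcategory and factor there through \(\QCoh(\locsyscoarsesub{W_E}{\check{D}}{K})^\omega\simeq\D([*/(D(E)/K)])^\omega\).
\item On this affine piece, your comparison of ring maps out of \(\Lambda[D(E)/K]\) via \(\Exc(D)\) goes through as you describe.
\item Every compact object has some level \(K\), so you can then let \(K\) shrink.
\end{itemize}
Alternatively, you can keep your global formulation. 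Then you must show that \(\QCoh(\locsyscoarse{W_E}{\check{D}})\) is an idempotent algebra over \(\Mod_R\) in \(\Pr\), so that restricting actions along \(\Mod_R\to\QCoh(\locsyscoarse{W_E}{\check{D}})\) is fully faithful.

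A smaller point: \(\End(\id_{\D(\Bun_G)})\) is only an \(\mathrm{E}_2\)-algebra and need not be discrete. So reducing the coherence question to \(\pi_0\) needs more than commutativity of the target, although the paper is no more explicit on this.
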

\begin{proof}
    Fix a compact open subgroup \(K\subset D(E)\).
    This induces an open substack \(\locsyscoarsesub{W_E}{\check{D}}{K}\) and a subcategory \(\D^K(\Bun_G)^\omega\subset\D(\Bun_G)^\omega\) as in \cite[Section IX.5.]{geometrization}.
    It follows from \cref{lem: spectral and whittaker endomorphism action agree} that convolution action of \(\D(\Bun_D^0)^\omega\) preserves \(\D^K(\Bun_G)^\omega\) and it factors through \(\D(\Bun_D^0)\to\D([*/(D(E)/K)])\), as \(\D^K(\Bun_G)^\omega\) is equivalently described as the subcategory of objects \(A\) such that the natural map \(\mathcal{W}_D\star A\to e_{K}\mathcal{W}_D\star A\) is an equivalence.
    Here \(e_K\) is the projector from \(D(E)\)-representations to \(D(E)\)-representations with \(K\)-fixed vector.
    Then \cref{lem: spectral and whittaker endomorphism action agree} and \cref{lem: cartier dual BunT QCoh(ParT)} tells us that the following diagram commutes
    \begin{equation*}
        \begin{tikzcd}
\QCoh(\locsyscoarse{W_E}{\check{D}})^\omega \arrow[rr, "\cong"] \arrow[dd] \arrow[rd] &                                      & \D(\Bun_D^0)^\omega \arrow[dd] \arrow[ld] \\
                                                                            & \QCoh(\Exc(D))^\omega \arrow[ld] \arrow[rd] &                                    \\
{\QCoh(\locsyscoarse{W_E}{\check{D}}^{K})^\omega} \arrow[rd] \arrow[rr, "\cong"]     &                                      & {\D([*/(D(E)/K)])^\omega.} \arrow[ld]       \\
                                                                            & \End(\D^K(\Bun_G)^\omega)                   &                                   
\end{tikzcd}
    \end{equation*}
    Passing to the limit over \(K\) finishes the proof.
\end{proof}
\begin{proof}[Proof of \cref{thm: geometric convolution action is spectral action}]
    Following \cref{rem: enough to give incoherent actions}, it suffies to see that the geometric and spectral actions of \(\QCoh(\locsyscoarse{W_E}{\check{D}})\) and the line bundles coming from \(B\check{D}\) agree.
    This is the content of \cref{cor: compatible on coarse moduli} and \cref{lem: compatible on line bundles}.
\end{proof}

%% file: general-compatibility-results.tex
\section{General Compatibility results}
For applications, we will need the following computation.
Let us write \(G'\defined G/D\) and we write \(p\from\Bun_G\to\Bun_{G'}\) for the natural map on the automorphic side and \(z\from\locsys{W_E}{\check{G'}}\to\locsys_{\check{G}}\) on the spectral side.
\begin{lemma}\label{lem: universal !-descent BunG BunG'}
    Let \(p\from\Bun_G\to\Bun_G'\) be the natural map.
    Then \(p\) is cohomologicallly smooth and \(p^!\Lambda\cong\Lambda\), so that \(p^!\cong p^*\).
    Then \(\D\) admits universal \(!\)-descent along \(p\).
\end{lemma}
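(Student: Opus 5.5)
The plan is to show that \(p\from\Bun_G\to\Bun_{G'}\) is a torsor under the group \(v\)-stack \(\Bun_D\). This uses the exact sequence \(0\to\Bun_D\to\Bun_G\to\Bun_{G'}\to 0\) from the previous section. Surjectivity of \(p\) needs \(H^2\)-type obstructions for lifting \(G'\)-bundles to \(G\)-bundles to vanish. On the Fargues--Fontaine curve over a geometric point the obstruction lies in \(H^2(X,D)\), which vanishes for a torus since the curve has cohomological dimension one for coherent-type coefficients. Concretely, a \(G'\)-bundle reduces to a Levi and the lift can be built there. I expect this to be treated as standard, citing \cite{geometrization}: \(\Bun_G\to\Bun_{G'}\) is surjective as a map of \(v\)-stacks, and the action map \(\Bun_D\times\Bun_G\to\Bun_G\times_{\Bun_{G'}}\Bun_G\) is an isomorphism. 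With the torsor structure, cohomological smoothness and \(p^!\Lambda\) can be checked after the \(v\)-cover \(\Bun_G\to\Bun_{G'}\) itself. The base change of \(p\) along \(p\) is the projection \(\pr_2\from\Bun_D\times\Bun_G\to\Bun_G\), so it suffices to understand \(\Bun_D\to\Spd(\overline{\bbF_q})\).

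For \(\Bun_D\), invoke \cref{lem: splitting BunT in the induced case}: \(\Bun_D\cong[*/D(E)]\times X_*(D)_\Gamma\). The factor \(X_*(D)_\Gamma\) is discrete, hence étale with trivial dualizing complex. The factor \([*/D(E)]\) is cohomologically smooth of dimension \(0\), since \(D(E)\) is a locally pro-\(p\) group up to a discrete part and \(\ell\neq p\). Its dualizing complex is the sheaf given by the dual of the Haar measure character, which is trivial because \(D(E)\) is commutative and hence unimodular. Cohomological smoothness and the dualizing complex are \(v\)-local on the target in the sense of \cite{geometrization}. So \(p\) is cohomologically smooth and \(p^!\Lambda\) becomes trivial after pullback to \(\Bun_G\).

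The step I expect to be the main obstacle is descending the trivialization \(p^!\Lambda\cong\Lambda\), not just an isomorphism \(v\)-locally. Here \(p^!\Lambda\) is an invertible object whose pullback to \(\Bun_G\) is trivial, and the descent datum is a character of the \(\Bun_D\)-action. One has to check that \(\Bun_D\) acts trivially on \(\pr_2^!\Lambda\). I would do this by noting that the translation action of \(D(E)\) on its own Haar measure is trivial (again unimodularity/commutativity), while the discrete factor acts by permutation and so also trivially. Alternatively, one can argue locally on strata \(\Bun_{G'}^b\), where \(p\) is a \(\Bun_D\)-torsor over \([*/G'_b(E)]\)-type charts, and use that the modulus character of the central extension restricted to \(D(E)\) is trivial. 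Given the trivialization, \(p^!\cong p^*\otimes p^!\Lambda\cong p^*\) follows from cohomological smoothness.

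Finally, universal \(!\)-descent follows from the general fact that a cohomologically smooth surjective map satisfies universal \(!\)-descent for \(\D\). Indeed, \(!\)-descent along \(p\) is equivalent to \(*\)-descent up to twist by the (trivial) dualizing complex, and \(\D\) satisfies \(v\)-descent for \(*\)-pullback. Since both properties are stable under base change, the descent holds universally. Alternatively, one can cite the criterion in \cite{heyer-mann} that \(!\)-covers include smooth surjections.
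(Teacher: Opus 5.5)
Your proof is correct. It shares the paper's skeleton for smoothness and universal \(!\)-descent: \(p\) is a torsor under the cohomologically smooth group stack \(\Bun_D\), and smooth surjections are \(!\)-covers. Where it differs is the computation of \(p^!\Lambda\).

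The paper argues globally. It quotes Hamann--Imai for \(f^!\Lambda\cong\Lambda\) on \(\Bun_G\) and \(g^!\Lambda\cong\Lambda\) on \(\Bun_{G'}\), and concludes \(p^!\Lambda\cong p^!g^!\Lambda\cong f^!\Lambda\cong\Lambda\). You argue fibrewise, using only that \(\Bun_D\) is smooth of dimension \(0\) with dualizing complex given by the Haar module of \(D(E)\). The paper's version is one line but rests on a nontrivial global theorem about dualizing complexes on \(\Bun_G\). Yours is elementary and works for any \(\Bun_D\)-torsor, independently of what is known about \(\Bun_G\) and \(\Bun_{G'}\).

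The step you single out as the main obstacle is in fact vacuous, and your phrasing suggests you are treating \(p^!\Lambda\) as a sheaf on \(\Bun_{G'}\). It lives on the source \(\Bun_G\). So the relevant cover is the base change \(a\colon\Bun_D\times\Bun_G\to\Bun_G\) of \(p\) along itself, and smooth base change gives \(a^*p^!\Lambda\cong\pr_2^!\Lambda\cong\pr_1^*K_{\Bun_D}\). Since \(a\) has the section \(s=(e,\id)\), pulling back along \(s\) gives \(p^!\Lambda\cong s^*\pr_1^*K_{\Bun_D}\). This is the constant sheaf on \(\Bun_G\) whose value is the stalk of \(K_{\Bun_D}\) at the identity, a free \(\Lambda\)-module of rank one in degree \(0\). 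No descent datum, cocycle or unimodularity argument is needed. Your stratum-wise alternative would not suffice on its own anyway, since trivialising an invertible sheaf on each stratum does not give a global trivialisation.

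Two minor remarks. First, your heuristic for surjectivity of \(p\) via ``cohomological dimension one for coherent-type coefficients'' is not the right reason: the obstruction lives in \'etale \(H^2\) with torus coefficients. The paper, however, also simply takes the exact sequence \(0\to\Bun_D\to\Bun_G\to\Bun_{G'}\to 0\) as given. Second, for the final step it is cleaner to invoke descent along cohomologically smooth surjections directly, rather than \(v\)-descent for \(\D\).
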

\begin{proof}
    This follows from the fact that \(p\) is a torsor for the group stack \(\Bun_D\), which is cohomologically smooth.
    To compute \(p^!\Lambda\), let \(f\from \Bun_G\to *\) and \(g\from\Bun_{G'}\to *\) be the natural maps.
    \cite[Proposition 3.18.]{imaihamann} tells us that \(f^!\Lambda\cong\Lambda\) and \(g^!\Lambda\cong\Lambda\).
    We thus compute that \(p^!\Lambda\cong p^!g^!\Lambda\cong f^!\Lambda\cong\Lambda\).
\end{proof}
\begin{corollary}\label{cor: tensor product formula D(BunG)}
    We have an equivalence of categories \(\Mod_{\Lambda}\otimes_{(\D(\Bun_D),\star)}\D(\Bun_G)\simeq\D(\Bun_{G'})\).
    Here the map \(\D(\Bun_D)\to\Mod_\Lambda\) is \(\Gamma_c\), this is a symmetric monoidal functor for the convolution tensor product structure on \(\D(\Bun_D)\).
    The composite
    \begin{equation*}
        \D(\Bun_G)\to\Mod_{\Lambda}\otimes_{(\D(\Bun_D),\star)}\D(\Bun_G)\simeq\D(\Bun_{G'})
    \end{equation*}
    identifies with \(p_!\).
\end{corollary}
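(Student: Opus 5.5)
We prove the corollary by \(!\)-descent along \(p\): write \(\D(\Bun_{G'})\) as a limit over the Čech nerve of \(p\), identify each term with a tensor power of \(\D(\Bun_D)\) acting on \(\D(\Bun_G)\), and recognise the resulting limit as the relative tensor product.

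\textbf{Step 1: the Čech nerve.} By \cref{lem: universal !-descent BunG BunG'}, \(\D\) satisfies universal \(!\)-descent along \(p\). Since \(p\) is a \(\Bun_D\)-torsor, its Čech nerve is
\begin{equation*}
    [n]\mapsto\Bun_D^{\times n}\times\Bun_G .
\end{equation*}
The face maps are built from the action map \(a\), the multiplication \(m\) and projections. Descent therefore gives
\begin{equation*}
    \D(\Bun_{G'})\simeq\lim_{\Delta}\D(\Bun_D^{\times n}\times\Bun_G),
\end{equation*}
with \(!\)-pullbacks as transition maps. Passing to left adjoints, we may equivalently write it as \(\colim_{\Delta^\op}\) along \(!\)-pushforwards in \(\Pr\). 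The map \(\D(\Bun_G)\to\D(\Bun_{G'})\) into this colimit is then \(p_!\).

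\textbf{Step 2: Künneth on each term.} We need
\begin{equation*}
    \D(\Bun_D^{\times n}\times\Bun_G)\simeq\D(\Bun_D)^{\otimes n}\otimes\D(\Bun_G).
\end{equation*}
By \cref{lem: splitting BunT in the induced case}, \(\Bun_D\cong[*/D(E)]\times X_*(D)_\Gamma\). So \(\D(\Bun_D)\) is a product of copies of \(\D(D(E))\), which is compactly generated (hence dualizable). Künneth for products with classifying stacks of locally profinite groups then gives the identification, e.g. by reducing to smooth representations of \(D(E)^n\times G_b(E)\) stratum by stratum.

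\textbf{Step 3: matching with the bar construction.} Under Step 2, the \(!\)-pushforwards along the face maps become the convolution \(\star\), the action \(a_!(\pr_1^*-\otimes\pr_2^*-)\), and \(\Gamma_c\) for the last projection \(\Bun_D\to *\). Hence the simplicial diagram is the bar construction computing \(\Mod_\Lambda\otimes_{(\D(\Bun_D),\star)}\D(\Bun_G)\), and the colimit is the desired equivalence.

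That \(\Gamma_c\) is symmetric monoidal for \(\star\) follows from \(\Gamma_c(m_!(A\boxtimes B))=\Gamma_c(A)\otimes\Gamma_c(B)\) (Künneth again). Alternatively, under \cref{lem: cartier dual BunT QCoh(ParT)}, \(\Gamma_c\) is the pullback along the point of \(\locsys{W_E}{\check D}\) corresponding to the trivial parameter.

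\textbf{Step 4: identifying the composite.} The inclusion of the \(0\)-simplex in the colimit presentation is the left adjoint of \(p^!\), namely \(p_!\). This gives the last claim. Using \(p^!\cong p^*\) is only needed to make the descent statement concrete; it is not essential for this argument.

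\textbf{Main obstacle.} The hardest parts are the Künneth identification in Step 2 and the fact that the simplicial structure maps match the bar construction coherently, not just objectwise. I would handle this by observing that the whole simplicial object is the image of the simplicial \(v\)-stack \(\Bun_D^{\bullet}\times\Bun_G\) under the lax symmetric monoidal functor \(\D_!\). That functor is strongly monoidal here by Künneth, so the coherences come for free.
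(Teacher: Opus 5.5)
Your proposal is correct and follows essentially the same route as the paper. The bar construction is obtained by applying \(f\mapsto f_!\) to the simplicial \(v\)-stack \(\Bun_D^{\bullet}\times\Bun_G\) and identified termwise with the \v{C}ech nerve of \(p\) via K\"unneth; one then passes to right adjoints (\(!\)-pullbacks) and applies universal \(!\)-descent along \(p\) (\cref{lem: universal !-descent BunG BunG'}), so that the composite is the left adjoint \(p_!\) of \(p^!\). The only difference is that the paper takes the K\"unneth step directly from \cite[Proposition 3.2.]{motivic-geometrization}, whereas you argue via the splitting of \(\Bun_D\) and a stratum-by-stratum reduction; that reduction would need more care about how the strata glue together.
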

\begin{proof}
    The tensor product is computed via the colimit
    \begin{equation*}
        \colim_{n\in\Delta^\op}\D(\Bun_D)^{\otimes n}\otimes_{\Mod_\Lambda}\D(\Bun_G).
    \end{equation*}
    The simplicial object is constructed as follows.
    The natural action \(\Bun_D\) on \(\Bun_G\) gives rise to a simplicial object in \(v\)-stacks.
    Applying the functor \(\vStk\to\Pr\) that sends morphisms \(f\) to \(f_!\) then gives rise to a simplicial diagram in \(\Pr\), which precisely encodes the action of \(!\)-convolution.
    This colimit may then be rewritten as
    \begin{equation*}
        \lim_{n\in\Delta}\D(\Bun_D)^{\otimes n}\otimes_{\Mod_\Lambda}\D(\Bun_G)
    \end{equation*}
    where the transition maps are given by \(!\)-pullback.
    By \cite[Proposition 3.2.]{motivic-geometrization}, this identifies with
    \begin{equation*}
        \lim_{n\in\Delta}\D(\Bun_{D^n}\times\Bun_G),
    \end{equation*}
    again with \(!\)-pullback as transition maps.
    This identifies with \(\Bun_{G'}\) by \cref{lem: universal !-descent BunG BunG'}.

    From the colimit description we see that the composite
    \begin{equation*}
        \D(\Bun_G)\to\Mod_{\Lambda}\otimes_{(\D(\Bun_D),\star)}\D(\Bun_G)\simeq\D(\Bun_{G'})
    \end{equation*}
    identifies with the left adjoint of the projection \(\lim_{n\in\Delta}\D(\Bun_D^n\times\Bun_G)\to\Bun_G\), this is \(p^!\), therefore the map \(\D(\Bun_G)\to\Mod_{\Lambda}\otimes_{(\D(\Bun_D),\star)}\D(\Bun_G)\simeq\D(\Bun_{G'})\) identifies with \(p_!\) as claimed.
\end{proof}
\begin{corollary}\label{cor: spectral version kunneth D BunG}
    We have an equivalence of categories
    \begin{equation*}
        \Mod_{\Lambda}\otimes_{\Ind\Perf^\qc(\locsys{W_E}{\check{D}})}\D(\Bun_G)\simeq\D(\Bun_G)
    \end{equation*}
\end{corollary}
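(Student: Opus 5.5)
The plan is to reduce the spectral tensor product to the geometric one and then compute it by a bar construction and descent, in the same way as \cref{cor: tensor product formula D(BunG)}. I read the \(\Ind\Perf^\qc(\locsys{W_E}{\check{D}})\)-module structure on \(\D(\Bun_G)\) as the spectral action. I read the module structure on \(\Mod_\Lambda\) as the one coming from the augmentation that corresponds, under \cref{lem: cartier dual BunT QCoh(ParT)}, to the unit \(\mathcal{W}_D=e_!\Lambda\) of the convolution structure. I flag this second reading as the step most likely to be wrong.

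The first step is to identify the monoidal base. Since \(H^1(\Gamma,\check{D})\) vanishes, \cref{lem: splitting gerbe} gives \(\locsys{W_E}{\check{D}}\cong\Hom(D(E),\bbG_m)\times B\check{D}^\Gamma\). Both factors are smooth over \(\Lambda\) and have affine diagonal. Hence perfect complexes with quasi-compact support generate, and \(\Ind\Perf^\qc(\locsys{W_E}{\check{D}})\simeq\QCoh(\locsys{W_E}{\check{D}})\) as symmetric monoidal categories. Composing with \cref{lem: cartier dual BunT QCoh(ParT)} then gives \((\D(\Bun_D),\star)\). By \cref{thm: geometric convolution action is spectral action}, the spectral module structure on \(\D(\Bun_G)\) transports to the convolution module structure. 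So it suffices to compute the corresponding tensor product over \((\D(\Bun_D),\star)\).

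The second step is to write this tensor product as the geometric realization of the bar complex
\begin{equation*}
    \colim_{n\in\Delta^\op}\D(\Bun_D)^{\otimes n}\otimes_{\Mod_\Lambda}\D(\Bun_G)\simeq\colim_{n\in\Delta^\op}\D(\Bun_D^n\times\Bun_G),
\end{equation*}
where the second identification is the K\"unneth formula \cite[Proposition 3.2.]{motivic-geometrization}. All transition maps are \(!\)-pushforwards. With the augmentation as read above, the last face map is the pushforward along \(\pr_2\) precomposed with insertion of \(e\), rather than along the action map \(a\). I then pass to right adjoints, which turns the colimit into a limit along \(!\)-pullbacks. Because \(\Bun_D\to\Spd(\overline{\bbF_q})\) is cohomologically smooth with trivial dualizing complex (the argument of \cref{lem: universal !-descent BunG BunG'}), these \(!\)-pullbacks are \(*\)-pullbacks. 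The resulting cosimplicial diagram is split by the section \(e\), so its limit is its \(0\)-th term \(\D(\Bun_G)\), with the identity as the structure map.

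The main obstacle is the first half of the second step. I have to check that the module structure on \(\Mod_\Lambda\) really gives a split augmented diagram, with the unit \(e_!\Lambda\) acting as the identity via \(\mathcal{W}_D\star-\simeq\id\). If the intended augmentation is instead \(\Gamma_c\), the same computation produces \(\D(\Bun_{G'})\) as in \cref{cor: tensor product formula D(BunG)}, not \(\D(\Bun_G)\). I would therefore first pin down the augmentation by evaluating it on \(\oo\), which corresponds to \(\mathcal{W}_D\), and on the line bundles \(\oo(\chi)\), using \cref{lem: compatible on line bundles}, before running the descent argument.
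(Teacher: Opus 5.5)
Your first step is what the paper intends: identify \(\Ind\Perf^\qc(\locsys{W_E}{\check{D}})\) with \(\QCoh(\locsys{W_E}{\check{D}})\simeq(\D(\Bun_D),\star)\), and transport the spectral action to convolution via \cref{thm: geometric convolution action is spectral action}. The second step fails, and it fails because of the augmentation you chose.

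The \(\Mod_\Lambda\)-module structure is the one used throughout the paper (cf.\ \cref{lem: kunneth indperf}): pullback \(i^*\) along the trivial parameter. Under \cref{lem: cartier dual BunT QCoh(ParT)} this functor is \(\Gamma_c\). The fibre of \(\QCoh(\Hom(D(E),\bbG_m))\) at the trivial character is \(D(E)\)-coinvariants. Pullback along \(*\to B\check{D}^\Gamma\) forgets the \(X_*(D)_\Gamma\)-grading, i.e.\ sums over \(\pi_0(\Bun_D)\).

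Nothing singles out an augmentation ``corresponding to the unit \(e_!\Lambda\)''. Every symmetric monoidal functor sends the unit to \(\Lambda\), so testing on \(\oo\) decides nothing. The assignment \(\Lambda\mapsto e_!\Lambda\) goes \(\Mod_\Lambda\to\D(\Bun_D)\) and is not \(\D(\Bun_D)\)-linear.

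The bar-complex bookkeeping is also off, in two places.
\begin{itemize}
\item The augmentation only enters the face map that collapses a \(\D(\Bun_D)\)-factor into \(\Mod_\Lambda\). The face map into the \(\D(\Bun_G)\)-slot is the action \(a_!\) whatever the augmentation is.
\item A split augmented (co)simplicial object has its augmentation term as (co)limit, not its \(0\)-th term. For the limit to be \(\D(\Bun_G)\) with the identity as structure map, the two coface maps \(\D(\Bun_G)\rightrightarrows\D(\Bun_D\times\Bun_G)\) would have to agree. One of them is always \(a^!\); for the correct augmentation the other is \(\pr_2^!\).
\end{itemize}
Obtaining the \(0\)-th term is what happens for \(\D(\Bun_D)\otimes_{\D(\Bun_D)}\D(\Bun_G)\), not for \(\Mod_\Lambda\otimes_{\D(\Bun_D)}\D(\Bun_G)\). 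In fact no augmentation can produce \(\D(\Bun_G)\). For \(G=D\) the left-hand side is \(\Mod_\Lambda\otimes_{\QCoh(\locsys{W_E}{\check{D}})}\QCoh(\locsys{W_E}{\check{D}})\simeq\Mod_\Lambda\), which is not \(\D(\Bun_D)\).

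So the displayed \(\D(\Bun_G)\) is a misprint for \(\D(\Bun_{G'})\). This is how the corollary is used later: in the proof of \cref{thm: compatiblity full categorical conjecture} the augmented diagrams end in \(\Ind\Perf^\qc(\locsys{W_E}{\check{G'}})\) and \(\D(\Bun_{G'})\), and the proposition after it uses the same form. The paper gives no separate proof. The corollary is \cref{cor: tensor product formula D(BunG)} transported along \cref{lem: cartier dual BunT QCoh(ParT)} and \cref{thm: geometric convolution action is spectral action}, with \(i^*\) matching \(\Gamma_c\).

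The argument you relegated to a fallback is therefore the right one:
\begin{itemize}
\item With augmentation \(\Gamma_c\), K\"unneth turns the bar complex into \(\colim_{\Delta^\op}\D(\Bun_D^n\times\Bun_G)\) along \(!\)-pushforwards.
\item Passing to right adjoints gives the limit along \(!\)-pullbacks.
\item That limit is \(\D(\Bun_{G'})\) by universal \(!\)-descent along \(p\) (\cref{lem: universal !-descent BunG BunG'}), with comparison functor \(p_!\). No splitting is involved.
\end{itemize}

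A minor point: \(\Hom(D(E),\bbG_m)\) is not smooth over \(\Lambda\) when \(\ell\) is not banal for \(D\). So ``smooth'' is not the right justification for \(\Ind\Perf^\qc=\QCoh\) here, although the equivalence still holds, as the paper asserts.
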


We we can refine this result to work with perverse \(t\)-structures.
\begin{lemma}\label{lem: descent t-structure}
    We have an equivalence of categories
        \begin{align*}
        \Mod_{\Lambda,\geq 0}\otimes_{(\D(\Bun_D),\star)_\geq 0}\D(\Bun_G)_{\geq0}&\simeq\D(\Bun_{G'})_{\geq0}\\
    \end{align*}
    where we mean the perverse \(t\)-structure for sheaves on \(\D(\Bun_G)\) and the usual one on \(\Mod_{\Lambda}\).
\end{lemma}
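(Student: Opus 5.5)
We refine the proof of \cref{cor: tensor product formula D(BunG)} to connective parts. The guiding fact is that under the equivalence \(\Mod_\Lambda\otimes_{(\D(\Bun_D),\star)}\D(\Bun_G)\simeq\D(\Bun_{G'})\), the canonical functor \(\D(\Bun_G)\to\D(\Bun_{G'})\) is \(p_!\). We aim to show two things. First, \(p_!\) sends perverse connective objects to perverse connective objects. Second, \(\D(\Bun_{G'})_{\geq 0}\) is generated under colimits by the images \(p_!(\D(\Bun_G)_{\geq 0})\). Together with the formal description of connective parts in a relative tensor product of prestable categories, this identifies the left hand side with \(\D(\Bun_{G'})_{\geq0}\).

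The first step is to check that all structure functors in the bar construction are right \(t\)-exact, so that the relative tensor product of the connective parts makes sense in \(\mathrm{Groth}\)-type prestable categories. Concretely:
\begin{itemize}
\item \(\Gamma_c\from\D(\Bun_D)\to\Mod_\Lambda\) should be right \(t\)-exact for the perverse \(t\)-structure. By \cref{lem: splitting BunT in the induced case}, \(\Bun_D\cong[*/D(E)]\times X_*(D)_\Gamma\), and on each component the perverse \(t\)-structure is the usual one on smooth \(D(E)\)-representations up to a shift by \(\dim\) of the stratum, which is \(0\) because \(D\) is a torus. Then \(\Gamma_c\) is taking \(D(E)\)-coinvariants, which is right exact; \cref{lem: cartier dual BunT QCoh(ParT)} already records \(t\)-exactness of the Cartier duality.
\item The action map \(a_!(\pr_1^*-\otimes\pr_2^*-)\) should be right \(t\)-exact. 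This follows from \(a\) being a \(\Bun_D\)-torsor-type map, smooth with trivial dualizing complex, exactly as in \cref{lem: universal !-descent BunG BunG'}. Indeed \(a\) is isomorphic to the projection \(\Bun_D\times\Bun_G\to\Bun_G\) via the shear automorphism, and the perverse \(t\)-structure on \(\Bun_G\) is defined stratumwise with shifts by \(\langle2\rho,\nu_b\rangle\). Those shifts are unchanged by twisting with \(D\)-bundles, since \(D\) is central and the slopes of \(D\) do not interact with \(2\rho\).
\end{itemize}

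The second step is to compute the bar construction. As in \cref{cor: tensor product formula D(BunG)}, the colimit \(\colim_{\Delta^\op}\D(\Bun_D)^{\otimes n}_{\geq0}\otimes\D(\Bun_G)_{\geq0}\) is computed along \(!\)-pushforwards. By \cref{lem: universal !-descent BunG BunG'} we have \(p^!=p^*\), so the right adjoints are \(*\)-pullbacks along smooth maps of relative dimension \(0\), which are \(t\)-exact for the perverse \(t\)-structures by the same stratumwise shift argument. Using Künneth for connective parts (\cite[Proposition 3.2.]{motivic-geometrization} together with the stratification), each term should be identified with \(\D(\Bun_{D}^n\times\Bun_G)_{\geq0}\). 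Descent along \(p\) then gives \(\lim_\Delta\D(\Bun_D^n\times\Bun_G)_{\geq0}\simeq\D(\Bun_{G'})_{\geq0}\), since an object is perverse connective if and only if its \(t\)-exact pullback \(p^*\) is; here surjectivity of \(p\) is used. Finally, the colimit in \(\Pr\) of connective parts should agree with this limit, because passage to adjoints preserves the relevant connective subcategories.

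The main obstacle is the compatibility of the Lurie tensor product with connective parts. The tensor product of prestable categories is not simply the connective part of the stable tensor product, and Künneth for \(\D(\Bun_D\times\Bun_G)_{\geq0}\) needs justification. My plan is to reduce to strata. On \(\Bun_G^b\) the situation becomes \([*/G_b(E)]\) with a \(D(E)\)-action and \(\Bun_D^0=[*/D(E)]\), and there it is the classical statement \(\Mod_{\Lambda,\geq0}\otimes_{\D(D(E))_{\geq0}}\D(G_b(E))_{\geq0}\simeq\D(G'_b(E))_{\geq0}\). This holds because \(G_b(E)\to G'_b(E)\) has open image, using \(H^1(\Gamma,D)=0\), with kernel \(D(E)\), and coinvariants of compactly induced generators are computed explicitly. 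The \(X_*(D)_\Gamma\)-components only permute strata. Gluing along the stratification uses that \(p\) respects strata and that the perverse \(t\)-structure is glued from the strata.
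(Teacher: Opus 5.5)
Your proposal follows the paper's proof: reduce to the bar construction \(\colim_{n\in\Delta^\op}\D(\Bun_D^n\times\Bun_G)_{\geq 0}\), check right \(t\)-exactness of the \(!\)-pushforward transition maps by the dimension count, and conclude from \(p^!\cong p^*\) being \(t\)-exact (relative dimension \(0\)) and conservative. Your limit-along-right-adjoints formulation is slightly more complete than the paper's ``generation'' phrasing, and the K\"unneth identification for connective parts that you flag is exactly the step the paper passes over with ``this boils down to''; one correction to your stratum-wise plan is that you need \(G_b(E)\to G'_{b'}(E)\) to be surjective (which \(H^1(\Gamma,D)=0\) gives), not merely of open image.
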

\begin{remark}
    The tensor product, as all tensor products in this article, are formed in \(\Pr\).
    While all occuring categories are Grothendieck prestable categories, it is not clear if the inclusion of Grothendieck prestable categories into additive presentable categories preserves relative tensor products, as these computed via geometric realization.
\end{remark}
\begin{proof}
    This boils down to checking that 
    \begin{equation*}
        \colim_{n\in\Delta}\D(\Bun_{D^n}\times\Bun_{G})_{\geq0}\to\D(\Bun_{G'})_{\geq0}
    \end{equation*}
    is an equivalence.
    The transition maps are \(!\)-pushforwards and it is easy to check that they are right \(t\)-exact for the perverse \(t\)-structure in this situation, by checking the dimensions of the points.
    We are left to check that \(p_!\from\D(\Bun_{G})_{\geq0}\to\D(\Bun_{G'})_{\geq0}\) generates the target, equivalently that its right adjoint \(\tau_{\geq0}p^!\) is conservative.
    However \(\tau_{\geq0}p^!\cong\tau_{\geq0}p^*\) and \(p\) has relative dimension 0, thus \(\tau_{\geq0}p^*=p^*\) and is thus conservative.
\end{proof}
\begin{lemma}\label{lem: kunneth indperf}
    Let \(W\) be a discretization of  \(W_E/P\), where \(P\) is a pro-\(p\) open subgroup of the wild inertia of \(W_E\).
    Assume that  \(\ell\nmid \lvert\pi_0(Z(G ))\rvert\).
    We have an equivalence of symmetric monoidal categories
    \begin{equation*}
        \Ind\Perf(\locsys{W}{\check{G'}})\simeq\Ind\Perf(\locsys{W}{\check{G}})\otimes_{\Ind\Perf(\locsys{W}{\check{D}})}\Mod_\Lambda.
    \end{equation*}
    Here the functor \(\Ind\Perf(\locsys{W}{\check{D}})\to\Mod_\Lambda\) is induced by pullback along the trivial \(\check{D}\)-parameter, which induces a map \(i\from\Spec(\Lambda)\to\locsys{W}{\check{D}}\).
    This functor is evidently symmetric monoidal.
    The composite
    \begin{equation*}
        \Ind\Perf(\locsys{W}{\check{G}})\to \Ind\Perf(\locsys{W}{\check{G}})\otimes_{\Ind\Perf(\locsys{W}{\check{D}})}\Mod_\Lambda\simeq\Ind\Perf(\locsys{W}{\check{G'}})
    \end{equation*}
    identifies with the pullback along the map \(\locsys{W}{\check{G'}}\to\locsys{W}{\check{G}}\).
\end{lemma}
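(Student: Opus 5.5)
The plan is to realise the relative tensor product as a fibre product of stacks and then invoke a Künneth/1-affineness result for \(\Ind\Perf\). The first step is geometric. Since \(D\) is central, \(\check{G}\to\check{D}\) is a central isogeny-type quotient with kernel \(\check{G'}\to\check{G}\); I would check that the square
\begin{equation*}
\begin{tikzcd}
\locsys{W}{\check{G'}} \arrow[r] \arrow[d] & \locsys{W}{\check{G}} \arrow[d] \\
\Spec(\Lambda) \arrow[r, "i"] & \locsys{W}{\check{D}}
\end{tikzcd}
\end{equation*}
is cartesian. This is done at the level of moduli functors: a \(\check{G}\)-valued cocycle whose image in \(\check{D}\) is trivialised is the same as a \(\check{G'}\)-valued cocycle, using the exact sequence \(1\to\check{G'}\to\check{G}\to\check{D}\to 1\) of dual groups. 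The quotients by the conjugation actions need care: \(\check{G}\) acts on \(\check{D}\)-parameters trivially via the central-quotient map, but it acts through \(\check{D}\) on the trivialisation. One should therefore get \(Z^1(W,\check{G'})/\check{G'}\) only after checking that the \(\check{D}\)-part of the stabiliser cancels against the \(B\check{D}\)-factor of \(\locsys{W}{\check{D}}\).

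Granting the cartesian square, the second step is the Künneth formula
\begin{equation*}
\Ind\Perf(X\times_Z Y)\simeq\Ind\Perf(X)\otimes_{\Ind\Perf(Z)}\Ind\Perf(Y)
\end{equation*}
in the form of Ben-Zvi--Francis--Nadler, applied to \(X=\locsys{W}{\check{G}}\), \(Y=\Spec(\Lambda)\) and \(Z=\locsys{W}{\check{D}}\). These are quasi-compact derived stacks with affine diagonal, namely quotients of derived affine schemes by reductive groups, over a base \(\Lambda\) that is not a field. Perfectness of \(B\check{G}\) can fail in mixed characteristic, which is why the hypothesis \(\ell\nmid\lvert\pi_0(Z(G))\rvert\) is imposed. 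Alternatively, I would argue directly in a way closer to the paper's style. First, \(\locsys{W}{\check{D}}\cong\Hom(W^{\mathrm{ab}},\bbG_m)\text{-type affine}\times B\check{D}\), by the analogue of \cref{lem: splitting gerbe} for the discretized group. Then \(\Ind\Perf(\locsys{W}{\check{D}})\)-modules are described via \cref{cor: spectral action H1 vanishing torus}. Tensoring with \(\Mod_\Lambda\) along \(i^*\) then amounts to two operations: a base change along the closed point of the affine part, and taking the "\(X^*(\check{D})\)-coinvariants" of the grading. Concretely, we quotient by the action of the line bundles \(\oo(\chi)\), which is a colimit over \(BX^*(\check{D})\). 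The result is the category of \(\check{D}\)-weight-zero, i.e.\ \(\check{D}\)-descended, objects on the fibre. These identify with \(\Ind\Perf\) of \(Z^1(W,\check{G})\times_{Z^1(W,\check{D})}\{1\}/\check{G'}\).

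Finally, the identification of the composite functor with pullback follows formally. In the Künneth equivalence, the map \(\Ind\Perf(X)\to\Ind\Perf(X)\otimes_{\Ind\Perf(Z)}\Ind\Perf(Y)\) is \(M\mapsto M\otimes 1\), which corresponds to pullback along \(X\times_ZY\to X\). Symmetric monoidality is automatic because all functors involved are symmetric monoidal pullbacks.

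I expect the main obstacle to be the Künneth step with \(\Ind\Perf\) rather than \(\QCoh\) over a general \(\Lambda\). One must show that perfect complexes on the fibre product are generated by external tensor products \(M\boxtimes N\). This is where \(\ell\nmid\lvert\pi_0(Z(G))\rvert\) and the diagonalizability of \(\check{D}\) (linear reductivity in every characteristic) enter. The approach I would try first is to reduce via the torus splitting to the explicit grading description above.
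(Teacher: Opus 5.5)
\textbf{There is a genuine gap: the K\"unneth equivalence itself, which is the entire content of the lemma, is not established.} Your cartesian square and the formal identification of the composite with pullback are fine and agree with the paper.

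\emph{First route.} Ben-Zvi--Francis--Nadler is a statement about \(\QCoh\) of perfect stacks, and the hypothesis \(\ell\nmid\lvert\pi_0(Z(G))\rvert\) does nothing to make \(\locsys{W}{\check{G}}\) perfect. For example, \(G=\GL_n\) satisfies the hypothesis for every \(\ell\). Yet over \(\overline{\bbF_\ell}\) the stack \(B\GL_n\) (\(n\geq 2\)) has infinite cohomological dimension, so \(\Ind\Perf\neq\QCoh\) there. In the paper the \(\ell\)-hypothesis enters only through generation of \(\Ind\Perf\) of parameter stacks by vector bundles attached to representations of the dual group. So this route only covers \(\ell\in\Lambda^\times\).

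\emph{Second route.} This relocates the difficulty rather than resolving it. Tensoring along the closed point of the affine part \(A\) gives \(\Mod_\Lambda(\Ind\Perf(X))\). Identifying this with \(\Ind\Perf\) of the fibre is a Barr--Beck statement: \(\bar y^*\) must generate, and \(\bar y_*\oo\) computed in \(\Ind\Perf(X)\) must be the image of \(\Lambda\). For the non-free group \(W\), the scheme \(A\) is typically singular at the trivial character. For \(D=\bbG_m\) it has a factor \(\mu_{q-1}\), which is singular at \(1\) over \(\bbZ_\ell\) when \(\ell\mid q-1\). Then \(\Lambda\) is not a perfect \(\oo(A)\)-module. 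The needed base change can no longer be checked in \(\QCoh(X)\), because \(\Hom_{\QCoh(X)}(P,-)\) does not commute with the colimits of unbounded Tor-amplitude that present \(\Lambda\). Likewise, your identification of the \(X^*(\check D)\)-coinvariants with \(\Ind\Perf\) of the quotient by \(\check{G'}\) is exactly the generation statement you flag as the obstacle, and it is not proved.

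\textbf{The missing idea is to avoid \(W\) at first.}
\begin{itemize}
\item For a free group \(F_n\to W\), the scheme \(Z^1(F_n,\check D)=\check D^n\) is smooth, so \(y\from *\to Z^1(F_n,\check D)/\check D\) is lci. Hence \(y_*\oo\) is a direct sum of perfect complexes of uniformly bounded Tor-amplitude, and the base change \(f^*y_*\oo\cong\bar y_*\bar f^*\oo\) can legitimately be computed in \(\QCoh\).
\item Generation by \(\bar y^*\) reduces, via the \(\ell\)-hypothesis, to \(\Ind\Perf(B\check{G'})\) being generated by restrictions of \(\check G\)-representations. This holds because every \(\check{G'}\)-representation \(V\) is a direct summand of \((\ind_{\check{G'}}^{\check G}V)|_{\check{G'}}\).
\item Barr--Beck then gives the pushout square for each \(F(I)\).
\item One passes to \(W\) via \(\Ind\Perf(\locsys{W}{H})=\colim_{I\in\mathcal{I}}\Ind\Perf(\locsys{F(I)}{H})\) for \(H\in\{\check{G'},\check G,\check D\}\) \cite[Proposition X.3.4.]{geometrization}. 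Here \(\mathcal{I}\) is the category of finite sets \(I\) with a map \(F(I)\to W\). One then uses that colimits commute with pushouts in \(\CAlg(\Pr)\).
\end{itemize}
This sidesteps the singularities of \(Z^1(W,\check D)\) entirely and needs no splitting of \(\locsys{W}{\check D}\). Such a splitting would in any case involve \(B\check D^\Gamma\) rather than \(B\check D\), and would rely on \(H^1(\Gamma,D)=0\).
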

\begin{proof}
    We first show that this claim holds for the stacks \(Z^1(F_n,\check{G})/\check{G}\), where \(F_n\) is the group on \(n\) generators, which we equip with a morphism \(F_n\to W_E\), thus we consider \(\check{G}\)-valued cocycles with the \(F_n\)-action on \(\check{G}\) being the one induced by \(F_n\to W_E\).
    Consider the cartesian square
\begin{equation*}
    \begin{tikzcd}
{Z^1(F_n,\check{G'})/\check{G}'} \arrow[r, "\bar{y}"] \arrow[d, "\bar{f}"] & {Z^1(F_n,\check{G})/\check{G}} \arrow[d, "f"] \\
* \arrow[r, "y"]                                                           & {Z^1(F_n,\check{D})/\check{D}}               
\end{tikzcd}
\end{equation*}
    By Barr-Beck (see the proof of \cite[Lemma 3.8.]{zou2025categoricallocallanglandsmathrmgln} for details), it suffices to check that \(f^*y_*\oo\cong\bar{y}_*\bar{f}^*\oo\) and that \(\bar{y}^*\) generates \(\Ind\Perf(\locsys{W_E}{\check{G'}})\) under colimits.
    Observe that \(y\) is lci, one then computes \(y_*\oo\) is a direct sum of perfect complexes of uniformly bounded Tor-dimension and we can compute the base change in quasi-coherent sheaves instead of ind-perfect complexes.
    We now check that \(\bar{y}^*\) generates \(\Ind\Perf(Z^1(F_n,\check{G'})/\check{G}')\) under colimits.
    Under the assumption \(\ell\nmid\lvert\pi_0(Z(G))\rvert\) that we impose throughout the article, the category \(\Ind\Perf(Z^1(F_n,\check{G'})/\check{G}')\) is generated by vector bundles induced by \(\check{G'}\)-representations.
    It suffices to check that \(\Ind\Perf(B\check{G'})\) is generated by \(\Ind\Perf(B\check{G})\) under colimits.
    This follows from the fact that any \(\check{G'}\)-representation \(V\) splits of as a direct summand \((\ind_{\check{G'}}^{\check{G}}V)|_{\check{G'}}\).
    Let \(\mathcal{I}\) be the category whose objects are finite sets \(I\) together with a map \(F(I)\to W\), where \(F(I)\) is the free group on \(I\).

    Consider the diagram
    \begin{equation*}
        \begin{tikzcd}
	& {\locsys{W}{\check{G}'}} && {\locsys{W}{\check{G}}} \\
	{\lim_{I\in\mathcal{I}}\locsys{F(I)}{\check{G}'}} && {\lim_{I\in\mathcal{I}}\locsys{F(I)}{\check{G}}} \\
	& {*} && {\locsys{W}{\check{D}}.} \\
	{*=\lim_{n\in\mathcal{I}}*} && {\lim_{I\in\mathcal{I}}\locsys{F(I)}{\check{D}}}
	\arrow[from=1-2, to=1-4]
	\arrow[from=1-2, to=3-2]
	\arrow[from=1-4, to=3-4]
	\arrow[from=2-1, to=1-2]
	\arrow[from=2-1, to=2-3, crossing over]
	\arrow[from=2-1, to=4-1]
	\arrow[from=2-3, to=1-4]
	\arrow[from=3-2, to=3-4]
	\arrow[from=4-1, to=3-2]
	\arrow[from=4-1, to=4-3]
	\arrow[from=4-3, to=3-4]
    \arrow[from=2-3, to=4-3, crossing over]
\end{tikzcd}
    \end{equation*}
    The front and the back square are cartesian.
    The slanted arrows induce equivalences on \(\Ind\Perf\) and the front square turns into a pushout square upon applying \(\Ind\Perf\), as we have
    \begin{equation*}
        \Ind\Perf(\lim_{I\in\mathcal{I}}\locsys{F(I)}{\check{G}})\cong\colim_{I\in\mathcal{I}}\Ind\Perf(\locsys{F(I)}{\check{G}})
    \end{equation*}
    by \cite[Proposition X.3.4.]{geometrization} and on each term in the limit \(\Ind\Perf\) turns the square into a pushout by our previous discussion.

    For the last point, consider the following diagram, where the square is cartesian.
    \begin{equation*}
        \begin{tikzcd}
\locsys{W}{\check{G}} \arrow[rd, dashed] \arrow[rrd, bend left] \arrow[rdd, bend right] &[-20pt]                                          &                               \\[-20pt]
                                                                                      & \locsys{W}{\check{G'}} \arrow[r] \arrow[d] & \locsys{W}{\check{G}} \arrow[d] \\
                                                                                      & * \arrow[r]                              & \locsys{W}{\check{D}}          
\end{tikzcd}
    \end{equation*}
    We just computed that \(\Ind\Perf\) sends the cartesian square to a pushout square in \(\CAlg(\Pr)\), i.e. satisfies categorical K\"unneth for this square.
    In such situations, the composite
        \begin{equation*}
        \Ind\Perf(\locsys{W}{\check{G}})\to \Ind\Perf(\locsys{W}{\check{G}})\otimes_{\Ind\Perf(\locsys{W}{\check{D}})}\Mod_\Lambda\simeq\Ind\Perf(\locsys{W}{\check{G'}})
    \end{equation*}
    always identifies with pullback along the dashed map.
\end{proof}
\begin{corollary}\label{cor: kunneth for parameter stack}
    We have an equivalence of categories
    \begin{equation*}
        \Ind\Perf^\qc(\locsys{W_E}{\check{G}})\otimes_{\Ind\Perf^\qc(\locsys{W_E}{\check{D}})}\Mod_\Lambda\simeq\Ind\Perf^\qc(\locsys{W_E}{\check{G'}})
    \end{equation*}
    and also without the quasi-compact support condition.
\end{corollary}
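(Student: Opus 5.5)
We deduce the statement from \cref{lem: kunneth indperf} by passing from \(W\) to \(W_E\) and then to the quasi-compact-support variants.

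The case without support condition comes first. By \cite[Proposition X.3.4.]{geometrization} (in the form used in the proof of \cref{lem: kunneth indperf}), \(\locsys{W_E}{\check{G}}\) is a disjoint union, indexed by open pro-\(p\) subgroups \(P\) of wild inertia, of open and closed pieces. Each piece is identified with an open and closed substack of \(\locsys{W}{\check{G}}\) for a discretization \(W\) of \(W_E/P\). The maps to \(\locsys{W_E}{\check{D}}\) and from \(\locsys{W_E}{\check{G'}}\) are compatible with these decompositions. Since \(\check{D}\to\check{G}\) is central, the restriction of a parameter to \(P\) is determined by its image in \(\check{G}/\check{D}\) up to a character of \(P\). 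Hence it is cleanest to write
\begin{equation*}
\Ind\Perf(\locsys{W_E}{\check{G}})=\colim_P\Ind\Perf(\locsys{W_E/P}{\check{G}}),
\end{equation*}
and similarly for \(\check{D}\) and \(\check{G'}\). The transition maps are pullbacks along the inclusions, so they are also given by the projections onto direct factors. The relative tensor product commutes with these colimits in \(\Pr\), because tensor products in \(\Pr\) commute with colimits separately in each variable, and the colimit can be made filtered and compatible on all three terms. We then apply \cref{lem: kunneth indperf} for each \(P\). The only subtlety is that the trivial \(\check{D}\)-parameter lies in the \(P\)-tamely ramified component for every \(P\), so the base-change functor \(\Ind\Perf(\locsys{W_E}{\check{D}})\to\Mod_\Lambda\) factors through each stage.

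For the quasi-compact-support version, recall that \(\Ind\Perf^\qc(X)\subset\Ind\Perf(X)\) is the full subcategory generated by perfect complexes supported on finitely many connected components, and that it is an ideal. For \(\check{D}\), the connected components of \(\locsys{W_E}{\check{D}}\) are indexed by characters of \(\pi_0\) data. Pullback along \(z\) sends perfect complexes with quasi-compact support to perfect complexes with quasi-compact support. Both \(\Ind\Perf^\qc(\locsys{W_E}{\check{G}})\) and \(\Ind\Perf^\qc(\locsys{W_E}{\check{G'}})\) decompose as direct sums over connected components, which are quasi-compact. We therefore restrict the equivalence from the first step component-by-component. Write \(\locsys{W_E}{\check{G}}=\coprod_\alpha X_\alpha\), so that \(\Ind\Perf^\qc=\bigoplus_\alpha\Ind\Perf(X_\alpha)\). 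The same decomposition holds for \(\check{D}\), with the trivial parameter lying in a single component \(Y_0\), and for \(\check{G'}\). The relative tensor product over \(\bigoplus_\beta\Ind\Perf(Y_\beta)\) with \(\Mod_\Lambda\) only sees the \(\beta=0\) summand. It therefore equals \(\bigoplus_{\alpha\mapsto 0}\Ind\Perf(X_\alpha)\otimes_{\Ind\Perf(Y_0)}\Mod_\Lambda\). Each factor is computed by the unrestricted statement applied to the quasi-compact open and closed pieces, and we sum them.

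The main obstacle is the bookkeeping of components. We must check that the preimage of \(Y_0\) in \(\locsys{W_E}{\check{G}}\) is a disjoint union of components \(X_\alpha\), and that under the fibre product these match exactly the components of \(\locsys{W_E}{\check{G'}}\). This requires the map \(\locsys{W_E}{\check{G'}}\to\locsys{W_E}{\check{G}}\) to be a closed immersion onto a union of components' fibres. It follows from the cartesian square in the proof of \cref{lem: kunneth indperf}, since the fibre over a point of a quasi-compact component is quasi-compact. The same cartesian square identifies the composite functor with pullback.
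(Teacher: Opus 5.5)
You have a genuine gap. Your first step rests on the identification \(\Ind\Perf(\locsys{W_E}{\check G})=\colim_P\Ind\Perf(\locsys{W_E/P}{\check G})\), and this is false. Decompose \(\locsys{W_E}{\check G}=\coprod_\alpha X_\alpha\) into quasi-compact open and closed pieces, e.g.\ by ramification level. Then \(\Perf(\locsys{W_E}{\check G})=\prod_\alpha\Perf(X_\alpha)\), so
\[\Ind\Perf(\locsys{W_E}{\check G})=\Ind\Bigl(\prod_\alpha\Perf(X_\alpha)\Bigr).\]
On the other hand, the colimit over \(P\) in \(\Pr\), taken along extension by zero (equivalently, the limit along restrictions), is
\[\prod_\alpha\Ind\Perf(X_\alpha)=\Ind\Bigl(\bigoplus_\alpha\Perf(X_\alpha)\Bigr)=\Ind\Perf^\qc(\locsys{W_E}{\check G}).\]
These categories differ: \(\oo\) is compact in the first, whereas \((\oo_{X_\alpha})_\alpha\) is not compact in the second. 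If you use restrictions as transition maps, as you propose, the diagram even has a terminal object (the full wild inertia), so the colimit would only see tamely ramified parameters.

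Once corrected, your first paragraph is exactly the paper's proof of the quasi-compact-support statement. The statement without support condition is then left unproved. Your second paragraph deduces the \(\qc\)-case from that unproved statement, which runs the logic backwards. The component-wise identifications you use there can, however, be read off directly from \cref{lem: kunneth indperf} at a sufficiently small level \(P\).

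What is missing for the version without support condition is an argument that handles \(\Ind\) of an infinite product directly. The paper checks that the comparison functor
\[\Ind\Perf(\locsys{W_E}{\check{G'}})\to\Ind\Perf(\locsys{W_E}{\check G})\otimes_{\Ind\Perf(\locsys{W_E}{\check D})}\Mod_\Lambda\]
is fully faithful and essentially surjective on compact generators. A perfect complex on the infinite disjoint union is the sum, and simultaneously the product, of its restrictions to the quasi-compact pieces. Hence mapping spectra between perfect complexes split as products over the pieces, and this reduces the claim to the quasi-compact-support case already established.

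Two smaller corrections. First, \(z\) is not a closed immersion; it is not even a monomorphism, since the factor \(*\to B\check D^\Gamma\) of the base-changed map \(*\to\locsys{W_E}{\check D}\) contributes a \(\check D^\Gamma\)-torsor. You only need that \(z\) is quasi-compact and compatible with the decomposition. Second, for non-noetherian \(\Lambda\), connected components need not be open. You should therefore decompose into quasi-compact open and closed pieces rather than into connected components.
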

\begin{proof}
    With the quasicompact support condition, it suffices to observe that 
    \begin{equation*}
        \colim_{P\subset W_E}\Ind\Perf(\locsys{W_E/P}{\check{G}})\simeq\Ind\Perf^\qc(\locsys{W_E/P}{\check{G}}),
    \end{equation*}
    where \(P\subset W_E\) runs over open pro-\(p\) subgroups of the wild inertia.
    For the claim without quasi-compact support, we check that the functor 
    \begin{equation*}
        p^*\from\Ind\Perf(\locsys{W_E}{\check{G'}})\to\Ind\Perf(\locsys{W_E}{\check{G}})\otimes_{\Ind\Perf(\locsys{W_E}{\check{D}})}\Mod_\Lambda
    \end{equation*}
    is fully faithful and essentially surjective.
    For fully faithfulness, it suffices to check this on perfect complexes with quasi-compact support, as every perfect complex is a direct sum of perfect complexes of quasi-compact support, and this direct sum is also a direct product.
    This easy follows from the fact that \(\locsys{W_E}{\check{G}}\) is an infinite disjoint union of quasi-compact stacks.
    This case follows from the previous discussion.
    One shows essential surjectivity in the same way.
\end{proof}
From these computation, one would expect a relationship of the full categorical conjecture between \(G\) and \(G'\).
Let us start off with the spectral action.
\begin{proposition}\label{prop: commutativity different spectral actions}
    For any object \(A\in\D(\Bun_G)\), the following diagram commutes.
    \begin{equation*}
\begin{tikzcd}
\Ind\Perf^\qc(\locsys{W_E}{\check{G}})\otimes\D(\Bun_G) \arrow[d, "z^*\otimes p_!"] \arrow[r, "-*-"] & \D(\Bun_G) \arrow[d, "p_!"] \\
\Ind\Perf^\qc(\locsys{W_E}{\check{G}'})\otimes\D(\Bun_{G'}) \arrow[r, "-*-"]                         & {\D(\Bun_{G'}),}           
\end{tikzcd}
    \end{equation*}
    where \(z\from\locsys{W_E}{\check{G}'}\to\locsys{W_E}{\check{G}}\) is the natural map.
\end{proposition}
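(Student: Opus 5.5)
We reduce the claim to its generators and check it through Hecke operators. Both composites are colimit-preserving in each variable. By \cite[Proposition X.3.4.]{geometrization} and the colimit description used in \cref{cor: kunneth for parameter stack}, \(\Ind\Perf^\qc(\locsys{W_E}{\check{G}})\) is generated under colimits by the images of the categories \(\Rep(\check{G}^I)\), which act through excursion data. We therefore construct the natural equivalence on these generators: vector bundles \(V\) attached to representations of \(\check{G}^I\) with their Weil group action, together with the action of excursion operators. We then extend it by the universal property of the spectral action, namely the Nadler–Yun/Gaitsgory style reconstruction \cite[Theorem X.0.1]{geometrization}. Concretely, the claim becomes two statements. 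First, \(p_!\) intertwines the Hecke operator \(T_V\) on \(\Bun_G\) with \(T_{z^*V}\) on \(\Bun_{G'}\), where \(z^*V\) is \(V\) restricted to \(\check{G}'^I\subset\check{G}^I\). Second, this intertwining is compatible with composition of Hecke operators, with the functoriality in \(I\), and with the \(W_E^I\)-actions.

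For the Hecke operators we use the commutative diagram of Hecke stacks \(\Hck_G\to\Hck_{G'}\) lying over \(p\times\id\colon\Bun_G\times(\Div^1)^I\to\Bun_{G'}\times(\Div^1)^I\). The key geometric input is that the square formed by \(h_1\colon\Hck_G\to\Bun_G\times(\Div^1)^I\) and \(h_1'\colon\Hck_{G'}\to\Bun_{G'}\times(\Div^1)^I\) is cartesian. This holds because modifications of a \(G\)-bundle at the divisor correspond to modifications of the induced \(G'\)-bundle together with a modification of the \(D\)-part. More precisely, one restricts to the connected components of \(\Hck_G\) that lift the bounded pieces of \(\Hck_{G'}\), in the same way as the restricted diagram was used in the proof of \cref{lem: compatible on line bundles}. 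The map of local Hecke stacks \(\localhck_G\to\localhck_{G'}\) induces an isomorphism on each component, since \(G\to G'\) is a central isogeny up to the torus \(D\) and the affine Grassmannians differ only by \(\pi_0\). Geometric Satake is compatible with this map: the pullback of \(S_{z^*V}\) to \(\Hck_G\) agrees with \(S_V\) summed appropriately over components. This follows from the functoriality of geometric Satake along the morphism \(G\to G'\), as in \cite[Theorem VI.11.1]{geometrization}. With these inputs, proper/smooth base change and the projection formula give
\[p_!h_{2!}(h_1^*A\otimes S_V)\cong h'_{2!}(h_1'^*p_!A\otimes S_{z^*V}).\]

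It remains to upgrade this to an equivalence of module functors. We use \cref{cor: tensor product formula D(BunG)} and \cref{cor: kunneth for parameter stack}. The functor \(p_!\) is the base change \(\D(\Bun_G)\to\Mod_\Lambda\otimes_{(\D(\Bun_D),\star)}\D(\Bun_G)\), and \(z^*\) is the base change \(-\otimes_{\Ind\Perf^\qc(\locsys{W_E}{\check{D}})}\Mod_\Lambda\). By \cref{thm: geometric convolution action is spectral action}, the \(\D(\Bun_D)\)-module structure on \(\D(\Bun_G)\) is the restriction of the spectral action. Hence the spectral action on \(\D(\Bun_G)\) descends to an action of \(\Ind\Perf^\qc(\locsys{W_E}{\check{G}})\otimes_{\Ind\Perf^\qc(\locsys{W_E}{\check{D}})}\Mod_\Lambda\simeq\Ind\Perf^\qc(\locsys{W_E}{\check{G}'})\) on \(\D(\Bun_{G'})\), in such a way that \(p_!\) is linear along \(z^*\). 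We then only need to identify this descended action with the spectral action of \(G'\). By the uniqueness part of \cite[Theorem X.0.1]{geometrization}, this reduces to checking that the two actions give the same Hecke operators compatibly in \(I\), which is exactly the computation of the previous paragraph, using that \(p^*\) is conservative.

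The main obstacle is the Satake compatibility on the non-cartesian part. When \(D\) is nontrivial, \(\Hck_G\to\Hck_{G'}\) is not an isomorphism on fibres; over each \(G'\)-modification it is a \(\Bun_D\)-type torsor. One has to show that pushing forward \(S_V\) yields \(S_{z^*V}\) on the nose rather than a twist of it. I would first attempt this by factoring as in \cref{lem: compatible on line bundles}, treating the \(\Hck_D\) factor separately, where Satake is the constant sheaf on \(\Hck_D^{\leq\chi}\cong\Bun_D\times\Div^1\).
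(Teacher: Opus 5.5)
Your overall architecture is the paper's. You reduce, via the universal property behind \cite[Theorem X.0.1]{geometrization}, to natural isomorphisms \(p_!T_V(A)\cong T_{V'}(p_!A)\) compatible in \(I\) and with the \(W_E^I\)-actions, and you obtain these from base change plus the compatibility of geometric Satake with \(G\to G'\) as in \cite[Theorem VI.11.1]{geometrization}. But the geometric step, as you set it up, does not work.

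The square formed by \(h_1\) and \(h_1'\) is not cartesian. The comparison map
\[
\zeta\from\Hck_G^I\to\Bun_G\times_{\Bun_{G'}}\Hck_{G'}^I
\]
is, locally over \(\Bun_G\), the map \(\mathrm{Gr}_G^I\to\mathrm{Gr}_{G'}^I\). Its fibres are \(\mathrm{Gr}_D\)-torsors, i.e.\ a copy of \(X_*(D)\) over a geometric point with one leg. Your own ``modification of the \(D\)-part'' is precisely this extra datum.

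Restricting to connected components does not repair this once \(\lvert I\rvert\geq 2\). Over distinct legs, the types \((\lambda_1+d,\lambda_2-d)\) with \(d\in X_*(D)\) have the same image under \(\zeta\). They all degenerate into the same component over the diagonal, so they lie in one connected component. Nor is \(\localhck_G\to\localhck_{G'}\) a componentwise isomorphism: componentwise it is an \(L^+D\)-gerbe, and only the Grassmannians match componentwise.

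Consequently the Satake input is not that the pullback of \(S_{V'}\) to \(\Hck_G\) is \(S_V\). That pullback is \(\zeta^*\zeta_!S_V\), which already for one leg is supported on all \(X_*(D)\)-translates of the support of \(S_V\). The correct input is \(\zeta_!S_V\cong\bar p_H^*S_{V'}\), where \(\bar p_H\) is the projection of the fibre product to \(\Hck_{G'}^I\).

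Your last paragraph then misplaces the difficulty. Pushing \(S_V\) forward along all of \(\Hck_G\to\Hck_{G'}\), whose fibres do contain the \(\Bun_D\)-torsor direction, does not give \(S_{V'}\). It gives \(S_{V'}\otimes\bar p_{H!}\Lambda\), so it picks up compactly supported cohomology of \(\Bun_D\)-torsors. The factorisation through \(\Hck_{D\times G}\) in \cref{lem: compatible on line bundles} concerns the multiplication \(D\times G\to G\), not the quotient \(G\to G'\), so it does not address this.

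The missing idea is to factor \(h_1=\bar h_1\circ\zeta\), where \(\bar h_1\) is the projection of the fibre product to \(\Bun_G\). The square formed by \(\bar h_1,\bar p_H,h_1',p\) is cartesian by definition. So base change absorbs the whole \(\Bun_D\)-direction with no Satake input for \(D\). The only Satake input is \(\zeta_!S_V\cong\bar p_H^*S_{V'}\), a statement about a map with discrete fibres. Two projection formulas then give
\[
T_{V'}(p_!A)\cong h'_{2!}\bar p_{H!}(\bar h_1^*A\otimes\bar p_H^*S_{V'})\cong h'_{2!}\bar p_{H!}\zeta_!(h_1^*A\otimes S_V)\cong p_!T_V(A),
\]
uniformly in \(I\) and functorially in \(V\). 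Your ``\(S_V\) summed appropriately over components'' is the right intuition only when the sum is read as \(\zeta_!\) and the pieces as the supports indexed by \(\pi_1(G)^I\)-types.

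Finally, your third paragraph, which descends the spectral action via \cref{cor: tensor product formula D(BunG)} and \cref{cor: kunneth for parameter stack}, is a detour the paper does not need. The descended action must still be identified with the \(G'\)-spectral action by this same Hecke computation. It also requires checking that \(\Gamma_c\) corresponds to the fibre at the trivial parameter under \cref{lem: cartier dual BunT QCoh(ParT)}.
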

\begin{proof}
        The datum of a map \(\Perf(\locsys{W_E}{\check{G}})\to\mathcal{C}\) is the same thing as giving functorially in finite sets \(I\) a functor \(\Rep(Q)^I\)-linear \(\Rep((\check{G}\rtimes Q)^I)\to\mathcal{C}^{BW_E^I}\), using the same proof as \cite[Theorem X.0.1.]{geometrization}.

    Thus it suffices to check that \(\pi_!T_{V}(A)\cong T_{V'}(\pi_!A)\), where \(V\in\Rep((\check{G}\rtimes Q)^I)\) and \(V'\) is the inflation to \((\check{G}'\rtimes Q)^I\).
    This is effectively the same proof as \cite[Theorem IX.6.1.]{geometrization}, let us recall the details. 
    Consider the diagram
    \begin{equation*}
        \begin{tikzcd}
                      & \Hck_G^I \arrow[d, "\zeta"] \arrow[ld, "h_1"'] \arrow[rd, "h_2"]                &                                       \\
\Bun_G \arrow[d, "p"] & \Bun_G\times_{\Bun_{G'}}\Hck_{G'}^I \arrow[l, "\bar{h}_1"] \arrow[d, "\bar{p}_H"] & (\Bun_G\times\Div^1)^I \arrow[d, "p"] \\
\Bun_{G'}             & \Hck_{G'}^I \arrow[l, "h_1'"] \arrow[r, "h_2'"']                                & (\Bun_{G'}\times\Div^1)^I.            
\end{tikzcd}
    \end{equation*}
    We compute 
    \begin{align*}
        T_{V'}(p_!A)&\cong h_{2!}'(h_1'^*p_!A\otimes S_{V'})\\
        &\cong h_{2!}'(\bar{p}_!\bar{h}_1^*A\otimes S_{V'})\\
        &\cong h_{2!}'\bar{p}_!(\bar{h}_1^*A\otimes \bar{p}_H^*S_{V'})\\
        &\cong h_{2!}'\bar{p}_!(\bar{h}_1^*A\otimes \zeta_!S_{V})\\
        &\cong h_{2!}'\bar{p}_!\zeta_!(\zeta^*\bar{h}_1^*A\otimes S_{V})\\
        &\cong p_!h_{2!}(h_1^*A\otimes S_V).
    \end{align*}
    Here we used that \(\zeta\) is locally over \(\Bun_G\) the map \(\mathrm{Gr}_G^I\to\mathrm{Gr}_{G'}^I\), to the pushforward of \(S_V\) is taken to the pullback of \(S_{V'}\) by compatiblity of geometric Satake with the map \(G\to G'\) inducing isomorphisms on adjoint groups, as in the proof of \cite[Theorem VI.11.1.]{geometrization}
\end{proof}

The goal is to prove compatibility results for \cref{thm: compatiblity full categorical conjecture} under passage from \(G\) to \(G'\), let us start off with some preparatory observation on compact objects.
\begin{lemma}\label{lem: pullback compact automorphic side}
    Assume that all primes that are not banal for \(D\) are invertible in \(\Lambda\) and that \(H^1(\Gamma,D)=0\)
    Then \(p^!\from\D(\Bun_{G'})\to\D(\Bun_{G})\) sends compact object to a sheaf whose restriction to each connected component of \(\Bun_G\) is compact.
\end{lemma}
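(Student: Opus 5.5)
Since \(p^!\cong p^*\) by \cref{lem: universal !-descent BunG BunG'}, I will work with \(p^*\) throughout. The category \(\D(\Bun_{G'})^\omega\) is generated under finite colimits and retracts by the objects \(i_{b'!}s_{b'\sharp}\cind_K^{G'_{b'}(E)}\Lambda\), for \(b'\in B(G')\) and \(K\) a small compact open pro-\(p\) subgroup; this uses \cite[Section V]{geometrization}. Since \(p^*\) is exact, it suffices to treat these generators. I will also restrict to a single connected component \(\Bun_G^{\alpha}\), indexed by \(\alpha\in\pi_1(G)_\Gamma\). The map \(\Bun_G^\alpha\to\Bun_{G'}^{\alpha'}\) is then a torsor under \(\Bun_D^0\cong[*/D(E)]\); this follows from \cref{lem: splitting BunT in the induced case}, since \(B(D)\) is torsion free, so fixing \(\alpha\) fixes the \(X_*(D)_\Gamma\)-part. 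The first step is therefore to identify \(p^*\) restricted to a stratum. The fibre of \(\Bun_G^\alpha\to\Bun_{G'}\) over \(\Bun_{G'}^{b'}\) is a finite union of strata \(\Bun_G^{b}\), namely the \(b\mapsto b'\) lying in \(\alpha\). On these strata the relevant groups fit into an exact sequence
\[1\to D(E)\to G_b(E)\to G'_{b'}(E),\]
and the last map has finite cokernel because \(H^1(\Gamma,D)=0\). Base change then reduces the claim to the following: restriction of \(\cind_K^{G'_{b'}(E)}\Lambda\) along \(G_b(E)\to G'_{b'}(E)\) (inflation to \(G_b(E)\)) is compact in \(\D(G_b(E),\Lambda)\), and is extended by \(!\) along \(i_b\).

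Next I would prove that compactness. The restriction of \(\cind_K^{G'_{b'}(E)}\Lambda\) to the finite-index image \(G_b(E)/D(E)\) is a finite sum of compact inductions from open compact subgroups. So it suffices to show that the inflation of \(\cind_{\bar K}^{G_b(E)/D(E)}\Lambda\) is compact. Let \(\tilde K\subset G_b(E)\) be the preimage of \(\bar K\), an extension of \(\bar K\) by \(D(E)\). Then
\[\cind_{\tilde K}^{G_b(E)}(\mathrm{inf}\,\Lambda)\cong\cind_{\tilde K}^{G_b(E)}\bigl(\cind_{D(E)}^{D(E)}\Lambda\bigr),\]
and the problem reduces to showing that the trivial representation \(\Lambda\) of \(\tilde K\) is compact. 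Pick \(K_D\subset D(E)\) compact open with a compact open \(K_1\subset\tilde K\) splitting as \(K_D\cdot K_1'\). The trivial representation of \(D(E)\) is the cofibre of a Koszul-type resolution by copies of \(\cind_{K_D}^{D(E)}\Lambda\), using that \(D(E)/D(E)^0\) is a lattice \(\bbZ^r\); a Koszul complex for \(\Lambda[\bbZ^r]\) is finite. What remains is that \(\Lambda\) is compact on the compact group \(D(E)^0/K_D\), and this is exactly where the banality hypothesis enters. All primes dividing the pro-order of \(D(E)^0\) that are not invertible are excluded, so the averaging idempotent exists and \(\Lambda\) is a direct summand of \(\cind_{K_D}^{D(E)^0}\Lambda\).

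Finally I will assemble the pieces. Compact induction and \(i_{b!}s_{b\sharp}\) preserve compactness, and there are only finitely many \(b\) over \(b'\) in the component \(\alpha\). Together these give compactness of \(p^*A|_{\Bun_G^\alpha}\). I expect the main obstacle to be the group-theoretic reduction in the second step. One must show that trivial representations of \(D(E)\), inflated through the extension \(\tilde K\), remain compact; that needs both the Koszul resolution over the lattice part and the banality argument for the maximal compact part. One also has to check that \(G_b(E)\to G'_{b'}(E)\) really has finite cokernel and the expected kernel for non-basic \(b\), using that \(G_b\to G'_{b'}\) is again a central quotient by \(D\).
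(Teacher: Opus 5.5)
Your proposal is correct in substance and has the same skeleton as the paper's proof. Both use \(p^!\cong p^*\), reduce to generators \(i_{b'!}A\), and base change to \(p^{-1}(\Bun_{G'}^{b'})\). Both observe that each connected component of \(\Bun_G\) meets this fibre in at most one stratum \(\Bun_G^b\). Both then reduce to showing that inflation along \(G_b(E)\to G'_{b'}(E)\), whose kernel is \(D(E)\), preserves compact objects, which ultimately rests on \(\Lambda\in\D(D(E))^\omega\) by banality.

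The difference lies in this last step. The paper notes that \(q\from[*/G_b(E)]\to[*/G'_{b'}(E)]\) base changes along \(*\to[*/G'_{b'}(E)]\) to \([*/D(E)]\to *\), hence is prim. So \(q^*\) preserves prim (equivalently compact) objects by Heyer--Mann; this is short and needs no choice of subgroups. You do it by hand via compact inductions, a Koszul resolution for \(D(E)/D(E)^0\cong\bbZ^r\), and the averaging idempotent. That is more elementary and shows exactly where the lattice part and the banality hypothesis enter.

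Two points about your execution. First, \(G_b\to G'_{b'}\) is a surjection of algebraic groups with kernel \(D\), so \(H^1(\Gamma,D)=0\) makes \(G_b(E)\to G'_{b'}(E)\) surjective, not merely of finite cokernel. Moreover, torsion-freeness of \(X_*(D)_\Gamma\) makes \(X_*(D)_\Gamma\to\pi_1(G)_\Gamma\) injective, so there is exactly one \(b\) over \(b'\) in a given component. The Mackey step is therefore superfluous.

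Second, and more importantly, avoid the splitting \(K_1=K_D\cdot K_1'\). If you mean a direct product with \(K_1'\cap D(E)=1\), it can fail when \(E\) has positive characteristic. For \(\GL_2\to\PGL_2\) over \(\bbF_2((t))\), a homomorphism from a congruence subgroup to the centre that is the identity on central elements would force all \(u\in 1+t^N\bbF_2[[t]]\) to be squares, for some \(N\). The splitting is also not needed:
\begin{itemize}
\item Shrinking \(\bar K\), take \(K_1\subset G_b(E)\) compact open pro-\(p\) with image \(\bar K\), so \(\tilde K=D(E)K_1\).
\item Then \(D(E)^0K_1\) is normal in \(\tilde K\) with quotient \(D(E)/D(E)^0\cong\bbZ^r\).
\item The Koszul complex therefore builds \(\Lambda\) out of finitely many copies of \(\cind_{D(E)^0K_1}^{\tilde K}\Lambda\).
\item The trivial representation of \(D(E)^0K_1\) is a retract of \(\cind_{K_1}^{D(E)^0K_1}\Lambda\), since \(\lvert D(E)^0K_1/K_1\rvert\) is invertible in \(\Lambda\) by banality and \(p\in\Lambda^\times\).
\end{itemize}
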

\begin{proof}
    
    Let \(A\in\D([*/G'_{b'}(E)])^\omega\).
    It is sufficient to show that \((p^!i_{b'!}A)_\kappa\) is compact, where \((-)_\kappa\) is the direct factor corresponding to the connected component \(\kappa\).
    Note that \(p^!\cong p^*\) by \cref{lem: universal !-descent BunG BunG'}, so that \(p^!i_{b'!}A\) is \(!\)-extended from \(p^{-1}(\Bun_{G'}^{b'})\).
    This fiber is a torsor under \(\Bun_D\) and \(\pi_0(\Bun_D)\) acts simply transitively on the fiber, see \cite[Section 4.5.]{isocrystals-with-additional-structure-ii} and thus decomposes into connected components according to \(\pi_0(\Bun_D)\).
    In fact, by considering the Kottwitz point we see that each point of the fiber lives in a different connected component of \(\Bun_G\), see \cite[Proposition 4.10.]{isocrystals-with-additional-structure-ii} and its proof.
    As \(\kappa\) uniquely determines an element \(b\in p^{-1}(\Bun_{G'}^{b'})\), we need to see that \(A\) remains compact after restricting along \(G_b(E)\to G_{b'}'(E)\).
    As we assumed that \(D\) is induced, the map \(G_b(E)\to G_{b'}'(E)\) is surjective. Since we assumed that the non-banal primes for \(D\) are invertible in \(\Lambda\), it follows that \(\Lambda\in\D(D(E))^\omega\).
    Therefore, the map \([*/D(E)]\to *\) is prim, this follows from \cite[Proposition 5.3.14.]{heyer-mann}.
    As \(G_b(E)\to G_{b'}'(E)\) is surjective, the map \(q\from[*/G_b(E)]\to[*/G_{b'}'(E)]\) is isomorphic to \([*/D(E)]\to *\) after pulling back along \(*\to[*/G_{b'}'(E)]\), it follows that \(q\) is prim as well.
    Pulling back along prim maps preserves prim objects, this is \cite[Lemma 4.5.16.]{heyer-mann}, and prim objects agree with the compact objects, this is \cite[Proposition 5.3.14.]{heyer-mann}, therefore \(q^*\) preserves compact objects, as we wanted to show.
\end{proof}
\begin{remark}
    The condition on non-banal primes is necessary, as can be seen from the case \(G=D\), so \(G'=\{e\}\).
\end{remark}
We now want to analyze the spectral analogues.
First let us recall that there is a 6-functor formalism \(\IndCoh\) defined on locally almost finitely presented stacks (see \cite[Definition 2.2.1.(b)]{hansen-mann}) extending the assignment \(\Spec(A)\mapsto\Ind(\Coh(A))\), such that all relevant maps and structure maps are !-able, this is \cite[Theorem 2.9.12.]{hansen-mann}.
For smooth geometric stacks \(X\), \(\QCoh(X)=\IndCoh(X)\), this is \cite[Proposition 2.10.4.]{hansen-mann}.
Finally, for the parameter stack we have \(\IndCoh(\locsys{W_E}{\check{G}})=\Ind\Coh^\qc(\locsys{W_E}{\check{G}})\) when \(\Lambda\) is a \(\bbQ\)-algebra, this is \cite[Theorem 3.3.1.(a)]{hansen-mann}.
\begin{lemma}\label{lem: pullback compact spectral side}
    Assume that \(\ell\in\Lambda^\times\), i.e. \(\Lambda\) is a \(\bbQ_\ell\)-algebra and assume that \(H^1(\Gamma,D)=0\)
    Then \(z_*\from\Ind\Coh^\qc(\locsys{W_E}{\check{G}'})\to\Ind\Coh^\qc(\locsys{W_E}{\check{G}})\) sends a compact object to a sheaf whose isotypic summand under the \(Z(\check{G})^\Gamma\)-action on sheaves on \(\locsys{W_E}{\check{G}}\) is compact.
\end{lemma}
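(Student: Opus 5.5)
The plan is to mirror the automorphic argument of \cref{lem: pullback compact automorphic side}, with the torsor \(p\from\Bun_G\to\Bun_{G'}\) replaced by the map \(z\from\locsys{W_E}{\check{G}'}\to\locsys{W_E}{\check{G}}\). First, by \cref{lem: kunneth indperf} and \cref{cor: kunneth for parameter stack}, \(z\) is a base change of \(y\from\Spec(\Lambda)\to\locsys{W_E}{\check{D}}\), the trivial \(\check{D}\)-parameter. By \cref{lem: splitting gerbe}, \(\locsys{W_E}{\check{D}}\cong\Hom(D(E),\bbG_m)\times B\check{D}^\Gamma\). So \(y\) factors as the inclusion of the point \(1\in\Hom(D(E),\bbG_m)\) followed by \(\Spec(\Lambda)\to B\check{D}^\Gamma\).

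The first step is to show that \(\Hom(D(E),\bbG_m)\) is smooth over \(\Lambda\). This uses \(H^1(\Gamma,D)=0\), which makes \(D(E)\) essentially a product of a lattice and a compact group, together with \(\ell\in\Lambda^\times\), which makes the pro-\(p\)-part étale and kills the finite prime-to-\(p\) torsion obstruction in characteristic zero. Granting this, the point inclusion is a regular closed immersion, hence lci and proper. The map \(\Spec(\Lambda)\to B\check{D}^\Gamma\) is a \(\check{D}^\Gamma\)-torsor with \(\check{D}^\Gamma\) a torus (again by \(H^1(\Gamma,D)=0\)), hence smooth and affine.

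Next, decompose \(z\) accordingly. Set \(Y\defined\locsys{W_E}{\check{G}}\times_{\Hom(D(E),\bbG_m)}\{1\}\). Then \(z\) factors as \(\locsys{W_E}{\check{G}'}\xto{u}Y\xto{v}\locsys{W_E}{\check{G}}\), where \(v\) is a base change of a regular closed immersion and \(u\) is a \(\check{D}^\Gamma\)-torsor, i.e. \(\locsys{W_E}{\check{G}'}\to Y\) exhibits \(Y\) as a quotient by the free action of the torus \(\check{D}^\Gamma\). The pushforward \(v_*\) along a closed immersion preserves coherence and quasi-compact support, so it preserves compact objects of \(\Ind\Coh^\qc\). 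This uses the identification \(\IndCoh=\Ind\Coh^\qc\) in characteristic zero from \cite[Theorem 3.3.1.(a)]{hansen-mann}. It then remains to treat \(u_*\).

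For \(u_*\), write \(u_*\mathcal{F}\) for a coherent \(\mathcal{F}\) with quasi-compact support. Locally, \(u_*\mathcal{F}\) is the pushforward along \(B\check{D}^\Gamma\)-type maps, i.e. taking \(\check{D}^\Gamma\)-equivariant structure on the torsor. It decomposes as a direct sum over characters \(\chi\in X^*(\check{D}^\Gamma)\) of coherent pieces, each of which is the \(\chi\)-isotypic summand for the \(Z(\check{G})^\Gamma\supset\check{D}^\Gamma\)-action. In characteristic zero, taking isotypic parts for a diagonalizable group is exact. Each isotypic piece is the pushforward of \(\mathcal{F}\otimes\chi\) along the affine map \(\locsys{W_E}{\check{G}'}\to Y\), with \(\check{D}^\Gamma\)-invariants taken. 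That operation is finite on quasi-compact opens, since the coarse map is finite and the \(\check{D}^\Gamma\)-quotient is good. Hence each summand is coherent with quasi-compact support, and thus compact.

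I expect the main obstacle to be this last step: making precise that the isotypic summands of \(u_*\mathcal{F}\) are coherent and not merely quasi-coherent. My first attempt would be to reduce, via the \(F_n\)-presentations as in the proof of \cref{lem: kunneth indperf}, to the finite type affine situation \(Z^1(F_n,\check{G}')\to Z^1(F_n,\check{G})\). There, invariants under a reductive group in characteristic zero preserve finite generation by Hilbert's theorem.
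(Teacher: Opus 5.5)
Your factorization \(z=v\circ u\) is the same as the paper's \(z=x\circ y\): \(v\) is the base change of the point \(1\in\Hom(D(E),\bbG_m)\), and \(u\) is the base change of \(\Spec(\Lambda)\to B\check{D}^\Gamma\). Your treatment of \(v_*\) is fine, and simpler than the paper's. Pushforward along a closed immersion preserves coherent sheaves with quasi-compact support, so you need neither the lci/prim argument nor smoothness of \(\Hom(D(E),\bbG_m)\) for that step.

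\textbf{The gap is in the step for \(u_*\), which is the real content of the lemma.} You write \(Z(\check{G})^\Gamma\supset\check{D}^\Gamma\), but \(\check{D}\) is not a subgroup of \(\check{G}\). Dualizing the central torus \(D\subset G\) gives a surjection \(\check{G}\to\check{D}\) with kernel \(\check{G}'\), so there is only a homomorphism \(Z(\check{G})^\Gamma\to\check{D}^\Gamma\). The line bundle \(\mathcal{L}_\chi\) on \(Y\), pulled back from \(\chi\in X^*(\check{D}^\Gamma)\), has \(Z(\check{G})^\Gamma\)-weight equal to \(\chi\) composed with this homomorphism. Hence the \(\lambda\)-isotypic summand of \(u_*\oo\cong\bigoplus_{\chi}\mathcal{L}_\chi\) is \(\bigoplus_{\chi\mapsto\lambda}\mathcal{L}_\chi\). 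This is coherent only if the restriction map \(X^*(\check{D}^\Gamma)\to X^*(Z(\check{G})^\Gamma)\) has finite, hence trivial, kernel, i.e.\ only if \(Z(\check{G})^\Gamma\to\check{D}^\Gamma\) is surjective.

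That surjectivity is the key input of the paper's proof, taken from Kottwitz. It is also where \(H^1(\Gamma,D)=0\) really enters on the spectral side. Your argument never states or proves it; the reversed inclusion is what makes the step look automatic. The surjectivity does hold. The kernel of \(X^*(\check{D})_\Gamma\to X^*(Z(\check{G}))_\Gamma\) is a quotient of \(H_1\) of a finite group with coefficients in \(X^*(Z(\check{G}'))\), hence torsion. On the other hand, \(X^*(\check{D})_\Gamma=X^*(\check{D}^\Gamma)\) is torsion-free because \(\check{D}^\Gamma\) is connected.

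A related problem: for a general coherent \(\mathcal{F}\) on \(\locsys{W_E}{\check{G}'}\), \(u_*\mathcal{F}\) has no intrinsic grading by \(X^*(\check{D}^\Gamma)\). Your phrase ``taking \(\check{D}^\Gamma\)-invariants of the pushforward of \(\mathcal{F}\otimes\chi\)'' presupposes a \(\check{D}^\Gamma\)-equivariant structure on \(\mathcal{F}\), which a general coherent sheaf does not carry. Making sense of it would again require choosing compatible central characters, which uses the same surjectivity. The only canonical decomposition is by \(Z(\check{G})^\Gamma\)-weights on \(Y\).

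The paper avoids this issue. It uses that \(u\) is affine and that isotypic projection is exact to reduce, via presentations, to \(\mathcal{F}=\oo\). There \(u_*\oo\) is base changed from \(\oo(\check{D}^\Gamma)=\bigoplus_\chi\chi\) on \(B\check{D}^\Gamma\), and the surjectivity finishes the proof. Your fallback via Hilbert's finiteness theorem on \(F_n\)-presentations does not remove the need for this input. Without surjectivity, the weight spaces of \(u_*\oo\) are infinite direct sums of line bundles, however one organizes the invariants.
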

\begin{proof}
    Consider the maps \(*\xto{y'} B\check{D}^\Gamma\xto{x'}\locsys{W_E}{\check{D}}\) picking out the trivial character.
    These pull back to maps \(\locsys{W_E}{\check{G'}}\xto{y}\locsyssub{W_E}{\check{G'}}{\check{D}=1}\xto{x}\locsys{W_E}{\check{G}}\), the space \(\locsyssub{W_E}{\check{G'}}{\check{D}=1}\) is defined as the pullback of \(\locsys{W_E}{\check{G}}\to\locsys{W_E}{\check{D}}\) along \(x'\).
    One computes that this identifies with \(Z^1_{\check{D}=1}(W_E,\check{G}')/G'\), where \(Z^1_{\check{D}=1}(W_E,\check{G})\defined Z^1(W_E,\check{G})\times_{Z^1(W_E,\check{D})}\check{D}.\mathrm{triv}\).
    With \(\mathrm{triv}\) we mean the trival \(\check{D}\)-valued cocycle and we consider the \(\check{D}\)-orbit of it in \(Z^1(W_E,\check{D})\), which is a closed subscheme.

    As \(\Lambda\) is a \(\bbQ\)-algebra, the map \(x'\) and thus also \(x\) is a local complete intersection.
    Clearly these maps are also closed immersions.

    Thus they are cohomologically smooth and proper for the \(\IndCoh\)-formalism, this is \cite[Theorem 2.9.12.(ii)]{hansen-mann}, so that \(x_*\) identifies with \(x_!\) up to twist by a shifted line bundle, which does not matter for preserving compact objects.\footnote{In fact, one can even compute that the shifted line bundle is nothing more than \(\oo[-1]\), so we only shift. This follows from \cite[Theorem 3.3.1.(iii)]{hansen-mann}}
    By cohomological smoothness, \(x_!\) preserves prim objects, see \cite[Lemma 4.5.16.]{heyer-mann}, and prim objects are the compact objects, this is \cite[Proposition 2.10.14.]{hansen-mann}.
    We are left with analyzing the behavior of \(y_*\).
    Consider the following pullback square:

    Then it suffices to check that \(\bar{y}_*\) sends coherent sheaves to such ind-coherent sheaves, such that under the \(Z(\check{G})^\Gamma\)-action on \(Z^1_{D=1}(W_E,\check{G})\) coming from restricting the \(\check{G}\)-action, the isotypic components are coherent sheaves.
    Since \(\bar{y}\) is affine, we can consider coherent sheaves concentrated in degree 0, then using presentation it suffices to consider the structure sheaf.
    This situation is base changed from \(y'\from *\to B\check{D}^\Gamma\), thus we are done if \(Z(\check{G})^\Gamma\to\check{D}^\Gamma\) is surjective.
    This is discussed in the proof of \cite[Proposition 4.10.]{isocrystals-with-additional-structure-ii}.
\end{proof}
\begin{remark}
    Under the assumption that \(H^1(\Gamma,D)=0\), the map \(B\check{D}^\Gamma\to Z^1(W_E,\check{D})/\check{D}\) picking out the trival parameter is Gorenstein whenever the non-banal primes are invertible in \(\Lambda\). This mirrors the assumption made in \cref{lem: pullback compact automorphic side}, both are used to show that a certain morphism is prim.
    Indeed we expect the lemma to be true under this weaker assumption on \(\Lambda\), the only issue is that we do not know if \(\IndCoh(\locsys{W_E}{\check{G}})=\Ind\Coh^\qc(\locsys{W_E}{\check{G}})\) is true in this generality.
\end{remark}
We obtain the following compatibility statement.
\begin{theorem}\label{thm: compatiblity full categorical conjecture}
    Assume that \(D\) satisfies \(H^1(\Gamma,D)=0\) and all primes that are not banal for \(D\) are invertible in \(\Lambda\).
    Assume that \cref{conj: categorical conjecture}(1) holds for \(G\) for a Whittaker datum \((U,\psi)\).
    This induces a Whittaker datum for \(G'\) since there is a canonical bijection between Borel subgroups of \(G\) and \(G'\). \cref{conj: categorical conjecture}(2) holds for \(G'\) for this induced Whittaker datum.
    If \(\ell\in\Lambda^\times\), then \cref{conj: categorical conjecture}(2) for \(G\) implies \cref{conj: categorical conjecture} for \(G'\).
\end{theorem}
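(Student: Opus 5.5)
The plan is to obtain the functor for \(G'\) by base change of the functor for \(G\). By \cref{cor: tensor product formula D(BunG)} and \cref{thm: geometric convolution action is spectral action}, we have
\begin{equation*}
\D(\Bun_{G'})\simeq\Mod_\Lambda\otimes_{\QCoh(\locsys{W_E}{\check{D}})}\D(\Bun_G),
\end{equation*}
where the \(\QCoh(\locsys{W_E}{\check{D}})\)-action on the right is the spectral one. By \cref{cor: kunneth for parameter stack}, the same base change of \(\Ind\Perf^\qc(\locsys{W_E}{\check{G}})\) is \(\Ind\Perf^\qc(\locsys{W_E}{\check{G}'})\). The functor \(-*\mathcal{W}_G\) is \(\Ind\Perf^\qc(\locsys{W_E}{\check{G}})\)-linear, hence in particular \(\QCoh(\locsys{W_E}{\check{D}})\)-linear. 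Applying \(\Mod_\Lambda\otimes_{\QCoh(\locsys{W_E}{\check{D}})}-\) therefore gives a functor \(\Ind\Perf^\qc(\locsys{W_E}{\check{G}'})\to\D(\Bun_{G'})\). By \cref{prop: commutativity different spectral actions} this functor is \(-*p_!\mathcal{W}_G\). A direct computation, using that \(p\) restricted to the open stratum is \([*/G(E)]\to[*/G'(E)]\), identifies \(p_!\mathcal{W}_G\) with \(\mathcal{W}_{G'}\) for the induced Whittaker datum. The same base change applied to the right adjoints expresses \(c_{\mathcal{W}'}\) through \(c_{\mathcal{W}}\). Concretely, \(z_*c_{\mathcal{W}'}\cong c_{\mathcal{W}}p^!\), which follows by adjunction from the relations \(p_!(F*\mathcal{W})\cong z^*F*\mathcal{W}'\).

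For part (1) for \(G'\), I would argue on compact objects. Take \(A,B\in\D(\Bun_{G'})^\omega\). The map \(\Hom(A,B)\to\Hom(c_{\mathcal{W}'}A,c_{\mathcal{W}'}B)\) can be rewritten with the \(!\)-descent of \cref{lem: universal !-descent BunG BunG'}, which presents \(\D(\Bun_{G'})\) as a limit along \(p^!=p^*\). This reduces the comparison to Hom spaces between \(p^!A\) and \(p^!B\) on \(\Bun_G\), taken \(\Bun_D\)-equivariantly. Spectrally, the corresponding reduction uses the \(z^*\)/\(z_*\) adjunction together with the descent of \cref{cor: kunneth for parameter stack}.

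By \cref{lem: pullback compact automorphic side}, \(p^!A\) is compact on each connected component of \(\Bun_G\). So fully faithfulness of \(c_{\mathcal{W}}\) on \(\D(\Bun_G)^\omega\) applies component by component. The \(\Bun_D\)-equivariance then matches the \(\QCoh(\locsys{W_E}{\check{D}})\)-linearity on the spectral side, via \(\pi_0(\Bun_D)\) versus characters of \(Z(\check G)^\Gamma\). This compatibility is the content of \cref{lem: compatible on line bundles}. Passing back through the descent gives fully faithfulness for \(G'\).

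For part (2) for \(G'\), assuming \(\ell\in\Lambda^\times\) and (2) for \(G\), the \(\nilp\) condition is automatic. First I would show \(c_{\mathcal{W}'}\) lands in \(\Coh\). The object \(z_*c_{\mathcal{W}'}A\cong c_{\mathcal{W}}p^!A\) is coherent on each isotypic summand, because each component of \(p^!A\) is compact and (2) holds for \(G\). Since \(z\) is a closed lci-type map, compare \cref{lem: pullback compact spectral side}, coherence of \(z_*\) of an object detects coherence of the object itself. For essential surjectivity, take \(F\in\Coh(\locsys{W_E}{\check{G}'})\). By \cref{lem: pullback compact spectral side}, each isotypic summand of \(z_*F\) is coherent, so by (2) for \(G\) it equals \(c_{\mathcal{W}}(B)\) for some compact \(B\). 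Pushing \(B\) forward along \(p_!\) and using the base-change formula, and the fact that \(z^*z_*F\) generates \(F\) under finite colimits and retracts, we get that \(F\) lies in the thick image of \(c_{\mathcal{W}'}\) on compacts. Since \(c_{\mathcal{W}'}\) is fully faithful there, the image is thick, so \(F\) is in the image.

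The main obstacle I expect is the fully faithfulness transfer. Base change of a fully faithful functor along \(\Mod_\Lambda\otimes_{\QCoh(\locsys{W_E}{\check D})}-\) is not automatically fully faithful. Moreover, compactness of \(p^!A\) holds only componentwise, so Hom spaces become infinite products over \(\pi_0(\Bun_D)\). My first approach would be the limit (descent) description. Each stage of the Čech nerve involves only compact-on-components objects, where the hypothesis for \(G\) applies, and the products commute with the relevant limits.
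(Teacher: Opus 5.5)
Your set-up and part (1) are the paper's argument. The paper uses the same identification \(p_!\mathcal{W}_G\cong\mathcal{W}_{G'}\) through \cref{prop: commutativity different spectral actions} and the same relation \(z_*c_{\mathcal{W}'}\cong c_{\mathcal{W}}p^!\). It also checks fully faithfulness through the cosimplicial limit along \(p^!\) and \(z_*\), using \cref{lem: pullback compact automorphic side}, and uses that \(c_{\mathcal{W}}\) respects the decompositions indexed by \(\pi_1(G)_\Gamma\cong X^*(Z(\check{G})^\Gamma)\), so the infinite products over components reduce to Homs between compact pieces. Two small corrections. The input in that last step is this grading compatibility, which the paper takes from the torus case, not \cref{lem: compatible on line bundles}. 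And at stage \(n\) the hypothesis is applied to \(D^n\times G\), not to \(G\); the paper does this by renaming.

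Part (2) is where you genuinely diverge. The paper never proves the two inclusions by hand. It takes the essential images and passes to \(\Ind\). It then identifies \(\Ind(\mathcal{C})\) with the limit of \(\Ind\Coh^\qc(\locsys{W_E}{\check{D}^n\times\check{G}})\) along \(z_*\), that is, with the bar construction for \(\Mod_\Lambda\otimes_{\QCoh(\locsys{W_E}{\check{D}})}\Ind\Coh^\qc(\locsys{W_E}{\check{G}})\). It concludes by categorical K\"unneth for \(\IndCoh\) together with \(\IndCoh=\Ind\Coh^\qc\) in characteristic \(0\). That route only needs the formal compatibility \(z_*c_{\mathcal{W}'}\cong c_{\mathcal{W}}p^!\), at the price of heavy inputs.

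Your route avoids those inputs, but two things are missing.

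First, \(z\) is not a closed immersion. As in the proof of \cref{lem: pullback compact spectral side}, it is a \(\check{D}^\Gamma\)-torsor \(y\) followed by a closed lci immersion \(x\). So \(z_*F\) and \(z^*z_*F\) are infinite sums, and both your detection claim and your generation claim must be made for a single isotypic summand:
\begin{itemize}
\item \(x_*\) is \(t\)-exact and conservative;
\item \(F\) is recovered, up to twist, as \(y^*\) of one isotypic summand of \(y_*F\);
\item \(M\) is a retract of \(x^*x_*M\), because the self-intersection of \(x\) is formal: it is base changed from the trivial character inside a torus component.
\end{itemize}

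Second, and more seriously, essential surjectivity uses \(c_{\mathcal{W}'}p_!B\cong z^*c_{\mathcal{W}}B\). This is the \emph{other} mate of \(z_*c_{\mathcal{W}'}\cong c_{\mathcal{W}}p^!\), and it does not follow formally from it. There are two ways to supply it.
\begin{itemize}
\item It holds if \(c_{\mathcal{W}}\) preserves colimits, i.e.\ if \(-*\mathcal{W}\) sends \(\Perf^\qc\) to compact objects. Then \(c_{\mathcal{W}}\) is strictly linear over \(\QCoh(\locsys{W_E}{\check{D}})\), since that category is generated by dualizable objects, so \(c_{\mathcal{W}}\) can be base changed.
\item Alternatively, check it after applying the conservative functor \(z_*\). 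By the projection formula and \(p^!p_!B\cong\Lambda_{\Bun_D}\star B\), it reduces to comparing \(c_{\mathcal{W}}(\Lambda_{\Bun_D}\star B)\) with the action of \(i_*\Lambda\) on \(c_{\mathcal{W}}(B)\), where \(i\) is the trivial parameter. These agree because \(i_*\Lambda\) is a sum of dualizable objects whose convolutions with \(B\) lie on distinct components, so that sum is a product and \(c_{\mathcal{W}}\) commutes with it.
\end{itemize}

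With these supplied, your argument is a more elementary alternative to the paper's K\"unneth step.
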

\begin{proof}
    Let us write \(\mathcal{W}_\bullet\) for the various sheaves on \(\Bun_{D^\bullet\times G}\) via the Whittaker datum induced by \((U,\psi)\).
    Observe that \(\Bun_U\cong[*/U(E)]\), so that \(\psi\) is naturally an object of \(\D(\Bun_U)\).
    There is a canonical map of simplicial \(v\)-stacks
    \begin{equation*}
        q_\bullet\from\const_{\Bun_U}\to\Bun_{D^\bullet\times G}
    \end{equation*}
    via functoriality of \(\Bun_{(-)}\).
    Thus \(\mathcal{W}_\bullet=q_{\bullet!}\psi\), and it enhances to a natural transformation \(\mathcal{W}_\bullet\from\const_*\to\D(\Bun_{D^\bullet\times G})\). %
    Writing \(\mathcal{W}_{-1}\) for the sheaf on \(\Bun_{G'}\) induced by \((U,\psi)\) we thus obtain using \cref{prop: commutativity different spectral actions} that the following augmented simplicial diagram commutes:
    \begin{equation}\label{eq: colimit for tensoring spectral action}
                \begin{tikzcd}
        {\dots} \arrow[r, shift left=2] \arrow[r, shift right=2] \arrow[r] & \Ind\Perf^\qc(\locsys{W_E}{\check{D}\times\check{G}}) \arrow[r, shift left] \arrow[r, shift right] \arrow[d, "-*\mathcal{W}_1"] & \Ind\Perf^\qc(\locsys{W_E}{\check{G}}) \arrow[r] \arrow[d, "-*\mathcal{W}_0"] & \Ind\Perf^\qc(\locsys{W_E}{\check{G'}}) \arrow[d, "-*\mathcal{W}_{-1}"] \\
        {\dots} \arrow[r, shift left=2] \arrow[r, shift right=2] \arrow[r] & \D(\Bun_{D\times G}) \arrow[r, shift left] \arrow[r, shift right]                                                         & \D(\Bun_G) \arrow[r]                                                    & \D(\Bun_{G'}).                                                 
        \end{tikzcd}
    \end{equation}
    In fact the augmented simplicial objects are colimit diagrams of their underlying cosimplicial object.
    Passing to right adjoints, we obtain the following diagram of cosimplicial objects, which are in fact limit diagrams of their underlying cosimplicial object:
    \begin{equation*}
        \begin{tikzcd}
{\dots} & \Ind\Perf^\qc(\locsys{W_E}{\check{D}\times\check{G}}) \arrow[l, shift right=2] \arrow[l, shift left=2] \arrow[l] & \Ind\Perf^\qc(\locsys{W_E}{\check{G}}) \arrow[l, shift right] \arrow[l, shift left]     & \Ind\Perf^\qc(\locsys{W_E}{\check{G'}}) \arrow[l]           \\
{\dots} & \D(\Bun_{D\times G}) \arrow[u, "c_{\mathcal{W}_1}"'] \arrow[l, shift right=2] \arrow[l, shift left=2] \arrow[l]  & \D(\Bun_G) \arrow[u, "c_{\mathcal{W}_0}"'] \arrow[l, shift right] \arrow[l, shift left] & \D(\Bun_{G'}). \arrow[u, "c_{\mathcal{W}_{-1}}"'] \arrow[l]
\end{tikzcd}
    \end{equation*}
    Here the bottom arrows are given by \(!\)-pullback and the top arrows are given by \(*\)-pushforward.
    In particular, one cannot restrict the bottom row to compact objects, as \(!\)-pullbacks do not preserve compact objects in this situation.
    However, under the assumption that he non-banal primes of \(D\) are invertible in \(\Lambda\), the failure is not severe.
    More precisely, we claim that under this assumption \(c_{\mathcal{W}_n}\) is fully faithful on the essential image of \(\D(\Bun_{D^m\times G})^\omega\) under \(p_{[n]\to[m]}^!\), where \(p_{[m]\to[n]}\from\Bun_{D^m\times G}\to\Bun_{D^n\times G}\) is the natural map, setting \(\Bun_{D^{-1}\times G}\defined\Bun_{G'}\), whenever \([m]\to[n]\) is injective.\footnote{Observe that for visual clarity we have decided only to draw these arrows in the diagram.}
    Only considering these maps is immaterial for the limit, by \cite[Lemma 6.5.3.7.]{htt}.
    This claim is stable under isomorphisms on the source and target of \(p_{[m]\to[n]}\), so up to renaming \(D\), \(G\) and \(G'\) we may assume that we work with \(p=p_{[-1]\to[0]}\).
    In addition, it suffices to check this claim on a set of compact generators for \(\D(\Bun_{G'})\), so it suffices to check that \(c_{\mathcal{W}_0}\) induces an isomorphism
    \begin{equation*}
        \Hom_{\D(\Bun_G)}(p^!i_{b_1'!}A,p^!i_{b_2'!}B)\cong\Hom_{\Ind\Perf^\qc(\locsys{W_E}{\check{G}})}(c_{\mathcal{W}_0}p^!i_{b_1'!}A,c_{\mathcal{W}_0}p^!i_{b_2'!}B)
    \end{equation*}
    for \(b_1',b_2'\in\lvert\Bun_{G'}\rvert\) and \(A\in\D(\Bun_{G'}^{b_1'})^\omega\), \(B\in\D(\Bun_{G'}^{b_2'})^\omega\).
    We write 
    \begin{equation*}
        \Hom_{\D(\Bun_G)}(p^!i_{b_1'!}A,p^!i_{b_2'!}B)\cong\prod_{\kappa,\kappa'\in\pi_0(\Bun_G)}\Hom_{\D(\Bun_G)}((p^!i_{b_1'!}A)_\kappa,(p^!i_{b_2'!}B)_{\kappa'}),
    \end{equation*}
    where \((-)_\kappa\) is the direct factor corresponding to the connected component \(\kappa\).
    The functor \(c_{\mathcal{W}_0}\) preserves the decomposition via connected components on the source by \(\pi_0\Bun_G\cong\pi_1(G)_{\Gamma}\) and the decomposition via central grading by \(X^*(Z(\check{G})^\Gamma)\) under the isomorphism \(\pi_1(G)_\Gamma\cong X^*(Z(\check{G})^\Gamma)\), this is immediate from \cite[Lemma 5.3.3.]{categorical-fargues-tori}.
    It is therefore sufficient to show that \((p^!i_{b_1'!}A)_\kappa\) is compact.
    This is \cref{lem: pullback compact automorphic side}.

    Let us now turn our attention to the second part of the theorem, concerning the case when \(\ell\in\Lambda^\times\), i.e. \(\Lambda\) is a \(\bbQ_\ell\)-algebra.
    The parallel claim on the spectral side is \cref{lem: pullback compact spectral side}.
    Thus we have a diagram
    \begin{equation*}
        \begin{tikzcd}
{\dots} & \Coh^\qc(\locsys{W_E}{\check{D}\times\check{G}}) \arrow[l, shift right=2] \arrow[l, shift left=2] \arrow[l]            & \Coh^\qc(\locsys{W_E}{\check{G}}) \arrow[l, shift right] \arrow[l, shift left]                 & \mathcal{C} \arrow[l]                                              \\
{\dots} & \D(\Bun_{D\times G})^\omega \arrow[u, "\bbL_{\mathcal{W}_1}^\omega"'] \arrow[l, shift right=2] \arrow[l, shift left=2] \arrow[l] & \D(\Bun_G)^\omega \arrow[u, "\bbL_{\mathcal{W}_0}^\omega"'] \arrow[l, shift right] \arrow[l, shift left] & \D(\Bun_{G'})^\omega. \arrow[u, "\bbL_{\mathcal{W}_{-1}}^\omega"'] \arrow[l]
\end{tikzcd}
    \end{equation*}
    Here we define \(\mathcal{C}\) to be the essential image of \(c_{\mathcal{W}_{-1}}\) and \(\bbL_{\mathcal{W}_{\bullet}}^\omega\) to be \(c_{\mathcal{W}_{\bullet}}\), but whose target is restriced to the essential image.
    We warn the reader that neither the top nor the bottom row are limit diagrams.
    Passing to Ind-categories the bottom row becomes a limit diagram and since all \(\Ind(\bbL_{\mathcal{W}_{\bullet}}^\omega)\) are equivalences, the top row is also a limit diagram.
    Passing to left adjoints, we see that \(\Ind(\mathcal{C})\) is the colimit of 
    \begin{equation*}
        \begin{tikzcd}
{\dots} \arrow[r, shift left=2] \arrow[r, shift right=2] \arrow[r] & \Ind\Coh^\qc(\locsys{W_E}{\check{D}\times\check{G}}) \arrow[r, shift left] \arrow[r, shift right] & \Ind\Coh^\qc(\locsys{W_E}{\check{G}}) 
\end{tikzcd}
    \end{equation*}
    Using categorical K\"unneth, which in this situation follows from \cite[Proposition 4.6.2.]{indcoh-gaitsgory}, extended to stacks using \cite[Theorem 3.24.]{kesting2025categoricalkunnethformulasanalytic},  we see that we have
    \begin{equation*}
        \Mod_{\Lambda}\otimes_{\QCoh(\locsys{W_E}{\check{D}})}\Ind\Coh^\qc(\locsys{W_E}{\check{G}})\simeq\Ind(\mathcal{C}),
    \end{equation*}
    as the colimit is precisely the bar construction for the relative tensor product on the right hand side, using the equivalence \(\QCoh(\locsys{W_E}{\check{D}})\simeq\Ind\Coh^\qc(\locsys{W_E}{\check{D}})\).
    This gives
    \begin{equation*}
        \Ind\mathcal{C}\simeq\IndCoh(\locsys{W_E}{\check{G}'})=\Ind\Coh^\qc(\locsys{W_E}{\check{G}'})
    \end{equation*}
    as desired.
\end{proof}
Let us record the following consequence from the proof of \cref{thm: compatiblity full categorical conjecture}.
\begin{proposition}
    In the setting of the previous theorem, we have
    \begin{equation*}
        -*\mathcal{W}_{-1}\cong (-*\mathcal{W})\otimes_{\Ind\Perf^\qc(\locsys{W_E}{\check{D}})}\Mod_\Lambda,
    \end{equation*}
    where we write \(\mathcal{W}_{-1}\) for the sheaf on \(\Bun_{G'}\) induced by \((U,\psi)\)
\end{proposition}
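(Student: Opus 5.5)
The plan is to read the claimed isomorphism off the augmented simplicial diagram \eqref{eq: colimit for tensoring spectral action} from the proof of \cref{thm: compatiblity full categorical conjecture}. Both rows there are colimit diagrams in \(\Pr\) of their underlying simplicial objects.

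First, identify the top row. By \cref{cor: kunneth for parameter stack}, its colimit is \(\Ind\Perf^\qc(\locsys{W_E}{\check{G}})\otimes_{\Ind\Perf^\qc(\locsys{W_E}{\check{D}})}\Mod_\Lambda\simeq\Ind\Perf^\qc(\locsys{W_E}{\check{G}'})\). The underlying simplicial object is the bar construction: one checks that \(\Ind\Perf^\qc(\locsys{W_E}{\check{D}^n\times\check{G}})\simeq\Ind\Perf^\qc(\locsys{W_E}{\check{D}})^{\otimes n}\otimes\Ind\Perf^\qc(\locsys{W_E}{\check{G}})\), which is \cref{cor: kunneth for parameter stack} applied in reverse (or plain K\"unneth for products). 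The face maps are the pullbacks along the action and projection maps. The augmentation to \(\Ind\Perf^\qc(\locsys{W_E}{\check{G}'})\) is \(z^*\) by \cref{lem: kunneth indperf}.

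Next, identify the bottom row in the same way. By \cref{cor: tensor product formula D(BunG)}, together with \cref{lem: cartier dual BunT QCoh(ParT)} and \cref{thm: geometric convolution action is spectral action}, its colimit is \(\D(\Bun_G)\otimes_{\Ind\Perf^\qc(\locsys{W_E}{\check{D}})}\Mod_\Lambda\simeq\D(\Bun_{G'})\). Here \cref{thm: geometric convolution action is spectral action} is needed so that the convolution module structure matches the spectral one. The augmentation is \(p_!\), and the unit augmentation \(\D(\Bun_D)\to\Mod_\Lambda\) (\(\Gamma_c\)) corresponds to \(i^*\) at the trivial parameter.

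Then check that the vertical maps \(-*\mathcal{W}_n\) form a map of simplicial objects which, levelwise, is \(\mathrm{id}^{\otimes n}\otimes(-*\mathcal{W})\), i.e. they are \(\Ind\Perf^\qc(\locsys{W_E}{\check{D}})\)-linear. This linearity is exactly \cref{prop: commutativity different spectral actions}, applied to \(\check{D}^n\times\check{G}\to\check{D}^{n-1}\times\check{G}\), combined with \cref{thm: geometric convolution action is spectral action}. The Whittaker sheaves \(\mathcal{W}_n=q_{n!}\psi\) are compatible under the face maps \(p_!\), because \(q_\bullet\) is a map of simplicial stacks and \(!\)-pushforward is functorial. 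Hence the induced map on colimits is by definition \((-*\mathcal{W})\otimes_{\Ind\Perf^\qc(\locsys{W_E}{\check{D}})}\Mod_\Lambda\). Commutativity of the rightmost square, \(p_!(-*\mathcal{W}_0)\cong z^*(-)*\mathcal{W}_{-1}\), which again follows from \cref{prop: commutativity different spectral actions} and \(p_!\mathcal{W}_0=\mathcal{W}_{-1}\), then identifies this map with \(-*\mathcal{W}_{-1}\).

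The main obstacle is coherence rather than the objectwise identification. The levelwise isomorphisms must assemble into an equivalence of simplicial diagrams in \(\Pr\), i.e. genuinely \(\Ind\Perf^\qc(\locsys{W_E}{\check{D}})\)-linear functors, not just isomorphisms on objects. I would handle this by upgrading \cref{prop: commutativity different spectral actions} to an equivalence of module functors. Via the description of maps out of \(\Perf(\locsys{W_E}{\check{G}})\) as compatible families of \(\Rep((\check{G}\rtimes Q)^I)\)-actions used in its proof, the base-change isomorphisms there are natural in \(V\) and \(I\), which gives the required linearity. With that in hand, the identification of colimits is formal.
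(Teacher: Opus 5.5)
Your proposal is correct and follows the paper's argument, with more detail: the paper's one-line proof reads the isomorphism off the augmented simplicial diagram \cref{eq: colimit for tensoring spectral action}, whose rows are colimit diagrams identified with bar constructions via \cref{cor: kunneth for parameter stack}, \cref{cor: tensor product formula D(BunG)} and \cref{thm: geometric convolution action is spectral action}, and it leaves implicit what you make explicit (\(p_!\mathcal{W}_0\cong\mathcal{W}_{-1}\) via \(q_\bullet\), and the coherence of the linearity data). One small correction: identifying \(-*\mathcal{W}_n\) levelwise with \(\id^{\otimes n}\otimes(-*\mathcal{W})\) needs geometric Satake, the spectral action and the Whittaker sheaves to be compatible with products of groups (\(\mathcal{W}_n\cong\mathcal{W}_D^{\boxtimes n}\boxtimes\mathcal{W}\), and \(\mathrm{triv}\boxtimes V\) acting as \(\id\boxtimes T_V\)), whereas \cref{prop: commutativity different spectral actions} only makes the vertical maps compatible with the face maps.
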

\begin{proof}
    It follows from the fact that transformation of augmented simplicial objects \cref{eq: colimit for tensoring spectral action} is a colimit diagram of its underlying simplicial object, that the functor \(\Ind\Perf^\qc(\locsys{W_E}{\check{G}'})\to\D(\Bun_{G'})\) given by tensoring the given functor
    \begin{equation*}
        \Ind\Perf^\qc(\locsys{W_E}{\check{G}})\to\D(\Bun_G)
    \end{equation*}
    with \(\Mod_\Lambda\otimes_{\Ind\Perf^\qc(\locsys{W_E}{\check{D}})}-\) agrees with acting via the spectral action on \(\mathcal{W}_{-1}\).
\end{proof}

%% file: compatibility-langlands-shahidi.tex
\section{The case of (weakly) Langlands-Shahidi type parameters}\label{sec: (weakly) Langlands-Shahidi type}
We are sometimes interested in localizing the \(\ell\)-adic categorical Langlands conjectures over open loci of \(\locsys{W_E}{\check{G}}\).
Two very important cases are for \emph{Langlands-Shahidi type} parameters and \emph{weakly Langlands-Shahidi type} parameters.
Let us recall their definition.
\begin{definition}%
    A semisimple \(L\)-parameter \(\varphi\) with cuspidal support \((\check{M},\varphi_{\check{M}})\) is called \defword{Langlands-Shahidi type}, if \(R\Gamma(W_E,r_\ad^N\varphi_{\check{M}})\oplus R\Gamma(W_E,r_\ad^N\varphi_{\check{M}}^\vee)=0\), where \(N\) is the unipotent radical to a parabolic \(P\) whose Levi is \(M\) and \(r_\ad^N\) is the \(\check{M}\)-reprentation given by the adjoint action of \(\check{M}\) on \(\Lie(\check{N})\).
    We write \(\locsyscoarsesub{W_E}{\check{G}}{\LSt}\) and \(\locsyscoarsesub{W_E}{\check{G}}{\wLSt}\) for the locus of Langlands-Shahidi respectively weakly Langlands-Shahidi type in the coarse moduli, and \(\locsyssub{W_E}{\check{G}}{\LSt}\) and \(\locsyssub{W_E}{\check{G}}{\wLSt}\) for their preimage in the stack of parameters.
    We say it is \defword{weakly Langlands-Shahidi type} if \(H^2\) of the complex vanishes.
    We say a sheaf \(\pi\) of \(\Bun_G\) is (weakly) Langlands Shahidi type if all irreducible subquotients have parameter of (weakly) Langlands Shahidi type in case that \(\pi\) is compact in \(\D(G(E))\) and in general if \(\pi\) can be written as a colimit of (weakly) Langlands Shahidi type representation.
    We write \(\D^\LSt(\Bun_G)\) and \(\D^\wLSt(\Bun_G)\) for the subcategory of sheaves of Langlands-Shahidi type respectivley (weakly) Langlands-Shahidi type.
\end{definition}
\begin{remark}
    As discussed in \cite[Definition 2.4.]{zou2025categoricallocallanglandsmathrmgln}, one can alternatively define the locus of parameters of Langlands-Shahidi type with cuspidal support \(\check{M}\) at the level of the parameter stacks by defining that they are the image under the map \(f\from \locsyssub{W_E}{\check{M}}{\irred}\to\locsys{W_E}{\check{G}}\) of the \'etale locus of \(f\).
    As checked in \cite[Definition 2.4.]{zou2025categoricallocallanglandsmathrmgln}, this locus is open as \(L_f\) is perfect, and \'etale maps are open.
    It follows that parameters of Langlands-Shahidi type defined via this stacky approach are automatically semi-simple and that their locus is open.

    In fact, this discussion also works for weakly Langlands-Shahidi type parameters, by changing the following aspects.
    Instead of \(L_f\), one uses \(H^1(L_f)\).
    By the computation of the cotangent complex one sees that vanishing of \(H^1(L_f)\) is is equivalent to the condition defining weakly Langlands-Shahidi type.
    By Tate duality \(H^1(L_f)\) is the top cohomology group that does not vanish of \(L_f\), it follows that this is a coherent sheaf.
    In partiular the vanishing locus is open, and \(f\) restricted to this locus is smooth by \cite[Propositon 8.7.1.]{khan2026lecturesalgebraicstacks}.
    It follows that the locus of weakly Langlands-Shahidi type parameters is open as well.
\end{remark}
\begin{remark}
    Equivalently, we have equivalences of categories
    \begin{align*}
        \D^{\LSt}(\Bun_G)&\simeq\Ind\Perf(\locsyscoarsesub{W_E}{\check{G}}{\LSt})\otimes_{\Ind\Perf(\locsyscoarse{W_E}{\check{G}})}\D(\Bun_G)\\
        \shortintertext{and}
        D^{\wLSt}(\Bun_G)&\simeq\Ind\Perf(\locsyscoarsesub{W_E}{\check{G}}{\wLSt})\otimes_{\Ind\Perf(\locsyscoarse{W_E}{\check{G}})}\D(\Bun_G).
    \end{align*}
    From this presentation we see that inherits a semi-orthogonal decomposition according to \(\lvert\Bun_G\rvert\), and we see that one can define these notions for any locally closed substack of \(\Bun_G\).
    This description also makes it clear that there are localization functors \(\D(\Bun_G)\to\D^\LSt(\Bun_G)\) as well as \(\D(\Bun_G)\to\D^\wLSt(\Bun_G)\), which we will denote by \(A\mapsto A^\LSt\) and \(A\mapsto A^\wLSt\) respectively.
\end{remark}
Evidently, the definition only depends on the adjoint quotient of \(G\), in particular we obtain the following.
\begin{lemma}\label{lem: langlands-shahidi type invariant under central isogenies}
    Let \(G_1\to G_2\) be a morphism of connected reductive groups inducing an isomorphism on the adjoint quotient.
    Thus we obtain a map \(\iota\from\check{G}_2\to\check{G}_1\).
    A parameter \(\varphi\) is (weakly) Langlands-Shahidi type for \(\check{G}_2\) if and only if \(\iota\circ\varphi\) is (weakly) Langlands-Shahidi type for \(\check{G}_1\).
\end{lemma}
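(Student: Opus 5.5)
The plan is to reduce the statement to a comparison of the two defining conditions through cuspidal supports, and then check that each ingredient of the definition is unchanged by the central isogeny \(\iota\from\check{G}_2\to\check{G}_1\). The hypothesis that \(G_1\to G_2\) is an isomorphism on adjoint quotients means, dually, that \(\iota\) has central kernel and central cokernel. Concretely, \(\iota\) restricts to an isomorphism of adjoint quotients \(\check{G}_2^\ad\cong\check{G}_1^\ad\), and it induces an isomorphism on Lie algebras of derived groups modulo centers. This isomorphism is \(\Gamma\)-equivariant, since the map comes from a map of \(E\)-groups.

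\textbf{Step 1: matching the combinatorial data.} Parabolics and Levis of \(\check{G}_1\) and \(\check{G}_2\) correspond bijectively under \(\iota\), with \(\check{M}_2=\iota^{-1}(\check{M}_1)\) and \(\check{P}_2=\iota^{-1}(\check{P}_1)\). Unipotent radicals map isomorphically, \(\check{N}_2\cong\check{N}_1\), because \(\ker\iota\) is central and therefore meets no unipotent group. Under this identification the adjoint action of \(\check{M}_2\) on \(\Lie(\check{N}_2)\) is the pullback along \(\iota\) of the \(\check{M}_1\)-representation \(r^N_\ad\). This works because the adjoint action factors through the adjoint group, which \(\iota\) does not change.

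\textbf{Step 2: matching semisimplicity and cuspidal supports.} I would next show three things. First, \(\varphi\) is semisimple if and only if \(\iota\circ\varphi\) is. Second, \(\varphi\) factors minimally through \(\check{M}_2\) if and only if \(\iota\circ\varphi\) factors minimally through \(\check{M}_1\). Third, \(\varphi_{\check{M}_2}\) is elliptic (irreducible) if and only if \(\iota\circ\varphi_{\check{M}_2}\) is.

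The tool is the standard criterion: a parameter is semisimple, or elliptic in a Levi, according to whether its image lies in parabolics and Levis. Parabolic subgroups containing the image correspond under \(\iota\), because both \(\iota\) and \(\iota^{-1}\) preserve parabolics (the centers lie in every parabolic). So the cuspidal support of \(\iota\circ\varphi\) is \((\check{M}_1,\iota\circ\varphi_{\check{M}_2})\).

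\textbf{Step 3: conclusion, and the main obstacle.} By Steps 1 and 2, \(r^N_\ad\varphi_{\check{M}_2}\) and \(r^N_\ad(\iota\circ\varphi_{\check{M}_2})\) are literally the same \(W_E\)-representation. The same holds for their duals. Hence the complexes \(R\Gamma(W_E,-)\) coincide, so their total vanishing and their \(H^2\)-vanishing agree, which gives both the Langlands-Shahidi and the weakly Langlands-Shahidi case.

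The only delicate point is Step 2 when \(\iota\) is not surjective, for instance \(\check{G}_1=\check{G}_2\times\check{T}\) up to isogeny. There I would argue that \(\check{G}_1=\iota(\check{G}_2)\cdot Z(\check{G}_1)^\circ\), and that parabolics of \(\check{G}_1\) are exactly \(\iota(\check{P}_2)\cdot Z(\check{G}_1)^\circ\). For the stacky definition via étaleness of \(\locsyssub{W_E}{\check{M}}{\irred}\to\locsys{W_E}{\check{G}}\), one could instead note that the cotangent complex \(L_f\) is computed by \(R\Gamma(W_E,\mathfrak{g}/\mathfrak{m})\). This is insensitive to central modifications, since \(\mathfrak{g}/\mathfrak{m}\) only sees the adjoint group. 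I would use this as a cross-check.
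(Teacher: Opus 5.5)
Your proposal is correct and is essentially the paper's argument. The paper simply observes that the definition only depends on the adjoint quotient of \(G\), and your Steps 1--3 supply the details behind that remark: parabolics, Levis and unipotent radicals correspond via \(\iota^{-1}\), cuspidal supports match, and \(r^N_\ad\varphi_{\check{M}}\) is literally the same \(W_E\)-representation because \(r^N_\ad\) factors through the adjoint group. The ``delicate'' non-surjective case is already covered by the preimage correspondence \(\check{P}_2=\iota^{-1}(\check{P}_1)\), since every parabolic or Levi of \(\check{G}_1\) contains \(Z(\check{G}_1)\) and \(\check{G}_1=\iota(\check{G}_2)\cdot Z(\check{G}_1)^\circ\).
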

\begin{lemma}\label{lem: (weakly) Langlands-Shahidi type compatible under * and ! pullback}
    Then \(p^*\) and \(p^!\) restrict to functors \(\D^\LSt(\Bun_{G'})\to\D^\LSt(\Bun_{G})\), similarly for weakly Langlands-Shahidi type parameters.
\end{lemma}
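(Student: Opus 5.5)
First, by \cref{lem: universal !-descent BunG BunG'} we have \(p^!\cong p^*\), so it suffices to treat \(p^*\). Both \(\D^\LSt(\Bun_{G'})\) and \(\D^\wLSt(\Bun_{G'})\) are generated under colimits by compact objects of (weakly) Langlands-Shahidi type, and \(p^*\) commutes with colimits. Hence it is enough to check that \(p^*\) sends such a compact generator into \(\D^\LSt(\Bun_G)\), respectively \(\D^\wLSt(\Bun_G)\).

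The cleanest route is via the spectral description. We use the presentation
\begin{equation*}
    \D^{\LSt}(\Bun_G)\simeq\Ind\Perf(\locsyscoarsesub{W_E}{\check{G}}{\LSt})\otimes_{\Ind\Perf(\locsyscoarse{W_E}{\check{G}})}\D(\Bun_G).
\end{equation*}
In this description an object \(A\) lies in \(\D^\LSt(\Bun_G)\) if and only if the localization map \(A\to A^\LSt\) is an equivalence. Equivalently, \(A\) is annihilated by the spectral action of sheaves supported on the closed complement of \(\locsyscoarsesub{W_E}{\check{G}}{\LSt}\).

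So we need \(p^*\) to intertwine the spectral actions in the following sense. For \(\mathcal{F}\in\Ind\Perf(\locsys{W_E}{\check{G}})\), pulling back along \(z\from\locsys{W_E}{\check{G}'}\to\locsys{W_E}{\check{G}}\), we want
\begin{equation*}
    p^*(z^*\mathcal{F}*B)\cong\mathcal{F}*p^*B.
\end{equation*}
This is the right-adjoint mirror of \cref{prop: commutativity different spectral actions}. I would prove it by the same Hecke-stack computation: take the cartesian-type diagram relating \(\Hck_G^I\) and \(\Hck_{G'}^I\) through \(\zeta\), apply base change for \(p^*\) (now \(*\)-pullback rather than \(!\)-pushforward), and use \(\zeta_!S_V\cong\bar{p}_H^*S_{V'}\) from compatibility of geometric Satake with central isogenies. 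The excursion algebra is then intertwined as well: \(\Exc(G)\to\Exc(G')\) acts compatibly on \(p^*B\) and on \(B\).

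Now let \(\mathcal{F}\) be supported on the non-LSt locus of \(\locsys{W_E}{\check{G}}\). By \cref{lem: langlands-shahidi type invariant under central isogenies}, \(z^{-1}\) of the LSt locus of \(\check{G}\) is exactly the LSt locus of \(\check{G}'\). Hence \(z^*\mathcal{F}\) is supported on the non-LSt locus for \(G'\), so \(z^*\mathcal{F}*B=0\). The intertwining identity then gives \(\mathcal{F}*p^*B=0\), which is the claim. The weakly Langlands-Shahidi case is identical, since the lemma covers both notions.

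The main obstacle is the intertwining identity for \(p^*\) rather than \(p_!\), together with the passage from ind-perfect complexes on the coarse moduli to the relevant idempotent localization. If the base change for \(p^*\) proves awkward, I would instead argue on irreducible subquotients of compact objects. Take \(B=i_{b'!}A\) with \(A\) an irreducible-type generator. Then \(p^*B\) restricted to a component is the inflation of \(A\) along the surjection \(G_b(E)\to G'_{b'}(E)\), as in the proof of \cref{lem: pullback compact automorphic side}. Its irreducible subquotients are inflations of those of \(A\), and compatibility of the semisimplified correspondence with the map \(G\to G'\) (\cite[Theorem IX.6.1]{geometrization}) shows that their parameters are \(\iota\circ\varphi\). \cref{lem: langlands-shahidi type invariant under central isogenies} then concludes.
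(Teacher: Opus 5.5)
Your proposal is correct and is essentially the paper's argument. Both reduce to \(p^*\) via \(p^!\cong p^*\) (\cref{lem: universal !-descent BunG BunG'}) and then use that \(p^*\) intertwines the Hecke/excursion actions along \(z\), combined with \cref{lem: langlands-shahidi type invariant under central isogenies}; the paper compresses this intertwining into a citation of \cite[Theorem IX.6.1]{geometrization}, which you instead re-derive by the Hecke-stack base-change computation, and your fallback via irreducible subquotients is the direct use of that theorem's statement.
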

\begin{proof}
    For \(p^*\), this is \cite[Theorem IX.6.1]{geometrization} and \(p^*\cong p^!\) by \cref{lem: universal !-descent BunG BunG'}.
\end{proof}
From this we deduce favorable descent properties for localized categories of sheaves on \(\Bun_G\).
\begin{lemma}\label{lem: !-descent of LSt and wLSt}
    We have an equivalence of categories
    \begin{align*}
        \Mod_{\Lambda}\otimes_{(\D(\Bun_D),\star)}\D^\LSt(\Bun_G)&\simeq\D^\LSt(\Bun_{G'})\\
        \shortintertext{and}
        \Mod_{\Lambda}\otimes_{(\D(\Bun_D),\star)}\D^\wLSt(\Bun_G)&\simeq\D^\wLSt(\Bun_{G'}).
    \end{align*}
    In particular \(p_!\) preserves the (weakly) Langlands-Shahidi type condition.
\end{lemma}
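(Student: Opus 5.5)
The plan is to deduce the statement from \cref{cor: tensor product formula D(BunG)} by base change along the localization. The localization \(A\mapsto A^\LSt\) (resp.\ \(A^\wLSt\)) is given by tensoring with \(\Ind\Perf(\locsyscoarsesub{W_E}{\check{G}}{\LSt})\) over \(\Ind\Perf(\locsyscoarse{W_E}{\check{G}})\), i.e.\ by an idempotent algebra acting through the spectral action. The first thing to check is that this action commutes with the convolution action of \((\D(\Bun_D),\star)\). By \cref{thm: geometric convolution action is spectral action}, the convolution action is itself a restriction of the spectral action along \(z^*\), and the spectral action is symmetric monoidal. Hence the two actions commute, and \(\D^\LSt(\Bun_G)\) is a \(\D(\Bun_D)\)-submodule and localization of \(\D(\Bun_G)\). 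Tensoring \(\D(\Bun_G)\) with \(\Mod_\Lambda\) over \(\D(\Bun_D)\) therefore commutes with tensoring with the idempotent algebra, which gives
\begin{equation*}
\Mod_\Lambda\otimes_{(\D(\Bun_D),\star)}\D^\LSt(\Bun_G)\simeq \Ind\Perf(\locsyscoarsesub{W_E}{\check{G}}{\LSt})\otimes_{\Ind\Perf(\locsyscoarse{W_E}{\check{G}})}\D(\Bun_{G'}),
\end{equation*}
where the right hand side uses the induced action on \(\D(\Bun_{G'})\).

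It remains to identify this last category with \(\D^\LSt(\Bun_{G'})\). The \(\Ind\Perf(\locsyscoarse{W_E}{\check{G}})\)-action on \(\D(\Bun_{G'})\) obtained by descent is the restriction of the spectral action of \(G'\) along the map of coarse moduli induced by \(\check{G}'\to\check{G}\); this is the content of \cref{prop: commutativity different spectral actions}, which says \(p_!\) intertwines the two spectral actions. By \cref{lem: langlands-shahidi type invariant under central isogenies}, the preimage of the \(\LSt\) locus of \(\check{G}\) in the coarse moduli of \(\check{G}'\) is exactly the \(\LSt\) locus of \(\check{G}'\). Hence base-changing the idempotent algebra gives the idempotent algebra cutting out \(\D^\LSt(\Bun_{G'})\), and we obtain the desired equivalence. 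The weakly Langlands-Shahidi case is identical.

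The main obstacle is making the idempotent base change rigorous. Concretely, one must check that the restriction of an open-support idempotent along the coarse-moduli map gives the corresponding idempotent for \(G'\). An alternative, more hands-on route is to use the explicit bar construction from the proof of \cref{cor: tensor product formula D(BunG)} as a limit along \(!\)-pullbacks. By \cref{lem: (weakly) Langlands-Shahidi type compatible under * and ! pullback} together with \cref{lem: langlands-shahidi type invariant under central isogenies} applied to \(D^n\times G\to G\), every \(p^!\) in the cosimplicial diagram preserves the \(\LSt\) subcategories. The fully faithful inclusion of \(\LSt\) subcategories then gives a map of limit diagrams, whose limit \(\D^\LSt(\Bun_{G'})\) is cut out as the objects whose pullbacks are \(\LSt\). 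This uses that an object of \(\D(\Bun_{G'})\) is \(\LSt\) iff \(p^*\) of it is, which follows from \(p^*\) being conservative and compatible with the spectral action.

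The final claim is then read off from the colimit description, exactly as in \cref{cor: tensor product formula D(BunG)}. The map from \(\D^\LSt(\Bun_G)\) into the tensor product identifies with \(p_!\), which therefore lands in \(\D^\LSt(\Bun_{G'})\).
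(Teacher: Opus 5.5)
Your proposal is correct. The route you call ``alternative, more hands-on'' is exactly the paper's proof. There, \(\D(\Bun_{G'})\) is written as the limit of \(\D(\Bun_D)^{\otimes n}\otimes\D(\Bun_G)\simeq\D(\Bun_{D^n}\times\Bun_G)\) along \(!\)-pullbacks. Preservation of the \(\LSt\) subcategories comes from \cref{lem: (weakly) Langlands-Shahidi type compatible under * and ! pullback}. Detection comes from \(p^!\cong p^*\) (\cref{lem: universal !-descent BunG BunG'}) together with \cite[Theorem IX.6.1.]{geometrization}, which is your ``\(p^*\) is conservative and compatible with the spectral action''.

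The one ingredient the paper states and you leave implicit is this: tensoring the Verdier quotient \(\D(\Bun_G)\to\D^\LSt(\Bun_G)\) with any presentable category keeps it a Verdier quotient. That is what lets you regard \(\D(\Bun_D)^{\otimes n}\otimes\D^\LSt(\Bun_G)\) as a full subcategory of the \(n\)-th term, and so speak of a map of limit diagrams.

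Your primary route is genuinely different. Instead of checking the \(\LSt\) condition objectwise in the limit, you proceed in three steps:
\begin{itemize}
\item you treat \(\D^\LSt\) as a smashing localization by an idempotent pulled back from the coarse moduli space;
\item you commute it past \(\Mod_\Lambda\otimes_{(\D(\Bun_D),\star)}-\) using \cref{thm: geometric convolution action is spectral action};
\item you base change the idempotent along the map of coarse moduli spaces via \cref{prop: commutativity different spectral actions} and \cref{lem: langlands-shahidi type invariant under central isogenies}.
\end{itemize}

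What this buys: it is more conceptual, and it identifies the descended action on \(\D(\Bun_{G'})\) with the restricted spectral action of \(G'\). That is the kind of statement used later in \cref{thm: Langlands-Shahidi type compatibility}.

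What it costs: it relies on the Cartier-duality identification of the convolution action with the spectral action, a torus-specific input tied to \(H^1(\Gamma,D)=0\). The paper's argument needs only \(p^!\cong p^*\), K\"unneth, and the compatibility of \(p^*\) with excursion operators.

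The obstacle you flag is milder than it looks. You need not match module structures coherently: \(p_!\) generates \(\D(\Bun_{G'})\) under colimits and intertwines the two idempotents objectwise. Hence both localized subcategories equal the one generated under colimits by \(p_!\D^\LSt(\Bun_G)\).
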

\begin{proof}
    We do the proof for \(\D^\LSt(\Bun_G)\), the case for weakly Langlands-Shahidi type is completely analogous.
    Recall that \(\D(\Bun_G)\to\D^\LSt(\Bun_G)\) is a Verdier quotient functor, so that \(\mathcal{C}\otimes\D(\Bun_G)\to\mathcal{C}\otimes\D^\LSt(\Bun_G)\) for any presentable category \(\mathcal{C}\) remains a Verdier quotient, in particular we still have a fully faithful functor \(\mathcal{C}\otimes\D^\LSt(\Bun_G)\injto\mathcal{C}\otimes\D(\Bun_G)\).
    As in the proof of \cref{cor: tensor product formula D(BunG)}, we need to check that under the limit presentation
    \begin{equation*}
        \D(\Bun_{G'})\simeq\lim_{n\in\Delta}\D(\Bun_D)^{\otimes n}\otimes\D(\Bun_G)
    \end{equation*}
    the subcategory \(\D^\LSt(\Bun_{G'})\) on the left hand side matches up with the subcategory \(\lim_{n\in\Delta}\D(\Bun_D)^{\otimes n}\otimes\D^\LSt(\Bun_G)\) on the right hand side.
    Appealing to \cref{lem: (weakly) Langlands-Shahidi type compatible under * and ! pullback}, it suffices to see that \(\lim_{n\in\Delta}\D^\LSt(\Bun_D^n\times\Bun_G)\) identifies with \(\D^\LSt(\Bun_{G'})\).
    It suffices to see that \(p^!\) detects the Langlands-Shahidi type condition, i.e. \((p^!)^{-1}(\D^\LSt(\Bun_{G'}))\subset\D^\LSt(\Bun_G)\).
    However \(p^!\cong p^*\) by \cref{lem: universal !-descent BunG BunG'} and thus the claim follows from \cite[Theorem IX.6.1.]{geometrization}
\end{proof}
\begin{lemma}
    We have an equivalence of categories
    \begin{align*}
        \Mod_{\Lambda,\geq0}\otimes_{(\D(\Bun_D)_{\geq0},\star)}\D^\LSt(\Bun_G)_{\geq0}&\simeq\D^\LSt(\Bun_{G'})_{\geq0}\\
        \shortintertext{and}
        \Mod_{\Lambda,\geq0}\otimes_{(\D(\Bun_D)_{\geq0},\star)}\D^\wLSt(\Bun_G)&\simeq\D^\wLSt(\Bun_{G'})_{\geq0}.
    \end{align*}
\end{lemma}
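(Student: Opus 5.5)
We combine the argument of \cref{lem: descent t-structure} with the localization argument of \cref{lem: !-descent of LSt and wLSt}. We treat the Langlands-Shahidi case; the weakly Langlands-Shahidi case is identical, using the \(\wLSt\) versions of the same inputs. Concretely, we must show that the geometric realization
\begin{equation*}
    \colim_{n\in\Delta^\op}\D^\LSt(\Bun_{D^n}\times\Bun_G)_{\geq0}\to\D^\LSt(\Bun_{G'})_{\geq0}
\end{equation*}
in \(\Pr\) is an equivalence, with \(!\)-pushforwards as transition maps and \(p_!\) as the augmentation.

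First we must check that this diagram makes sense. By \cref{lem: !-descent of LSt and wLSt} (applied also to the products \(D^n\times G\), renaming as in the proof of \cref{thm: compatiblity full categorical conjecture}), the \(!\)-pushforwards preserve the \(\LSt\) subcategories. The perverse \(t\)-structure on \(\D(\Bun_G)\) restricts to \(\D^\LSt(\Bun_G)\): the localization \(A\mapsto A^\LSt\) is given by the spectral action of an idempotent algebra on the coarse moduli. That action is \(t\)-exact stratum by stratum, since on each stratum it is the action of excursion operators, i.e.\ a projection onto a union of Bernstein-type summands. This \(t\)-exactness is what has to be checked carefully. Right \(t\)-exactness of the transition maps then follows as in \cref{lem: descent t-structure}, by comparing the dimensions of points.

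Second, we use the same Lurie-type criterion as in \cref{lem: descent t-structure}. The colimit in \(\Pr\) of this diagram is computed as the limit of right adjoints, and the comparison functor is an equivalence once \(p_!\) generates \(\D^\LSt(\Bun_{G'})_{\geq0}\) under colimits. Equivalently, its right adjoint on the \(\LSt\) parts is conservative. That right adjoint is \((\tau_{\geq0}p^!)\) followed by the localization to \(\LSt\). By \cref{lem: (weakly) Langlands-Shahidi type compatible under * and ! pullback}, \(p^!\cong p^*\) already lands in \(\D^\LSt(\Bun_G)\), so no localization is needed. Since \(p\) has relative dimension \(0\), \(p^*\) is \(t\)-exact, so \(\tau_{\geq0}p^!=p^*\). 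Finally \(p^*\) is conservative because \(p\) is surjective.

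The main obstacle is justifying that the limit-of-right-adjoints description of the colimit, which we know in the unlocalized setting via the connective version of \cref{lem: descent t-structure}, is compatible with passing to the \(\LSt\) full subcategories on connective parts. For this we plan to argue as in \cref{lem: !-descent of LSt and wLSt}. The inclusions \(\D^\LSt(-)_{\geq0}\subset\D(-)_{\geq0}\) admit colimit-preserving right adjoints, given by \(t\)-exact localization, and these commute with the \(!\)-pullbacks. Hence the limit of the \(\LSt\) parts is the full subcategory of \(\D(\Bun_{G'})_{\geq0}\) cut out by the condition that \(p^!\) lands in \(\LSt\). By \cite[Theorem IX.6.1.]{geometrization}, this subcategory is exactly \(\D^\LSt(\Bun_{G'})_{\geq0}\).
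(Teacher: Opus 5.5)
Your argument is the paper's: reduce to the geometric realization of the \(\D^\LSt(\Bun_{D^n}\times\Bun_G)_{\geq0}\) along \(!\)-pushforwards, use their right \(t\)-exactness as in \cref{lem: descent t-structure}, and conclude from three facts. These are that \(\tau_{\geq0}p^!\cong p^*\) is conservative, that \(p^!\) and \(p_!\) preserve the Langlands-Shahidi condition, and that \(p^*\) detects it (\cref{lem: !-descent of LSt and wLSt}, \cref{lem: (weakly) Langlands-Shahidi type compatible under * and ! pullback}, \cite[Theorem IX.6.1.]{geometrization}).

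Your side justification that localization to \(\LSt\) is a \(t\)-exact ``projection onto Bernstein-type summands'' with colimit-preserving right adjoints is not accurate, since the \(\LSt\) locus is open but in general not closed in the coarse moduli space. Nothing depends on it, though. If you read \(\D^\LSt(X)_{\geq0}\) as \(\D^\LSt(X)\cap\D(X)_{\geq0}\), a limit of full subcategories stable under the transition functors is automatically the full subcategory of the limit cut out levelwise, and that is all your last paragraph uses.
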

\begin{proof}
    This follows using the same argument as \cref{lem: descent t-structure}, using the fact that \(p_!\) and \(p^!\) preserve the (weakly) Langlands-Shahidi type condition by \cref{lem: !-descent of LSt and wLSt} and \cref{lem: (weakly) Langlands-Shahidi type compatible under * and ! pullback}.
\end{proof}
The importance of the Langlands-Shahidi type condition lies within the following conjecture.
\begin{conjecture}[{\cite[Conjecture 3.2.17.]{hamann-thesis}}]\label{conj: splitting semi-orthogonal Langlands-Shahidi}
    The semi-orthogonal decomposition on \(\D^\LSt(\Bun_G)\) splits, i.e.
    \begin{equation*}
        \D^\LSt(\Bun_G)\simeq\prod_{b\in\lvert\Bun_G\rvert}\D^{\LSt}(\Bun_G^b),
    \end{equation*}
    where the functor is induced by the collection \(i_{b!}\), equivalently \(i_{b*}\) equivalently \(i_{b\sharp}\).
    All three pushforwards are isomorphic in this case.
\end{conjecture}
From our previous discussion, we obtain the following.
\begin{theorem}\label{thm: langlands shahidi splitting descends}
    If \cref{conj: splitting semi-orthogonal Langlands-Shahidi} holds for \(G\), then it also holds for \(G'\).
\end{theorem}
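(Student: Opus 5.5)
The plan is to deduce the splitting for \(G'\) from the splitting for \(G\) via the descent statement \cref{lem: !-descent of LSt and wLSt}. That lemma presents \(\D^\LSt(\Bun_{G'})\) as \(\Mod_\Lambda\otimes_{(\D(\Bun_D),\star)}\D^\LSt(\Bun_G)\), and the comparison functor from \(\D^\LSt(\Bun_G)\) is \(p_!\). The splitting should then descend provided the convolution action of \(\D(\Bun_D)\) is compatible with the stratification.

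\textbf{Step 1: \(\Bun_D\) permutes the strata.} The action of \(\Bun_D\) on \(\Bun_G\) permutes the strata \(\Bun_G^b\). Over a fixed stratum \(\Bun_{G'}^{b'}\), the preimage \(p^{-1}(\Bun_{G'}^{b'})\) is a disjoint union of strata \(\Bun_G^b\), indexed by the fiber of \(B(G)\to B(G')\). By the argument in the proof of \cref{lem: pullback compact automorphic side}, this fiber is a \(\pi_0(\Bun_D)\)-torsor.

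\textbf{Step 2: the hypothesis is \(\D(\Bun_D)\)-linear.} Assume \(\D^\LSt(\Bun_G)\simeq\prod_b\D^\LSt(\Bun_G^b)\). Group the factors according to their image \(b'\in\lvert\Bun_{G'}\rvert\). Each group \(\prod_{b\mapsto b'}\D^\LSt(\Bun_G^b)\) is identified with \(\D^\LSt(p^{-1}(\Bun_{G'}^{b'}))\), the category of Langlands-Shahidi type sheaves on the \(\Bun_D\)-stable locally closed substack. The splitting equivalence is given by the \(i_{b!}\) (equivalently by the \(i_{b*}\)). Since \(\Bun_D\)-stability gives \(a_!\) base change along these immersions, the equivalence
\begin{equation*}
\D^\LSt(\Bun_G)\simeq\prod_{b'}\D^\LSt(p^{-1}(\Bun_{G'}^{b'}))
\end{equation*}
is \(\D(\Bun_D)\)-linear.

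\textbf{Step 3: descend factor by factor.} Apply \(\Mod_\Lambda\otimes_{\D(\Bun_D)}-\) to the equivalence of Step 2. On each factor, \(!\)-descent for the \(\Bun_D\)-torsor \(p^{-1}(\Bun_{G'}^{b'})\to\Bun_{G'}^{b'}\) is the same argument as \cref{cor: tensor product formula D(BunG)} and \cref{lem: !-descent of LSt and wLSt}, restricted to a locally closed substack. It identifies
\begin{equation*}
\Mod_\Lambda\otimes_{\D(\Bun_D)}\D^\LSt(p^{-1}(\Bun_{G'}^{b'}))\simeq\D^\LSt(\Bun_{G'}^{b'}).
\end{equation*}
Here the Langlands-Shahidi condition is detected by \(p^*\cong p^!\), by \cref{lem: (weakly) Langlands-Shahidi type compatible under * and ! pullback}. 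The left hand side is then \(\D^\LSt(\Bun_{G'})\) by \cref{lem: !-descent of LSt and wLSt}.

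\textbf{Main obstacle: the relative tensor product and the infinite product.} The relative tensor product is a geometric realization, and it must commute with the infinite product over \(b'\). Products in \(\Pr\) of stable categories are also coproducts (biproducts), so \(\prod_{b'}\) equals \(\bigoplus_{b'}\) in \(\Pr^{\mathrm{st}}\). Tensoring preserves colimits, so the tensor product commutes with the product. To make this rigorous I would use the cosimplicial limit description, with \(!\)-pullbacks as transition maps, together with the fact that limits commute with products.

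\textbf{Identifying the functors.} It remains to check that the resulting equivalence is induced by the \(i_{b'!}\). This follows from base change \(p_!i_{b!}\cong i_{b'!}p^b_!\), and similarly for \(*\), using properness of the restriction \(p^b\) of \(p\) to \(\Bun_G^b\) in the torsor direction modulo \([*/D(E)]\). A second route is to observe that \(i_{b'!}A\to i_{b'*}A\) becomes an isomorphism after applying \(p^*\), by the hypothesis for \(G\) and base change. Since \(p^*\) is conservative, the map is already an isomorphism, which gives the equality of the three pushforwards. Full faithfulness and essential surjectivity of the product functor then follow from the descended equivalence.
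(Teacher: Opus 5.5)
Your proposal is correct and follows essentially the same route as the paper. Both arguments present \(\D^\LSt(\Bun_{G'})\) as \(\Mod_\Lambda\otimes_{(\D(\Bun_D),\star)}\D^\LSt(\Bun_G)\) via \cref{lem: !-descent of LSt and wLSt}, regroup the strata of \(\Bun_G\) into the \(\Bun_D\)-stable pieces \(\D^\LSt(p^{-1}(\Bun_{G'}^{b'}))\), commute the relative tensor product with the (countable) product because products and coproducts agree in \(\Pr\), and then descend factor by factor using universal \(!\)-descent along the base-changed torsor. Your additional check that the resulting equivalence is induced by the \(i_{b'!}\), and that \(i_{b'!}\cong i_{b'*}\) via the conservativity of \(p^*\cong p^!\), fills in a point the paper leaves implicit.
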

\begin{proof}
    Let \(p\from\Bun_G\to\Bun_{G'}\) be the canonical map.
    Using \cref{lem: !-descent of LSt and wLSt}, we get
    \begin{align*}
        \D^\LSt(\Bun_{G'})&\simeq\Mod_{\Lambda}\otimes_{(\D(\Bun_D),\star)}\D^\LSt(\Bun_{G})\\
        &\simeq\Mod_{\Lambda}\otimes_{(\D(\Bun_D),\star)}\prod_{b\in\lvert\Bun_{G'}\rvert}\D^\LSt(p^{-1}(\Bun_{G'}^b))\\
        &\simeq\prod_{b\in\lvert\Bun_{G'}\rvert}\Mod_{\Lambda}\otimes_{(\D(\Bun_D),\star)}\D^\LSt(p^{-1}(\Bun_{G'}^b)).
    \end{align*}
    The last step used that countable products and countable coproducts agree in \(\Pr\), and that convolution with \(\D(\Bun_D)\) preserves \(\D^\LSt(p^{-1}(\Bun_{G'}^b))\).
    Since \(p^{-1}(\Bun_{G'}^b)\to\Bun_{G'}^b\) admits universal \(!\)-descent, being the base change of \(p\) which was proven to be universal \(!\)-descent in \cref{lem: universal !-descent BunG BunG'}.
    Then using the same proof as \cref{cor: tensor product formula D(BunG)}, we compute that 
    \begin{equation*}
        \Mod_{\Lambda}\otimes_{(\D(\Bun_D),\star)}\D(p^{-1}(\Bun_{G'}^b))\simeq\D(\Bun_{G'}).
    \end{equation*}
    Using the same argument as in \cref{lem: (weakly) Langlands-Shahidi type compatible under * and ! pullback}, we deduce that we also have
    \begin{equation*}
        \Mod_{\Lambda}\otimes_{(\D(\Bun_D),\star)}\D^\LSt(p^{-1}(\Bun_{G'}^b))\simeq\D^\LSt(\Bun_{G'}).
    \end{equation*}
\end{proof}
Another advantage is that Langlands-Shahidi type implies that the stabilizers in \(\locsys{W_E}{\check{G}}\) are diagonalizable in classical types.\footnote{This is not true for exceptional types. For example for \(G_2\) there exists a Langlands-Shahidi type parameter with stabilizer \(S_3\), see \cite[Lemma 2.4]{local-langlands-G2}.}
From this we gain the following observation that we will not need in the following discussion, but we find helpful for understanding \(\Ind\Perf^\qc\).
\begin{lemma}\label{lem: IndPerf is QCoh for classical types t-structures match}
    Assume that \(G\) has no almost simple factors of exceptional type or that \(\ell\in\Lambda^\times\)(so that \(\Lambda\) is a \(\bbQ_\ell\)-algebra).
    Then \(\Ind\Perf^\qc(\locsyssub{W_E}{\check{G}}{\LSt})\simeq\QCoh(\locsyssub{W_E}{\check{G}}{\LSt})\).
    Under this equivalence the good filtration \(t\)-structure matches up with the usual \(t\)-structure.
\end{lemma}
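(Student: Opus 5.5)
The plan is to show that both categories are compactly generated, with the same compact generators and the same mapping spectra between them. The key input is that the stabilisers of points of \(\locsyssub{W_E}{\check{G}}{\LSt}\) are linearly reductive over \(\Lambda\). For a Langlands-Shahidi type parameter with cuspidal support \((\check{M},\varphi_{\check{M}})\), the remark after the definition identifies the locus as the étale image of \(\locsyssub{W_E}{\check{M}}{\irred}\). The stabiliser of \(\varphi\) in \(\check{G}\) is then an extension of a finite group, coming from the normaliser of \(\check{M}\), by the centraliser of the irreducible parameter \(\varphi_{\check{M}}\) in \(\check{M}\). That centraliser is \(Z(\check{M})^\Gamma\) up to finite index, hence diagonalizable up to a finite group.

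In classical types I would check directly that Langlands-Shahidi type forces the finite part to be trivial, since the constituents are pairwise non-isomorphic, so the stabiliser is diagonalizable. For \(\GL_n\), for example, it is a product of copies of \(\bbG_m\). When \(\ell\in\Lambda^\times\), any extension of a finite group by a diagonalizable group is linearly reductive over a \(\bbQ\)-algebra. In both cases I would reduce to a statement about the stack étale locally over the coarse moduli space. Using \(f\) étale, \(\locsyssub{W_E}{\check{G}}{\LSt}\) is locally of the form \(\Spec(A)/H\) with \(H\) linearly reductive, since the stack \(\locsyssub{W_E}{\check{M}}{\irred}\) is a gerbe over its coarse space banded by \(Z(\check{M})^\Gamma\)-type groups.

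For \(X=\Spec(A)/H\) with \(H\) linearly reductive, \(\QCoh(X)\) is compactly generated by the vector bundles \(A\otimes V\) for \(V\in\Rep(H)\). These are perfect, and \(R\Gamma(X,-)=(-)^H\) is exact and commutes with colimits, so perfect complexes are compact. It follows that \(\Ind\Perf(X)\to\QCoh(X)\) is an equivalence. To pass from the local statement to \(\locsyssub{W_E}{\check{G}}{\LSt}\), I would use that both \(\Ind\Perf^\qc\) and \(\QCoh\) satisfy étale descent along the coarse moduli. For \(\Ind\Perf\) this works by the Lurie tensor product description over \(\QCoh\) of the affine coarse space, which is available on quasi-compact opens. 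Taking the colimit over the wild-inertia level \(P\) and over the quasi-compact supports then gives the equivalence globally.

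For the \(t\)-structures, the good filtration \(t\)-structure has connective part generated by the vector bundles attached to \(\check{G}\)-representations, or equivalently those with good filtration. Under linear reductivity these vector bundles generate \(\QCoh_{\geq0}\). Conversely, every connective object of \(\QCoh\) is a colimit of them, because locally every discrete \(H\)-equivariant module is a quotient of modules of the form \(A\otimes V\). Hence the two connective parts agree.

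The main obstacle is the stabiliser computation in the integral, non-exceptional case. There I must verify that no finite non-diagonalizable component group survives the Langlands-Shahidi condition, including for non-split forms, since the Galois action could permute the blocks. I would first treat \(\GL_n\) and then deduce the remaining classical types via their standard representation, using \cref{lem: langlands-shahidi type invariant under central isogenies} to reduce to simply connected or adjoint forms as convenient.
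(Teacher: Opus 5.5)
Your strategy is the paper's in substance for the non-exceptional case: control the stabilizers at closed points, then use the local structure of \(\locsyssub{W_E}{\check G}{\LSt}\) over its adequate moduli space \(\locsyscoarsesub{W_E}{\check G}{\LSt}\). The paper quotes Alper--Hall--Rydh (Corollary 6.12 and Proposition 6.15 of the étale local structure paper) for compact generation by perfect complexes. You instead redo this by hand, via charts \(\Spec(A)/H\) and étale descent of \(\Ind\Perf\) and \(\QCoh\) over the coarse space. If you do that, take the charts from their local structure theorem rather than from \(f\). The map \(f\) is not stabilizer-preserving once \(N_{\check G}(\check M)/\check M\) fixes \(\varphi_{\check M}\), and \(\locsyssub{W_E}{\check M}{\irred}\) is not a gerbe in general. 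In characteristic \(0\) the paper needs no stabilizer analysis at all: \(\check G\) is itself linearly reductive, so the stack is perfect. For the \(t\)-structures your argument goes further than the paper's, which only records right \(t\)-exactness of \(\Ind\Perf^\qc\to\QCoh\). A right \(t\)-exact equivalence need not be \(t\)-exact (a shift is one), so the reverse inclusion you address is genuinely needed. As written, though, it is only local and uses \(H\)-representations. You still have to globalize it to the bundles \(\oo\otimes V\) with \(V\in\Rep(\check G)\).

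The step that fails is the claim that in classical types the Langlands--Shahidi condition kills the finite part of the stabilizer. Take \(G=\mathrm{Sp}_4\), \(\check G=\mathrm{SO}_5\), and the principal-series parameter \(\chi_1\oplus\chi_2\oplus\mathbf 1\oplus\chi_2^{-1}\oplus\chi_1^{-1}\), with \(\chi_1\) a nontrivial quadratic character and \(\chi_2\) a generic unramified character. The root characters \(\chi_1,\chi_2,\chi_1\chi_2,\chi_1\chi_2^{-1}\) and their inverses avoid \(\mathbf 1\) and \(\lvert\cdot\rvert^{\pm1}\), so the parameter is of Langlands--Shahidi type. However, its centralizer is \(\mathrm S(\mathrm O_2\times\GL_1\times\mathrm O_1)\cong\mathrm O_2\times\bbG_m\), which is not even diagonalizable. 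Similarly, a discrete parameter \(\varphi_1\oplus\varphi_2\) into \(\mathrm{Sp}_{2n}\), with distinct irreducible symplectic \(\varphi_i\), has centralizer \(\mu_2\times\mu_2\), which is strictly larger than \(Z(\check G)^\Gamma=\mu_2\). So the check you single out as the main obstacle cannot succeed as stated; the paper's phrase ``diagonalizable'' has the same defect. Nothing in your argument actually uses diagonalizability. Your local step and the Alper--Hall--Rydh input only need the stabilizers at closed points to be linearly reductive (nice, in mixed characteristic), i.e. their component groups must have order invertible in \(\Lambda\). In the example that order is \(2\), and \(\ell\neq 2\) is automatic there, from \(\ell\nmid\lvert\pi_0(Z(G))\rvert\), or from \(\ell\neq p\) when \(p=2\). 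That, rather than triviality of the finite part, is the statement to verify type by type.
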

\begin{proof}
    In characteristic 0, the stack \(\locsyssub{W_E}{\check{G}}{\LSt}\) is a perfect stack, being a quotient stack of an infinite union of affine schemes by a reductive group.
    So we now assume that \(G\) has no almost simple factors of exceptional type..
    The map \(\locsyssub{W_E}{\check{G}}{\LSt}\to\locsyscoarsesub{W_E}{\check{G}}{\LSt}\) is an adequate moduli space and all stabilizers are diagonalizable, so the equivalence follows from \cite[Corollary 6.12.]{etale-local-structure-stacks} and \cite[Proposition 6.15.]{etale-local-structure-stacks}.
    For the claim about \(t\)-structures, note that vector bundles induced from Weyl-modules remain connective in the natural \(t\)-structure on \(\QCoh(\locsyssub{W_E}{\check{G}}{\LSt})\), it follows that the natural functor
    \begin{equation*}
        \Ind\Perf^\qc(\locsyssub{W_E}{\check{G}}{\LSt})\to\QCoh(\locsyssub{W_E}{\check{G}}{\LSt})
    \end{equation*}
    is right \(t\)-exact, but a right \(t\)-exact equivalence of categories is \(t\)-exact.
\end{proof}
Another advantage is that the \(\nilp\)-condition simplifies in this case. In fact, this already happens for weakly Langlands-Shahidi type parameters.
We get
\begin{lemma}\label{lem: nilp vanishes weakly Langlands-Shahidi type}
    Let \(\varphi\) be a parameter of weakly Langlands-Shahidi type valued in an algebraically closed field \(\Lambda\)-field \(k\).
    Then \(x^*_\varphi\Sing_{\locsys{W_E}{\check{G}}/\Lambda}\cap\mathcal{N}^*_{\check{G}}\otimes_{\Lambda}k=0\).
    Here \(\mathcal{N}_{\check{G}}^*\) denotes the nilpotent cone inside \(\check{\mathfrak{g}}^*\).
\end{lemma}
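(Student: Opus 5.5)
The plan is to compute the singular support fibre at \(\varphi\) through the obstruction space of the deformation theory of \(\locsys{W_E}{\check{G}}\) at \(\varphi\). The stack is quasi-smooth, with tangent complex at \(x_\varphi\) given by \(R\Gamma(W_E,\check{\mathfrak{g}}\otimes k)[1]\), where \(W_E\) acts through \(\mathrm{Ad}\circ\varphi\). Its \((-1)\)-shifted cotangent fibre, the space of singularities \(x_\varphi^*\Sing\), is therefore \(H^{-1}\) of the cotangent complex. By Tate duality for \(W_E\) (Euler characteristic zero, and \(H^2(W_E,V)\cong H^0(W_E,V^\vee(1))^\vee\)), this identifies with
\begin{equation*}
H^2(W_E,\check{\mathfrak{g}}\otimes k)^\vee\cong H^0(W_E,\check{\mathfrak{g}}^*(1)\otimes k),
\end{equation*}
that is, with the elements \(\xi\in\check{\mathfrak{g}}^*\otimes k\) satisfying \(\varphi(w)\cdot\xi=\lvert w\rvert^{\pm1}\xi\). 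This is the standard description used in \cite{geometrization} and \cite{hansen-mann}, and I will take it as the starting point. The statement then reduces to showing that every such \(\xi\) which is moreover nilpotent is zero.

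First I reduce to the semisimple case. The condition of being weakly Langlands-Shahidi type is defined through the semisimplification and its cuspidal support \((\check{M},\varphi_{\check{M}})\). Since \(\varphi\) is of weakly Langlands-Shahidi type, I would argue using the remark above (the étale/smooth-locus description) that \(\varphi\) is already semisimple; alternatively one can degenerate via a cocharacter, which is the step to keep in mind if semisimplicity fails. So assume \(\varphi\) factors through \(\check{M}\) with \(\varphi_{\check{M}}\) irreducible (elliptic) in \(\check{M}\). Fix a parabolic \(\check{P}=\check{M}\check{N}\), which gives the decomposition
\begin{equation*}
\check{\mathfrak{g}}=\mathfrak{n}^-\oplus\mathfrak{m}\oplus\mathfrak{n},
\end{equation*}
stable under \(W_E\), and dualize it. The \(\mathfrak{n}\) and \(\mathfrak{n}^-\) pieces contribute \(H^2(W_E,r^N_\ad\varphi_{\check{M}})\) and \(H^2(W_E,(r^N_\ad\varphi_{\check{M}})^\vee)\) respectively, using that \(\mathfrak{n}^-\cong\mathfrak{n}^\vee\) via the Killing form (or a nondegenerate invariant form, after an isogeny if needed). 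Both of these vanish by the definition of weakly Langlands-Shahidi type.

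Hence any \(\xi\) lies in \(H^0(W_E,\mathfrak{m}^*(1))\). The remaining step, which I expect to be the main obstacle, is to show that a nilpotent \(\xi\in\mathfrak{m}^*\) twisted-invariant under an elliptic \(\varphi_{\check{M}}\) vanishes. Identify \(\mathfrak{m}^*\cong\mathfrak{m}\); a nilpotent \(\xi\) then lies in \([\mathfrak{m},\mathfrak{m}]\). If \(\xi\neq0\), the centralizer-type argument (Jacobson–Morozov in good characteristic, or the canonical parabolic attached to a nilpotent element, as in Kempf–Rousseau) produces a proper parabolic of \(\check{M}\) normalized by the image of \(\varphi_{\check{M}}\). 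Twisted invariance suffices for this, because scaling \(\xi\) preserves the associated parabolic. This contradicts the irreducibility of \(\varphi_{\check{M}}\). In small characteristic I would use the Kempf–Rousseau optimal cocharacter instead of Jacobson–Morozov, since it is canonical and therefore equivariant for the image of \(\varphi\).
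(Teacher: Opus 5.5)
Your argument is correct and is essentially the paper's proof. Your fibrewise splitting of \(H^0(W_E,\check{\mathfrak g}^*(1))\) along \(\mathfrak n^-\oplus\mathfrak m\oplus\mathfrak n\), with the \(\mathfrak n^{\pm}\)-parts killed by the weakly Langlands--Shahidi condition, is the pointwise content of the cartesian square of singular supports that the paper takes from \cite[Lemma 2.4.4.]{singsupp}, and your Kempf--Rousseau argument supplies the irreducible case that the paper cites from \cite[Lemma 2.8.]{zou2025categoricallocallanglandsmathrmgln}. One caveat: weakly Langlands--Shahidi type does not force semisimplicity (e.g.\ for \(\GL_2\), \(\chi\oplus\chi\) is of this type when \(q\neq 1\) in \(k\) but has non-split unramified self-extensions), so drop your first reduction and either take \(\varphi\) semisimple, as the definition and the paper's proof implicitly do, or use your degeneration route together with closedness of the locus where the nilpotent singular fibre is nonzero.
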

\begin{proof}
    We know that the following square is a fiber product diagram by \cite[Lemma 2.4.4.]{singsupp}, indeed the definition of weakly Langlands-Shahidi type is rigged to make this true.
    \begin{equation*}
        \begin{tikzcd}
\mathrm{Sing}_{\locsys{W_E}{\check{M}}{\wLSt}} \arrow[d] \arrow[r]   & \locsyssub{W_E}{\check{M}}{\wLSt} \arrow[d, "f"] \\
\mathrm{Sing}_{\locsyssub{W_E}{\check{G}}{\wLSt}} \arrow[r] & \locsyssub{W_E}{\check{G}}{\wLSt}          
\end{tikzcd}
    \end{equation*}
    It follows that the composite
    \begin{equation*}
        \mathrm{Sing}_{\locsys{W_E}{\check{M}}{\wLSt}} \to\locsyssub{W_E}{\check{M}}{\wLSt}\times_{B\check{G}}\Lie{\check{G}}/\check{G}
    \end{equation*}
     pulled back from \(\mathrm{Sing}_{\locsys{W_E}{\check{M}}}{\wLSt}\to\locsyssub{W_E}{\check{G}}{\wLSt}\) factors through
     \begin{equation*}
        \locsyssub{W_E}{\check{M}}{\wLSt}\times_{B\check{M}}{\mathfrak{m}^*}/\check{M}\to\locsyssub{W_E}{\check{M}}{\wLSt}\times_{B\check{G}}\Lie{\check{G}}/\check{G}.
     \end{equation*}
    Under \(f\)  vectors \(v\in x^*_\varphi\Sing_{\locsys{W_E}{\check{G}}/\Lambda}\cap\mathcal{N}^*_{\check{G}}\otimes_{\Lambda}k\) pull back to vectors \(f^*v\in x^*_{\varphi_{\check{M}}}\Sing_{\locsys{W_E}{\check{M}}/\Lambda}\cap\mathcal{N}^*_{\check{M}}\otimes_{\Lambda}k\), where \(\varphi_{\check{M}}\) is a lift of \(\varphi\) to \(\check{M}\) that is irreducible.
    The latter space vanishes by \cite[Lemma 2.8.]{zou2025categoricallocallanglandsmathrmgln}.
    As \(f^*\) induces an isomorphism on the singular cone, it is conservative and it follows that \(v=0\) too, as desired.
\end{proof}
In particular, the \(\ell\)-adic categorical Langlands conjecture simplifies to the following form.
\begin{conjecture}\label{conj: langlands shahidi type conjecture}
    Fix a Whittaker datum \((U,\psi)\), giving rise to a sheaf \(\mathcal{W}\defined i_{1!}\cind_{U(E)}^{G(E)}\psi\).
    Then there is an equivalence of categories
    \begin{equation*}
        \Ind\Perf^\qc(\locsyssub{W_E}{\check{G}}{\LSt})\simeq\D^\LSt(\Bun_G)
    \end{equation*}
    with the following properties:
    \begin{enumerate}
        \item It maps the structure sheaf \(\oo\) to \(\mathcal{W}^\wLSt\).
        \item It is linear for the spectral action of \(\Perf(\locsyssub{W_E}{\check{G}}{\wLSt})\) on both sides (In particular, this pins down the functor together with the previous point).
        \item It is \(t\)-exact for the good filtration \(t\)-structure on the left hand side and the perverse \(t\)-structure on the right hand side.
    \end{enumerate}
    The same holds true if we replace \(\wLSt\) everywhere with \(\LSt\).
\end{conjecture}
\begin{remark}\label{rem: discussion tilting}
    Point (3) is equivalent to the claim that Hecke operators attached to Weyl modules or more generally \(\check{G}\)-representations with good filtration are right \(t\)-exact.
    For tilting modules, this appears in \cite[Conjecture 6.4.]{torsionvanishing}, at least for the subcategory localized at a single parameter.
    If \(V\) is a tilting module, then both \(V\) and \(V^\vee\) admit a good filtration, so Point (3) implies that the Hecke action for tilting modules is \(t\)-exact, as conjectured there.
    Let us also remark that there is a conjecture that local Shimura varieties at finite level are Stein, see \cite[Conjecture 1.10.]{t-exact}, this would imply \(t\)-exactness for Hecke operators attached to minuscule weights, and by tensoring minuscule representation one would get \(t\)-exactness for all Weyl modules whose highest weight is a direct sum of minuscule representations.
    This gives all Weyl modules except for groups that contain simple factors of type \(G_2\), \(F_4\) or \(E_8\).

    By \cref{lem: IndPerf is QCoh for classical types t-structures match}, often we can just write \(\QCoh\) and take the usual \(t\)-structure.
    Point (3) cannot be expected to hold without imposing the Langlands-Shahidi type assumption.
\end{remark}
We get the following compatibility result.
\begin{theorem}\label{thm: Langlands-Shahidi type compatibility}
    Assume that \(H^1(\Gamma,D)=0\).
    Assume that \(\ell\nmid|\pi_0(Z(G))|\).
    Let \((U,\psi)\) be a Whittaker datum for \(G'\), this induces a Whittaker datum for \(G\) by observing that the unipotent radical of the Borel does not change under passage from \(G\) to \(G'\).
    We will give this Whittaker datum for \(G\) the same name.
    If \cref{conj: langlands shahidi type conjecture} holds for \(G\) for a Whittaker datum \((U,\psi)\), then it also holds for \(G'\) for \((U,\psi)\).
\end{theorem}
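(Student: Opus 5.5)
The plan is to obtain the equivalence for \(G'\) by base changing the equivalence for \(G\) along \(\Mod_\Lambda\otimes_{\Ind\Perf^\qc(\locsys{W_E}{\check{D}})}-\). This works on both sides. On the automorphic side, \cref{thm: geometric convolution action is spectral action} identifies the spectral \(\QCoh(\locsys{W_E}{\check{D}})\)-action with the convolution action of \((\D(\Bun_D),\star)\), and the augmentation \(\QCoh(\locsys{W_E}{\check{D}})\to\Mod_\Lambda\) (pullback along the trivial parameter) corresponds to \(\Gamma_c\) under \cref{lem: cartier dual BunT QCoh(ParT)}. Hence \cref{lem: !-descent of LSt and wLSt} gives \(\Mod_\Lambda\otimes_{\QCoh(\locsys{W_E}{\check{D}})}\D^\LSt(\Bun_G)\simeq\D^\LSt(\Bun_{G'})\), with the canonical functor being \(p_!\). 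On the spectral side, \cref{cor: kunneth for parameter stack} gives the analogous statement for \(\Ind\Perf^\qc\). To pass to the \(\LSt\) and \(\wLSt\) loci, we use \cref{lem: langlands-shahidi type invariant under central isogenies}: the (weakly) Langlands-Shahidi locus of \(\check{G}'\) is the preimage of that of \(\check{G}\) under \(z\). Since the localization is a base change along an open immersion of coarse moduli, it commutes with the relative tensor product.

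Next, we need the equivalence \(\Phi_G\from\Ind\Perf^\qc(\locsyssub{W_E}{\check{G}}{\LSt})\simeq\D^\LSt(\Bun_G)\) to be \(\QCoh(\locsys{W_E}{\check{D}})\)-linear. Property (2) gives linearity for the spectral action of \(\Perf\) of the parameter stack, and the \(\check{D}\)-action factors through it, so \(\Phi_G\) is linear for the spectral \(\QCoh(\locsys{W_E}{\check{D}})\)-action. Base changing yields an equivalence \(\Phi_{G'}\). We then verify the three properties for \(\Phi_{G'}\).

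\textbf{Properties (1) and (2).} By construction \(\Phi_{G'}\circ z^*\cong p_!\circ\Phi_G\), so \(\Phi_{G'}(\oo)=p_!\mathcal{W}^\LSt\). The equality \(p_!\mathcal{W}\cong\mathcal{W}_{-1}\) follows from \(\mathcal{W}_\bullet=q_{\bullet!}\psi\), as in the proof of \cref{thm: compatiblity full categorical conjecture}. Commuting \(p_!\) with localization follows from \cref{lem: !-descent of LSt and wLSt}. For linearity, \cref{prop: commutativity different spectral actions} shows that \(p_!\) intertwines the spectral actions via \(z^*\). Since \(z^*\) is essentially surjective up to colimits (generation in the proof of \cref{lem: kunneth indperf}), linearity descends to \(\Phi_{G'}\).

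\textbf{Property (3), the main obstacle.} Here we use \cref{lem: descent t-structure} and its \(\LSt\)-variant: \(\D^\LSt(\Bun_{G'})_{\geq0}\) is the connective tensor product, i.e. it is generated under colimits by \(p_!\) of connective objects. The analogous statement is needed on the spectral side for the good filtration \(t\)-structure, namely that \(z^*\) is right \(t\)-exact and its image generates the connective part. Generation holds because every \(\check{G}'\)-representation with good filtration is a summand of the restriction of one for \(\check{G}\), as used in \cref{lem: kunneth indperf}. Right \(t\)-exactness of \(\Phi_{G'}\) then follows from that of \(\Phi_G\), \(p_!\) and \(z^*\).

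Left \(t\)-exactness is the delicate part. I would check it via right adjoints: \(p^!\cong p^*\) is \(t\)-exact and conservative (\cref{lem: universal !-descent BunG BunG'}), and on the spectral side \(z_*\) is \(t\)-exact, being affine after the gerbe splitting of \cref{lem: splitting gerbe}. The adjoints also intertwine \(\Phi_G^{-1}\) and \(\Phi_{G'}^{-1}\). Conservativity of \(p^*\) then detects coconnectivity, which gives left \(t\)-exactness.
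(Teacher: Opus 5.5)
Your handling of (1) and (2) is the paper's argument. The augmented simplicial diagram in the proof of \cref{thm: compatiblity full categorical conjecture} is exactly the bar construction that computes your base change $\Mod_\Lambda\otimes_{\Ind\Perf^\qc(\locsys{W_E}{\check{D}})}-$. Since $\Phi_G$ is already an equivalence, no control of compact objects is needed.

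For (3) your route is genuinely different. The paper never looks at $\Phi_{G'}$ directly. It combines three facts:
\begin{itemize}
\item $p_!$ generates $\D^\wLSt(\Bun_{G'})_{\geq 0}$;
\item $p_!$ is compatible with Hecke operators (\cref{prop: commutativity different spectral actions}), applied to $\ind_{\check{G}'}^{\check{G}}V'$;
\item $V'$ is a summand of $(\ind_{\check{G}'}^{\check{G}}V')|_{\check{G}'}$.
\end{itemize}
From these it shows that Hecke operators attached to Weyl modules of $\check{G}'$ are right $t$-exact. It then invokes \cref{rem: discussion tilting}, which asserts without proof that this is equivalent to (3).

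You instead show that the connective parts for $G'$ on both sides are generated by $p_!$ resp.\ $z^*$ of the connective parts for $G$. This gives right $t$-exactness of $\Phi_{G'}$ at once, and you treat left $t$-exactness separately. The ingredients are the same, but your version is more self-contained: it does not lean on the remark and it addresses both halves of (3). The paper's version instead yields the intrinsic statement about Hecke operators on $\Bun_{G'}$, which is the form used in the tilting and Stein discussion.

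One justification needs repair. The map $z$ is indeed affine, but affineness gives $t$-exactness of $z_*$ for the standard $t$-structure on $\QCoh$. It says nothing about the good filtration $t$-structure on $\Ind\Perf^\qc$. The paper only identifies these two $t$-structures in the situation of \cref{lem: IndPerf is QCoh for classical types t-structures match}, which the theorem does not assume.

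You only need left $t$-exactness of $z_*$, and that is formal: $z_*$ is right adjoint to $z^*$, and $z^*$ is right $t$-exact. You should state why $z^*$ is right $t$-exact: restriction from $\check{G}$ to $\check{G}'$ preserves good filtrations, because the two groups have the same flag variety.

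Cleaner still, left $t$-exactness of $\Phi_{G'}$ is the same as right $t$-exactness of $\Phi_{G'}^{-1}$. Use $\Phi_{G'}^{-1}\circ p_!\cong z^*\circ\Phi_G^{-1}$ together with the generation of $\D^\LSt(\Bun_{G'})_{\geq0}$ by $p_!(\D^\LSt(\Bun_G)_{\geq0})$. This proves it by exactly the generation argument you already used for $\Phi_{G'}$ itself.
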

\begin{proof}
    For the first two points, the proof is essentially the same as in \cref{thm: compatiblity full categorical conjecture}, with the simplification that \(a_\psi\) is already an equivalence and thus we can omit the discussion about the behavior of compact objects under \(p^!\).
    This is why there are no assumptions on \(\ell\) appearing in the statement.

    Regarding the \(t\)-exactness, we proceed as follows.
    We know from \cref{lem: !-descent of LSt and wLSt}, that
    \begin{equation*}
        \colim_{n\in\Delta}\D^\wLSt(\Bun_D^n\times\Bun_G)\to\D^\wLSt(\Bun_{G'})
    \end{equation*}
    is an equivalence of categories.
    All transition maps are \(!\)-pushforwards, and one checks they are right \(t\)-exact for the perverse \(t\)-structure.
    It follows that we have
    \begin{equation*}
        \colim_{n\in\Delta}\D^\wLSt(\Bun_D^n\times\Bun_G)_{\geq 0}\to\D^\wLSt(\Bun_{G'})_{\geq 0}
    \end{equation*}
    is an equivalence of categories, in particular \(p_!\) generates the target.
    Let \(V'\) be a Weyl module for \(\check{G'}\).
    It suffices to see that its action is right \(t\)-exact.
    Then \(\ind_{\check{G'}}^{\check{G}}V'\) admits a good filtration, so we know its action on \(\D^\wLSt(\Bun_{G'})\) is right \(t\)-exact.
    We find that \(T_{\ind_{\check{G'}}^{\check{G}}V'}p_!A\cong p_!T_{\ind_{\check{G'}}^{\check{G}}V'|_{\check{G}'}}A\) for all \(A\) as in the proof of Point (2) in \cref{thm: Langlands-Shahidi type compatibility}.
    If \(A\) is connective in the perverse \(t\)-structure, then we deduce that the right hand side and therefore the left hand side is connective.
    Since \(V'\) occurs as a direct summand of \(\ind_{\check{G'}}^{\check{G}}V'|_{\check{G}'}\) we deduce that \(T_{V'}p_!A\) is connective in the perverse \(t\)-structure if \(A\) is.
    Since \(p_!\) generates the connective part of the perverse \(t\)-structure it follows that \(T_{V'}\) is right \(t\)-exact.
\end{proof}

\begin{remark}
    This all falls into a bigger paradigm.
    Let \(\D^\temp(\Bun_G)\) denote the essential image of \(-*\mathcal{W}_{\psi}\).\footnote{Here \(\D^\temp\) stands for ``tempered'' in analogy with the usual geometric Langlands correspondence. However this notion of temperedness has little to do with the notion of temperedness in the classical Langlands correspondence. For example the constant is tempered in the classical sense on each stratum, but is orthogonal to \(\D^\temp(\Bun_G)\) categorically. Hence it is as un-tempered as possible in that meaning.}
    Then there ought to be a localization functor \(\D(\Bun_G)\to\D^\temp(\Bun_G)\).
    We define a \(t\)-structure on \(\D^\temp(\Bun_G)\) to be the unique one that makes the localization functor \(t\)-exact.
    Then we expect \(\Ind\Perf^\qc(\locsys{W_E}{\check{G}})\simeq\D^\temp(\Bun_G)\) to be \(t\)-exact.
    In the usual geometric Langlands correspondence, this is \cite[Theorem F]{non-vanishig-whittaker}.
    Using the same arguments as before, a \(t\)-exact equivalence \(\Ind\Perf^\qc(\locsys{W_E}{\check{G}})\simeq\D^\temp(\Bun_G)\) implies the same \(t\)-exact equivalence for \(G'\).
    This does not quite imply \cref{thm: Langlands-Shahidi type compatibility}, as we don't know  \(\D^{\wLSt}(\Bun_G)\subset\D^\temp(\Bun_G)\).
    Then \cref{lem: nilp vanishes weakly Langlands-Shahidi type} is the spectral analogue of the claim \(\D^{\wLSt}(\Bun_G)\subset\D^\temp(\Bun_G)\).

    Let us also remark that there is a way to define \(\D^\temp(\Bun_G)\) which makes independence of the choice of Whittaker datum clear, at least in characteristic 0.
    For the rest of the remark, let us work with \(\Lambda=\overline{\bbQ_{\ell}}\).
    Namely, consider the derived Satake equivalence \(\Ind\D^{\ula}(\localhck_{G,C})=\Ind\Coh(*\times_{\check{\mathfrak{g}}}*/\check{G})\) of \cite[Theorem 1.2.]{bando2026derivedgeometricsatakeequivalence}
    Here we implictly choose a divisor \(C\to\Div^1\) on the Fargues-Fontaine curve.
    As the action of \(\D^{\ula}(\localhck_{G,C})\) preserves compact objects of \(\D(\Bun_{G,C})\simeq\D(\Bun_G)\)\footnote{Acting with a ULA sheaf \(V\) admits a left and right adjoint given by asking with \(\mathrm{sw}^*\bbD V\), where \(\bbD\) is the Verdier dual and \(\mathrm{sw}\from\localhck_G\to\localhck_G\) exchanges the two bundles.}, we have an action of \(\Ind\D^\ula(\localhck_{G,C})\) on \(\D(\Bun_G)\).
    Then \(\D^\temp(\Bun_G)\) is the subcategory of \(\D(\Bun_G)\) such that the action of \(\Ind\D^\ula(\localhck_{G,C})\simeq\Ind\Coh(*\times_{\check{\mathfrak{g}}}*/\check{G})\) factors through \(\QCoh(*\times_{\check{\mathfrak{g}}}*/\check{G})\).
    This might a priori depend on a choice of map \(C\to\Div^1\), but using the equivalence \(\Ind\D^{\ula}(\localhck_{G,\Div^1})\simeq\Ind\Coh(*\times_{\check{\mathfrak{g}}(-1)}*/\check{G})\) as categories with \(W_E\)-action from \cite[Theorem 1.1.]{bando2026derivedgeometricsatakeequivalence}, one sees that the choice is immaterial.
\end{remark}

%% file: application-pgln.tex
\section{Application to \texorpdfstring{\(\PGL_n\)}{PGLn}}
We will be able to resolve all \cite{beijing-notes} in the case of \(\PGL_n\) about generous parameters by reduction to \(\GL_n\), where this is done in \cite[Section 8]{zou2025categoricallocallanglandsmathrmgln}.
\begin{theorem}
   We have a direct product decomposition
   \begin{equation*}
    \D^\LSt(\Bun_{\PGL_n})\simeq\prod_{b\in B(\PGL_n)}\D^\LSt(\Bun_{\PGL_n}^b).
   \end{equation*}
   The decomposition is via excision and the \(!\) and \(*\) functors agree.
   For \(\Lambda\) a flat \(\bbZ_\ell\)-algebra or a \(\bbZ_\ell\)-field Hecke operators are \(t\)-exact. 
   The analogue for categories localized at a single parameter of Langlands-Shahidi type holds too.
\end{theorem}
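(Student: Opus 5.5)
The plan is to specialize the general descent results of the previous sections to \(G=\GL_n\) and \(D=\bbG_m\) the center, so that \(G'=\PGL_n\). First I check the hypotheses. Since \(D=\bbG_m\) is split, \(H^1(\Gamma,D)=0\) by Hilbert 90. Since \(\check{G}=\GL_n\) has connected center, the condition \(\ell\nmid\lvert\pi_0(Z(G))\rvert\) holds for \(\GL_n\). For \(\PGL_n\) one needs the spectral action to exist, which the paper's standing hypothesis on \(\ell\) guarantees. I then import the results of \cite[Section 8]{zou2025categoricallocallanglandsmathrmgln} for \(\GL_n\):
\begin{enumerate}
\item the splitting \(\D^\LSt(\Bun_{\GL_n})\simeq\prod_b\D^\LSt(\Bun_{\GL_n}^b)\), with \(i_{b!}\cong i_{b*}\);
\item \(t\)-exactness of Hecke operators on \(\D^\LSt(\Bun_{\GL_n})\), for \(\Lambda\) flat over \(\bbZ_\ell\) or a \(\bbZ_\ell\)-field;
\item the analogous statements localized at a single Langlands-Shahidi type parameter.
\end{enumerate}

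The product decomposition then follows immediately from \cref{thm: langlands shahidi splitting descends}. The agreement of \(!\)- and \(*\)-extensions is part of \cref{conj: splitting semi-orthogonal Langlands-Shahidi}, so it is inherited as well. Concretely, \(p^{-1}(\Bun_{\PGL_n}^{b'})\) is a disjoint union of strata indexed by \(\pi_0(\Bun_{\bbG_m})=\bbZ\), and \(p_!\) together with base change commute with \(i_{b!}\) and \(i_{b*}\).

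For \(t\)-exactness I reuse the argument from the proof of \cref{thm: Langlands-Shahidi type compatibility}. By \cref{lem: !-descent of LSt and wLSt} and the \(t\)-structure version of descent, \(p_!\) generates \(\D^\LSt(\Bun_{\PGL_n})_{\geq0}\) under colimits. Let \(V'\) be a representation of \(\check{G}'=\SL_n\). It is a direct summand of \((\ind_{\SL_n}^{\GL_n}V')|_{\SL_n}\), and \cref{prop: commutativity different spectral actions} gives
\[
T_{\ind V'}\,p_!A\cong p_!\,T_{(\ind V')|_{\GL_n}}A .
\]
Hence right \(t\)-exactness of Hecke operators on \(\GL_n\) gives right \(t\)-exactness on \(\PGL_n\).

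For left \(t\)-exactness I use the adjoint, which is a Hecke operator for \(V'^\vee\). Equivalently, I check connectivity of the right adjoint via \(p^*=p^!\), which is \(t\)-exact with relative dimension \(0\) and conservative. For this I use \(p^*T_{V'}\cong T_{V'|}p^*\), by compatibility of Hecke operators with \(p^*\) as in \cite[Theorem IX.6.1]{geometrization}. Since \(p^*\) is conservative and \(t\)-exact, \(t\)-exactness of \(T_{V'}\) can be tested after applying \(p^*\), where it reduces to the \(\GL_n\) statement. This pullback route is in fact cleaner and gives both sides of \(t\)-exactness at once. The one point to check is that a representation of \(\SL_n\) inflated to \(\GL_n\)-Hecke data is legitimate, since \(\check{G}'\to\check{G}\) is the inclusion \(\SL_n\subset\GL_n\). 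Here I take a \(\GL_n\)-representation restricting to the desired one, using the direct summand argument above.

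The single-parameter statement follows the same way after localizing both sides at the parameter \(\varphi'\) and at its lifts \(\varphi\). Lifts exist since \(H^1(\Gamma,\check{D})=0\), and they are of Langlands-Shahidi type by \cref{lem: langlands-shahidi type invariant under central isogenies}. Localization commutes with \(p_!\) and \(p^*\) by \cref{prop: commutativity different spectral actions}.

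The main obstacle is the integral or modular \(t\)-exactness. I must make sure the \(\GL_n\) input from \cite{zou2025categoricallocallanglandsmathrmgln} is available for all Hecke operators, not only those of Weyl modules. The pullback-by-\(p^*\) argument is designed to need nothing beyond the \(\GL_n\) case.
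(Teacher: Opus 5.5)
Your proposal is correct. For the product decomposition, including \(i_{b!}\cong i_{b*}\), it is exactly the paper's proof: apply \cref{thm: langlands shahidi splitting descends} to \(D=\bbG_m\subset\GL_n\) and feed in \cite[Theorem 8.2]{zou2025categoricallocallanglandsmathrmgln}. You differ on \(t\)-exactness, whose descent the paper does not spell out. The argument available in the paper is the \(p_!\)-generation argument from the proof of \cref{thm: Langlands-Shahidi type compatibility}. That argument gives right \(t\)-exactness, and the other half then has to come from duals and adjoints.

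Your route instead rests on four properties of \(p^*\cong p^!\):
\begin{itemize}
\item it is \(t\)-exact for the perverse \(t\)-structure, since strata and their dimensions match and on strata it is inflation along the surjection \(G_b(E)\to G'_{b'}(E)\);
\item it is conservative;
\item it preserves the Langlands--Shahidi condition;
\item it satisfies \(p^*T_{V|_{\SL_n}}\cong T_Vp^*\) by \cite[Theorem IX.6.1]{geometrization}.
\end{itemize}
This gives both halves of \(t\)-exactness at once, with no connective version of the descent and no duality argument. The lifting step is even easier than you make it. Decompose \(V'\) under the central \(\mu_n\subset\SL_n\) and let \(\bbG_m\) act on each graded piece by a suitable power. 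This exhibits \(V'\) as the restriction of a finite-dimensional \(\GL_n\)-representation, so no induced representation is needed.

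Two asides are slightly off, though neither is load-bearing.
\begin{itemize}
\item For a single parameter \(\varphi'\) there is nothing to lift. Since \(\check{G}'\to\check{G}\) is the inclusion \(\SL_n\subset\GL_n\), the relevant \(\GL_n\)-parameter is just \(\iota\circ\varphi'\), and no cohomology vanishing is needed. That it is of Langlands--Shahidi type is \cref{lem: langlands-shahidi type invariant under central isogenies}, as you say.
\item ``\(p_!\) commutes with \(i_{b*}\)'' is not a base-change statement. To see \(i_{b'!}\cong i_{b'*}\) directly, use instead that \(p^*\cong p^!\) commutes with both \(i_{b!}\) and \(i_{b*}\) by base change and is conservative. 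This reduces the claim to the \(\GL_n\) case applied to the strata over \(b'\).
\end{itemize}
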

\begin{proof}
    This follows from \cref{thm: langlands shahidi splitting descends} and \cite[Theorem 8.2]{zou2025categoricallocallanglandsmathrmgln}.
\end{proof}
\begin{theorem}
    Fix a Whittaker datum \((U,\psi)\) for \(\PGL_n\), giving rise to a sheaf \(\mathcal{W}\in\D(\Bun_{\PGL_n})\).
    We have an equivalence of categories
    \begin{equation*}
        \QCoh(\locsyssub{W_E}{\SL_n}{\LSt})\simeq\D^\LSt(\Bun_{\PGL_n})
    \end{equation*}
    sending \(\oo\) to \(\mathcal{W}^\LSt\) and linear for the spectral action of \(\Ind\Perf(\locsyssub{W_E}{\SL_n}{\LSt})\) on both sides.
    If \(\Lambda\) is additionally a flat \(\bbZ_\ell\)-algebra or a \(\bbZ_\ell\)-field, this is also an \(t\)-exact.
\end{theorem}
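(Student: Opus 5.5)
The plan is to apply \cref{thm: Langlands-Shahidi type compatibility} to \(G=\GL_n\) and the central torus \(D=\bbG_m\) of scalar matrices, so that \(G'=G/D=\PGL_n\). The hypotheses hold as follows.

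\(H^1(\Gamma,\bbG_m)=0\) by Hilbert 90.

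\(Z(\GL_n)=\bbG_m\) is connected, so \(\pi_0(Z(G))\) is trivial and the condition \(\ell\nmid\lvert\pi_0(Z(G))\rvert\) is automatic.

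The Whittaker datum \((U,\psi)\) for \(\PGL_n\) lifts to one for \(\GL_n\) with the same \(U\), as explained in the statement of \cref{thm: Langlands-Shahidi type compatibility}.

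On the dual side, \(\check{G'}=\SL_n\subset\GL_n=\check{G}\), and \(\locsys{W_E}{\SL_n}\) is the parameter stack of \(\PGL_n\). By \cref{lem: langlands-shahidi type invariant under central isogenies}, the Langlands-Shahidi locus for \(\SL_n\) is the preimage of the one for \(\GL_n\).

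The input for \(\GL_n\) is \cite[Section 8]{zou2025categoricallocallanglandsmathrmgln}, which I expect supplies \cref{conj: langlands shahidi type conjecture} for \(\GL_n\) and every Whittaker datum. For arbitrary \(\Lambda\) this gives an equivalence \(\Ind\Perf^\qc(\locsyssub{W_E}{\GL_n}{\LSt})\simeq\D^\LSt(\Bun_{\GL_n})\) sending \(\oo\) to \(\mathcal{W}^\LSt\) and linear for the spectral action. When \(\Lambda\) is a flat \(\bbZ_\ell\)-algebra or a \(\bbZ_\ell\)-field, it should also give \(t\)-exactness (3). Since all Whittaker data of \(\GL_n\) are conjugate, the choice of datum does not matter.

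The first two points of \cref{thm: Langlands-Shahidi type compatibility} then give the equivalence for \(\PGL_n\) with properties (1) and (2) without any condition on \(\ell\). Its proof is the descent argument: \cref{lem: !-descent of LSt and wLSt} on the automorphic side, \cref{cor: kunneth for parameter stack} localized to the open Langlands-Shahidi locus on the spectral side, and compatibility of the actions of \(\D(\Bun_D)\) and \(\QCoh(\locsys{W_E}{\check{D}})\) via \cref{thm: geometric convolution action is spectral action}. For (3) under the extra hypotheses on \(\Lambda\), I would use the third point of that theorem.

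It remains to replace \(\Ind\Perf^\qc\) by \(\QCoh\). This is \cref{lem: IndPerf is QCoh for classical types t-structures match}, since \(\PGL_n\) has no exceptional factors. The same lemma identifies the good filtration \(t\)-structure with the standard one, so the \(t\)-exactness statement reads correctly.

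The step most likely to need care is checking that the \(\GL_n\) result is available in exactly this form. In particular, the \(t\)-exactness in \cite{zou2025categoricallocallanglandsmathrmgln} must be for the good filtration \(t\)-structure, and it must hold for the Langlands-Shahidi localization rather than only the weak one. The descent of \(t\)-exactness should be formal given \cref{thm: Langlands-Shahidi type compatibility}. If the reference only provides (3) for Hecke operators, I would reduce via \cref{rem: discussion tilting}: Weyl modules of \(\SL_n\) are summands of restrictions of \(\GL_n\)-representations with good filtration, exactly as in the proof of \cref{thm: Langlands-Shahidi type compatibility}.
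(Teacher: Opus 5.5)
Your proposal is correct and is essentially the paper's proof. Both apply \cref{thm: Langlands-Shahidi type compatibility} to \(D=\bbG_m\subset\GL_n\) with \(G'=\PGL_n\), feeding in the \(\GL_n\) result; the paper leaves the passage from \(\Ind\Perf^\qc\) to \(\QCoh\) via \cref{lem: IndPerf is QCoh for classical types t-structures match} implicit, and it takes the \(\GL_n\) input from \cite[Theorem 6.2.]{zou2025categoricallocallanglandsmathrmgln}, while Section 8 of that paper is what feeds the preceding splitting theorem.
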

\begin{proof}
    This follows from \cref{thm: Langlands-Shahidi type compatibility} and \cite[Theorem 6.2.]{zou2025categoricallocallanglandsmathrmgln}
\end{proof}